%% Version du 2 fevrier 2012 FR
\documentclass[10pt]{article}
\usepackage{amsfonts,amsmath,latexsym}
\usepackage[T1]{fontenc}
\usepackage[utf8]{inputenc}
\usepackage{hyperref}
\usepackage[dvips]{graphicx}
\usepackage{epsfig}
\usepackage{enumerate}
\usepackage{epsf}
\usepackage{amsthm}
\usepackage{color}
\usepackage{amssymb}
\usepackage{fancyhdr} 

\usepackage{palatino}

\newtheorem{thm}{Theorem}[section]
\newtheorem{propo}[thm]{Proposition}
\newtheorem{lemme}[thm]{Lemma}
\newtheorem{corro}[thm]{Corollary}
\newtheorem{defi}[thm]{Definition}
\newtheorem{remarque}[thm]{Remark}
\newtheorem{example}[thm]{Example}

%%%%%%%%%%%%%%%%%%%%%%%%%%%%%%%%%%%%%%%%%%%%%%%%%%%%

%\newcommand{\qed}{\nopagebreak\hspace*{\fill}
%{\vrule width6pt height6ptdepth0pt}\par}
\def \D{\mathbb D}
\def\R{\mathbb R}
\def\N{\mathbb N}

\def\C{\mathbb C}
\def\E{\mathbb E}

\def\sha{{\cal A}}

\def\shd{{\cal D}}

\def\shf{{\cal F}}

\def\shm{{\cal M}}

\def\shl{{\cal L}}

\def\shr{{\cal R}}

\def\shu{{\cal U}}

\author{
{\sc St\'ephane GOUTTE}
\thanks{CNRS and Laboratoire de Probabilit\'es et Mod\`eles Al\'eatoires (LPMA)
 UMR 7599. Universit\'e Paris 7 Diderot. E-mail:{ \tt stephane.goutte@univ-paris-diderot.fr}} 
\thanks{supported by the FUI project $R=MC^2$.}
 {\sc,}\ {\sc Nadia OUDJANE}
\thanks{EDF R\&D, Universit\'e Paris 13, LAGA. Institut Galil\'ee.  and FiME (Laboratoire de Finance des March\'es de l'Energie
(Dauphine, CREST,  EDF R\&D) www.fime-lab.org)
E-mail:{\tt  
nadia.oudjane@edf.fr}}
\ {\sc and}\ {\sc Francesco RUSSO} 
\thanks{ENSTA-ParisTech. Unit\'e de Math\'ematiques appliqu\'ees.
% and INRIA Rocquencourt Projet MathFi. 
  E-mail:{\tt  francesco.russo@ensta-paristech.fr}}.
}

\date{February 2nd  2012}
%\date{\today}
%\title{About an explicit F\"ollmer-Schweizer decomposition for processes with independent increments with related global quadratic variance error}
\title{On some expectation and derivative operators
related to integral representations of random variables
with respect to a PII process.}
%\title{Explicit forms of Kunita-Watanabe and F\"ollmer-Schweizer
% representations for  processes with independent increments}
\oddsidemargin -0.5cm 
\textwidth 6.5in 
\textheight 23cm
\topmargin -2cm 

\fancyhf{}

\cfoot{\thepage} 

% Pour placer les figures au bon endroit
\newcommand{\MBFigure}[6]{
$\left. \right.$ \\
\refstepcounter{figure}
\addcontentsline{lof}{figure}{\numberline{\thefigure}{\ignorespaces #5}}
\begin{center}
\begin{minipage}{#1cm}
\centerline{\includegraphics[width=#2cm,angle=#3]{#4}}
\begin{center}
\upshape{F\textsc{ig} \normal
\end{center}
size{\thefigure}. $-$} #5
%\upshape{F\textsc{igure} \normalsize{\thefigure} $-$} #5
\end{center}
\label{#6}
\end{minipage}
\end{center}
$\left. \right.$ \\}

%%%%%%%%%%%%%%%%%%%%%%%%%%%%%%%%%%%%%%%%%%%%%%%%%%%%%%%%%%%%%%%%%
%
%%%%%%%%%%%%%%%%%%%%%%%%%%%%%%%%%%%%%%%%%%%%%%%%%%%%%%%%%%%%%%%%%%
\begin{document}
\maketitle 
\begin{abstract} Given a process with independent increments $X$ (not necessarily a martingale)
and a large class of square integrable r.v. $H=f(X_T)$, $f$ being
the Fourier transform of a finite measure $\mu$, we provide 
explicit Kunita-Watanabe and F\"ollmer-Schweizer 
decompositions. 
The representation is expressed by means of two significant
maps: the expectation and derivative operators related
to the characteristics of $X$.
We also provide an explicit expression for the variance
 optimal error when hedging the claim $H$ with underlying process $X$. 
Those questions are motivated by finding
the solution  of the celebrated problem of global and local quadratic risk
minimization in mathematical finance.
%For a large class of vanilla contingent claims, 
%we establish an explicit F\"ollmer-Schweizer 
%decomposition when the underlying is 
%a process with independent increments (PII) 
 %or an exponential of a PII process.
%This allows to provide an efficient algorithm for solving the
%mean variance hedging problem.
%Applications to models derived from the electricity market are performed.
\end{abstract}
\medskip\noindent {\bf Key words and phrases:}  
%Variance-optimal%hedging,
 F\"ollmer-Schweizer decomposition, Kunita-Watanabe decomposition, L\'evy processes,
 Characteristic functions, Processes with independent increments,
global and local quadratic risk minimization, expectation and derivative 
operators.

\medskip\noindent  {\bf 2010  AMS-classification}: 60G51, 60H05, 60J75, 91G10
%\medskip
% \bigskip\noindent   {\bf JEL-classification}: C02, G11, G12, G13 
%%%%%%%%%%%%%%%%%%%%%%%%%%%%%%%%%%%%
\section{Introduction}

\setcounter{equation}{0}

Let $X$ be an $(\shf_t)$-special cadlag semimartingale, where 
 $(\shf_t)$ is a filtration fulfilling the usual conditions.
It admits a unique decomposition $M + A$
where $M$ is an $(\shf_t)$-local martingale and $A$ is an
$(\shf_t)$-predictable process with bounded variation. 
Given $T > 0$ and a square integrable
random variable $H$ which is $\shf_T$-measurable, we consider three specific issues of stochastic analysis that are particularly relevant in stochastic finance. 
\begin{description}
%%%%%%%%%%%%%%%%%%%%%%%%%%%%%%%%%%%%%%%%%%%%%%%%%%%%
%%%%%%%%%%%%%%%%%%%%%%%%%%%%%%%%%%%%%%%%%%%%%%%%%%%%
	\item[Kunita-Watanabe (KW) decomposition.]
	This problem consists in providing existence conditions and explicit expressions of a predictable process  $(Z_t)_{t \in [0,T]}$
and 
%$V_0 \in \R$
an $\shf_0$-measurable r.v.  such that
\begin{equation}
\label{eq:KWIntro}
H = V_0 + \int_0^T Z_s dM_s + O_T \ ,
\end{equation}
where $(O_t)$ is an $(\shf_t)$-local martingale 
such that $\langle O,M \rangle = 0$.\\
When $X=M$ is a classical Brownian motion $W$
  and $(\shf_t)$ is the associated canonical filtration,
$Z$ is provided by the celebrated {\it Clark-Ocone formula}
at least when $H$ belongs to the Malliavin-Sobolev 
type space $\D^{1,2}$.
In that case one has
\begin{equation} 
\label{EClarkOcone}
H = \E(H) + \int_0^T \E(D_s H \vert \shf_s) dW_s ,
\end{equation}
where $D H = (D_t H)_{t \in [0,T]}$ is the classical Malliavin 
derivative of $H$.

%%%%%%%%%%%%%%%%%%%%%%%%%%%%%%%%%%%%%%%%%%%%%%%%%%%%
In the last ten years a significant scientific production appeared
at the level of Malliavin calculus in relation with Poisson measures in several directions.
A trend which was particularly directed to obtaining a generalization of
Clark-Ocone formula was started by \cite{schoutens00}. In Theorem 1, the
authors obtained a chaos type decomposition of a square integrable random variable $H$ in the
Poisson space generated by a finite number of L\'evy square integrable martingales $(\eta_j)$, with respect to a
well-chosen  sequence of strongly orthogonal martingales $\gamma^{(m)}$.
This could allow to represent any $H$ as an infinite sum of stochastic integrals with respect
to the  $\gamma^{(m)}$, an infinite dimensional {\it derivative} $ {\shd}^{(m)}$ with respect to 
 $\gamma^{(m)}$ and a Malliavin-Sobolev type space $\D^{1,2}$.
A first formulation of a Clark-Ocone type formula was produced by \cite{leon02}: it consisted in representing 
square integrable random variables $H$ with respect to the $\gamma^{(m)}$ in terms of some predictable projections
of  $ {\shd}^{(m)} H$. Another class of stochastic derivative (this time) with respect to $\eta_j$
was introduced by \cite{nunno02}.
With the help of an isometry obtained in \cite{lokka}, one could deduce 
the more intrinsic (and recently widely used) Clark-Ocone type formula of the type
$$ H = \E(H) + \int_0^T \int_\R \E(D_{t,x} H \vert \shf_t) \tilde N(dt,dx)$$
where $\tilde N$ is the compensated Poisson random measure and
$(D_{t,x})$ a two-indexed derivative operator.
This formula is also stated in Theorem 12.16 of \cite{BookDiNunno}.
Theorem 4.1 of \cite{nunno} allows to provide an explicit representation
 of the process  $Z$
appearing in~\eqref{eq:KWIntro} with the help of previous operator $ D_{t,x}$.

	\item[F\"ollmer-Schweizer decomposition.] 
That decomposition is a generalization of the Kunita-Watanabe one
in the sense that square integrable random variables are represented 
with respect to $X$ instead of $M$. It consists in providing existence conditions and explicit expressions of a predictable process $\xi$ and 
an $\shf_0$-measurable square integrable r.v.  $H_0$ 
%\in \R$
 such that
\begin{equation} 
\label{BSD}
H = H_0 + \int_0^T \xi_s dX_s + L_T
\end{equation}
 where 
$L_T$ is the terminal value of an orthogonal martingale $L$ to $M$,
 the martingale part of $X$.

%%%%%%%%%%%%%%%%%%%%%%%%%%%%%%%%%%%%%%%%%%%%%%%%%%%
In the seminal paper  \cite{FS91},  the problem is treated 
for an  underlying process  $X$  with continuous paths.
In the general case, $X$ is said to satisfy the {\bf structure condition}
(SC)  if there is a predictable process $\alpha$ such that 
$A_t = \int_0^t \alpha_s d\langle M\rangle_s, t \in [0,T]$, and 
$\int_0^T \alpha_s^2 d\langle M\rangle_s < \infty $ a.s.
%In the sequel, most of contributions were produced
%in the multidimensional case. Here for simplicity we will
%formulate all the results in the one-dimensional case. \\
%BSDE
An interesting connection with the theory of backward stochastic differential
equations (BSDEs)  in the sense of \cite{pardouxpeng}, was proposed in 
\cite{S94}.  \cite{pardouxpeng} considered BSDEs driven by Brownian motion;
in \cite{S94} the Brownian motion is in fact replaced by $M$.
% in the case of Brownian motion. 
The first author who considered
a BSDE driven by a martingale was \cite{buckdahn}.
%Suppose that $V_t = \int_0^t \alpha_s d \langle M\rangle_s$.
 The BSDE problem consists
in finding a triple $(V,\xi, L)$ where
$$ V_t = H - \int_t^T \xi_s dM_s - \int_t^T \xi_s \alpha_s d \langle M\rangle_s
- (L_T - L_t), $$
and $L$ is an $(\shf_t)$-local martingale
orthogonal to $M$. 
The solution  $(V_0, \xi, L)$ of that BSDE constitutes a triplet 
 $(H_0, \xi, L)$ solving \eqref{BSD}.
The FS decomposition is motivated in mathematical finance
by looking for the solution of the so called {\it local risk minimization},
see \cite{FS91} where $H$ represents a contingent claim to hedge and
$X$ is related to the price of the underlying asset.
%In fact, this decomposition provides  the solution to the so called local risk minimization problem, see \cite{FS91}. 
In this case, $V_t$ represents the {\it hedging portfolio value}
 of the contingent claim
at time $t$, $\xi$ represents the hedging strategy and the initial capital $V_0$ constitutes in fact
 the expectation of $H$ under
the so called {\it minimal martingale measure}, see \cite{S95}.
%{\it variance optimal signed measure} (VOM). 
%%%%%%%%%%%%%%%%%%%%%%%%%%%%%%%%%%%%%%%%%%%%%%%%%%%%%%%%%%
%NADIA: j'ai commenté
%%%%%%%%%%%%%%%%%%%%%%%%%%%%%%%%%%%%%%%%%%%%%%%%%%%%%%%%%%
%In the framework of FS decomposition, a
% process which plays a significant role is the so-called
%{\bf mean variance trade-off} (MVT) process $K$. This notion is 
%inspired by the theory in
%discrete time started by \cite{Schal94}; in the continuous time case
%$K$ is defined as $K_t = \int_0^t \alpha^2_s d\langle M\rangle_s, \ t \in [0,T]$.
%\cite{S94} shows the existence  of the mean-variance hedging problem 
%if the MVT process is deterministic. 
%We remark that in the  continuous case, treated by \cite{FS91},
% no need of any condition on $K$ is required.
%%%%%%%%%%%%%%%%%%%%%%%%%%%%%%%%%%%%%%%%%%%%%%%%%%%%

%%%%%%%%%%%%%%%%%%%%%%%%%%%%%%%%%%%%%%%%%%%%%%%%%%%%
%%%%%%%%%%%%%%%%%%%%%%%%%%%%%%%%%%%%%%%%%%%%%%%%%%%%
	\item[Variance optimal hedging.] 
	This approach developed by M. Schweizer~(\cite{S94},~\cite{S95bis}) consists in  minimizing the quadratic distance
 between the hedging portfolio and the pay-off. More precisely,
 it consists in providing existence conditions and explicit expressions 
of a predictable process
  $(\varphi_t)_{t \in [0,T]}$
and 
%$V_0 \in \R$ 
an $\shf_0$-measurable square integrable r.v.  $V_0$
such that
\begin{equation}
\label{eq:VOIntro}
 (V_0, \varphi)=\textrm{Argmin}_{c,v}\ \E \big (\varepsilon(c,v)\big )^2 \,,\quad \textrm{where}\quad  \varepsilon(c,v)=H-c- \int_0^T v_s dX_s \ .
\end{equation}
The quantity $V_0$  and process  $\varphi$ represent 
 the initial capital and the optimal hedging strategy
of the contingent claim $H$.
\end{description}

When the market is complete and without arbitrage opportunities, the 
representation property \eqref{BSD} holds with $ L \equiv 0$; 
so those three decompositions (Kunita-Watanabe, F\"ollmer-Schweizer and Variance Optimal) reduce to a single representation of the random variable $H$ as a stochastic integral modulo a martingale (\textit{risk neutral}) change of measure. 
If the market model is incomplete (e.g. because of jumps or stochastic volatility in prices dynamics) then those three decompositions are in general  different and  a residual term must be added to each integral representation, e.g. $O_T$ and $L_T$ and $\varepsilon(V_0,\varphi)$. 
However, even in this incomplete market setting, a nice exception occurs if the underlying price $X$ is a martingale. Indeed, the martingale property allows  to bypass some theoretical difficulties leading again to three identical decompositions. 

Most of the articles providing quasi-explicit expressions for those
 decompositions are precisely assuming the martingale property  for the process $X$, therefore coming down to consider the Kunita-Watanabe decomposition. 
For instance in~\cite{JS00}, the authors developed an original approach to find an explicit expression for the Kunita-Watanabe decomposition of a random variable $H$ of the form $H=f(Y_T)$ where $Y$ is a reference Markov process and the price process $X$ is a martingale related to $Y$. 
Their idea is to apply Ito's formula to derive the Doob-Meyer decomposition of $\E[H \vert \mathcal{F}_t]$ and then to write the orthogonality condition between $\E[H \vert  \mathcal{F}_{\cdot}]-\int_0^{\cdot}Z_s dX_s$ and $X$. 
In~\cite{ContTankov}, the authors follow the same idea to derive the hedging
strategy minimizing the \textit{Variance Optimal hedging error} under
the (risk-neutral) pricing measure. They provide some interesting financial motivations for this martingale framework. 
Their approach also applies to a broad class of price models and to some path dependent random variables $H$. In some specific cases they obtain quasi-explicit expressions for the Variance Optimal strategy. For instance, they prove that if $X$ is the exponential of a L\'evy process, then the strategy is related to derivatives and integrals w.r.t. the L\'evy
measure of the conditional expectation $\E[H \vert \mathcal{F}_t]$. 

Unfortunately,  minimizing the quadratic hedging error 
under the pricing measure, 
%has  no clear interpretation since the
%resulting hedging strategy
 can lead to a huge quadratic error under the
objective measure. 
Moreover, the use of Ito's lemma in those approaches requires some regularity conditions on the conditional expectation $\E[H \vert \mathcal{F}_t]$: basically
  it should be once differentiable w.r.t. the time variable and twice differentiable w.r.t. the space variable with continuous partial derivatives. 

In the non-martingale framework, 
one major contribution is due to~\cite{Ka06} whose authors 
 restricted their analysis to the specific case where $X$ is the exponential of a L\'evy process  and
 $H = f(X_T)$,  $f$ being  the Fourier-Laplace transform of a complex finite measure. 
The authors obtained an explicit expression for the process $\xi$ intervening in 
\eqref{BSD}. This result was generalized to exponential of  non stationary processes in the continuous and discrete time setting in~\cite{GOR} 
and~\cite{GORDiscr}. 

%In the present paper we follow the same lines as in~\cite{Ka06} considering the case where $X$ itself is a process with independent increments.  We provide quasi explicit expression of both the Kunita-Watanabe and F\"ollmer-Schweizer decompositions and an expression for the Variance Optimal hedging strategy and the associated error. Assumptions  on the underlying process are minimal. The random variable $H$ is supposed to a function of the terminal value $X_T$ that can be expressed as a Fourier transform of a finite complex measure. 
Following this approach, the objective of the present paper is to consider the non-martingale framework and to provide quasi explicit expressions of both the Kunita-Watanabe and F\"ollmer-Schweizer decompositions when $X$ is a general process with independent increments 
and $H = f(X_T)$ is the Fourier transform of a finite measure $\mu$. Our method does not rely on Ito's formula and therefore does not require any further regularity condition on conditional expectations. 
The representation is carried by means of two significant
maps: the so-called {\it expectation and derivative operators}
 related to the characteristics of the underlying process $X$.
We also express explicitly the Variance Optimal hedging strategy and the corresponding Variance Optimal error.

The paper is organized as follows. In Section \ref{Generalities} we recall
some essential considerations related to the F\"ollmer-Schweizer decomposition
related to general special semimartingales.  In Section \ref{SPII}
we provide the framework related to processes with independent increments
and related structure conditions.   Section \ref{sec:FSPII} provides the
explicit Kunita-Watanabe and the F\"ollmer-Schweizer decompositions
under minimal assumptions. 
%We observe naturally that whenever there is no jump component or no Gaussian component, the orthogonal martingales $O$ and $L$ vanish. 
Section \ref{sec4} formulates the solution of the global minimization problem evaluating
the variance of the hedging error.
Finally, in Section \ref{ESGM}, we consider a class of examples, for which we verify that
the assumptions are fulfilled.

\section{Generalities on F\"ollmer-Schweizer decomposition and mean variance hedging}
%%%%%%%%%%%%%%%%%%%%%%%%%%%%%%%%%%%%%%%%%%%%%%%%%%%%%%%%%%%%%%%%%%%%%%%%%%%%
\label{Generalities}

\setcounter{equation}{0}

In the whole paper, $T>0$, will be a fixed terminal time and we will 
denote by $(\Omega,\mathcal{F},(\mathcal{F}_t)_{t \in [0,T]},P)$ a
 filtered probability space, fulfilling the usual conditions.
We suppose from now on $\shf_0$ to be trivial for simplicity.

%%%%%%%%%%%%%%%%%%%%%%%%%%%%%%%%%%%%%%%%%%%%%%%%%
\subsection{Optimality and F\"ollmer-Schweizer Structure Condition}
%%%%%%%%%%%%%%%%%%%%%%%%%%%%%%%%%%%%%%%%%%%%%%%%%%%
\label{sec:FMStruc}
Let $X=(X_t)_{t \in [0,T]}$ be a real-valued
 special semimartingale with canonical decomposition, $X= M+A$. For the clarity of the reader, we formulate in dimension one,
 the concepts appearing in the literature, see e.g. \cite{S94}
%introduced by F\"ollmer and Schweizer~\cite{FS91}
 in the multidimensional case.
In the sequel $\Theta$ will denote the space $L^2(M)$ of all predictable $\mathbb{R}$-valued processes $v=(v_t)_{t \in [0,T]}$ such that 
$\mathbb{E}\left[\int_0^T|v_s|^2d\left\langle M\right\rangle_s\right]<\infty\ $.
For such $v$, clearly $\int_0^t v dX, t \in [0,T]$ is well-defined;
 we denote by $G_T(\Theta)$,
 the space generated by all 
the r.v. $G_T(v)=\int_0^Tv_sdX_s$ 
 with $v=(v_t)_{t \in [0,T]}\in \Theta$.
% such that $G_T(v)=\int_0^Tv_sdX_s$.
\begin{defi}\label{Defproblem2}
The \textbf{minimization problem} we aim to study is the following: Given $H \in \mathcal{L}^2$, an admissible strategy pair $(V_0,\varphi)$ will be called \textbf{optimal} if $(c,v)=(V_0,\varphi)$ minimizes the expected squared hedging error
\begin{equation}\mathbb{E}[(H-c-G_T(v))^2]\ ,\label{problem2}\end{equation}
over all admissible strategy pairs $(c,v) \in \mathbb{R} \times \Theta$.
 $V_0$ will represent the {\bf initial capital} of the hedging portfolio 
for the contingent claim $H$ at time zero.
\end{defi}
The definition below introduces an important technical condition, see \cite{S94}.

%%%%%%%%%%%%%%%%%%%%%%%%%%%%%%%%%%%%%%%%%%%%%%%%%%%%%%%%%%%%%%%%%%%%%
%\subsubsection{Infinitely divisible}
%\begin{defi}
%A probability measure $\mu$ on $\mathcal{R}^d$ is \textbf{infinitely divisible} if, for any positive integer $n$, there is a probability measure $\mu_n$ on $\mathcal{R}^d$ such that $\mu=\mu_n^n$. Where $\mu^n$ denotes the n-fold convolution of a probability measure $\mu$ with itself, that is, $\mu^n=\underbrace{\mu*\mu*...*\mu}_{n}$.
%\end{defi}

%\begin{example}
%Gaussian and Cauchy laws on $\mathcal{R}^d$ are infinitely divisible. Other examples are Poisson, geometric, exponential and Gamma-distributions are infinitely divisible.
%\end{example}

%\begin{thm}[Theorem 9.1 of~\cite{Sa99}]
%If $(X_t)_{t \in [0,T]}$ is a process with independent increment (PII) on $\mathbb{R}^d$, then, for every t, the distribution of $X_t$ is infinitely divisible.
%\end{thm}
%%%%%%%%%%%%%%%%%%%%%%%%%%%%%%%%%%%%%%%%%%%%%%%%%%%%%%%%%%%%%%%%%%%%%%%%
\begin{defi}\label{defSC}
Let $X=(X_t)_{t \in [0,T]}$ be a real-valued special semimartingale. 
%with canonical decomposition$$X=X_0+M+A\ .$$
$X$ is said to satisfy the
 \textbf{structure condition (SC)} if there is a predictable
 $\mathbb{R}$-valued process $\alpha=(\alpha_t)_{t \in [0,T]}$ such that 
the following properties are verified.
\begin{enumerate}
	\item  $A_t=\int_0^t \alpha_s d\left\langle M\right\rangle_s\
 ,\quad \textrm{for all}\ t \in [0,T],$ so that
  $dA\ll d\left\langle M\right\rangle $. 
\item  ${\displaystyle \int_0^T \alpha^2_s d\left\langle 
M\right\rangle_s<\infty\ ,\quad P-}$a.s.
\end{enumerate}

%\begin{eqnarray}\sigma_t\widehat{\lambda}_t=\gamma_t& P-as &\forall t\in [0,T].\label{defsigma}\end{eqnarray}
%and
%\begin{eqnarray}\widehat{K}_t:=\int_0^t\widehat{\lambda}_t^*\gamma_sdB_s<\infty& P-as for& t\in [0,T].\label{defKhat}\end{eqnarray}
%This condition is linked with the AOA (ie: existence of a martingale change probability).
%We then choose an cadlag version of $\widehat{K}$ and call it the \textit{mean-variance trade-off (MVT) process} of X.
\end{defi}
\begin{defi}\label{D29}
From now on, we will denote by $K=(K_t)_{t \in [0,T]}$ the cadlag process $K_t=\int_0^t \alpha^2_s d\left\langle M\right\rangle_s\ ,\quad \textrm{for all}\ t\in [0,T]\ .$ This process will be called the \textbf{mean-variance trade-off} (MVT)  process.
\end{defi}
%\begin{remarque}
In~\cite{S94}, the process $(K_t)_{t \in [0,T]}$ is denoted by $(\widehat{K}_t)_{t \in [0,T]}$.
\subsection{F\"ollmer-Schweizer decomposition 
and variance optimal hedging}
%%%%%%%%%%%%%%%%%%%%%%%%%%%%%%%%%%%%%%%%%%%%
\label{sec:FMDecomp}
Throughout this section, as in Section~\ref{sec:FMStruc}, $X$ is supposed to be an $(\mathcal{F}_t)$-special semimartingale fulfilling the (SC) condition.
%We recall here the definition stated in Chapter~IV.3 p.~179 of~\cite{Pr92}. 
%\begin{defi}
%Two $(\mathcal{F}_t)$-martingales $M,N$ are said to be \textbf{strongly 
%orthogonal} if $MN$ is a uniformly integrable martingale.
%\end{defi}
%\begin{remarque}\label{R126}
%If $M,N$ are strongly orthogonal,  then they are (weakly) orthogonal in the sense that $\mathbb{E} [M_TN_T]=0\ .$
%\end{remarque}
%\begin{lemme}
%\label{lem:ortho}
%Let $M,N$ be two square integrable martingales.
%% such that $\left\langle M,N\right\rangle=0$, 
%Then $M$ and $N$ are strongly orthogonal if and only if
% $\left\langle M,N\right\rangle=0$.
%\end{lemme}
%\begin{proof}
%%[\textbf{Proof of Lemma \ref{lem:ortho}}]
%Let $\mathcal{S}(M)$ be the stable subspace generated by M. $\mathcal{S}(M)$ includes the space of martingales of the form
%$$
%M^f_t:=\int_0^tf(s)dM_s\ ,\quad \textrm{for all}\ t\in [0,T]\ ,
%$$
%where $f\in L^2(dM)$ is deterministic. According to Lemma~IV.3.2 of~\cite{Pr92}, it is enough to show that, for any $f\in L^2(dM)$, $g\in L^2(dN)$, $M^f$ and $N^g$ are weakly orthogonal in the sense that $\mathbb{E}[M^f_TN^g_T]=0$. This is clear since previous expectation equals
%\begin{eqnarray*}\mathbb{E}[\left\langle M^f,N^g\right\rangle_T]=
%\mathbb{E}\left (\int_0^Tfgd\left\langle M,N\right\rangle
%\right)=0\,\end{eqnarray*}
%if $\left\langle M,N\right\rangle=0$.
%This shows the converse implication.\\
%The direct implication follows from the fact that $MN$ is
%a martingale, the definition of the angle bracket and
%uniqueness of special semimartingale decomposition.
%\end{proof}
\begin{defi}\label{DefFSDecomp}
We say that a random variable $H \in \mathcal{L}^2(\Omega,\mathcal{F},P)$ admits a \textbf{F\"ollmer-Schweizer (FS) decomposition}, if 
it  can be written as
\begin{eqnarray}\label{FSdecompo}H=H_0+\int_0^T\xi_s^HdX_s+L_T^H\ ,
 \quad P-a.s.\ ,\end{eqnarray}
where $H_0\in \mathbb{R}$ is a constant, $\xi^H \in \Theta$ and $L^H=(L^H_t)_{t \in [0,T]}$ is a square integrable martingale,  
%(i.e.  such that $\mathbb{E}[L_T^2]<\infty$), 
with $\mathbb{E}[L_0^H]=0$ and  strongly orthogonal to $M$, i.e. $\langle L^H,M\rangle=0$.% ${\displaystyle \int_0^{\cdot} v dM }$ for every $v \in L^2(M)$.
\end{defi}
 The notion of strong orthogonality is treated for instance in Chapter~IV.3 p.~179 of~\cite{Pr92}.
%
%%
%
%
%We formulate for this section one basic assumption.
%%
%\begin{Hyp}\label{Hypo1}
%We assume that X satisfies (SC) and that the MVT process $K$
% is uniformly bounded in $t$ and $\omega$.
%\end{Hyp}
%
%We state a basic results in the literature about FS decomposition and mean variance hedging.
%
%
%
%%
%The first result below gives the existence and the uniqueness of the
%F\"ollmer-Schweizer decomposition for a random variable $H \in
%\mathcal{L}^2(\Omega,\mathcal{F},P)$. The second affirms that
%subspaces
% $G_T(\Theta)$ and $\{\mathcal{L}^2(\mathcal{F}_0)+G_T(\Theta)\}$ are closed subspaces of $\mathcal{L}^2$ . The last one provides existence and uniqueness of the solution of the minimization problem~(\ref{problem2}). We recall Theorem 3.4 of~\cite{MS95}.
\begin{thm}\label{ThmExistenceFS}
If X satisfies (SC) and  the MVT process $K$ is uniformly bounded
 in $t$ and $\omega$,
then we have the following.
\begin{enumerate}
\item Every random variable $H \in \mathcal{L}^2
(\Omega,\mathcal{F},P)$ admits a unique FS decomposition.
 Moreover, $H_0 \in \R$, $\xi \in \Theta$ and $L^H$ is uniquely
 determined by $H$.
 \item For every $H \in \mathcal{L}^2(\Omega,\mathcal{F},\mathcal{P})$
  there exists a unique $(c^{(H)},\varphi^{(H)}) \in \R \times \Theta$
%\mathcal{L}^2(\mathcal{F}_0) \times \Theta$
 such that
\begin{eqnarray}\label{thm128}
\mathbb{E}[(H-c^{(H)}-G_T(\varphi^{(H)}))^2]=\min_{(c,v)\in 
%\mathcal{L}^2(\mathcal{F}_0) \times \Theta}
\R \times \Theta}
\mathbb{E}[(H-c-G_T(v))^2]\ .
\end{eqnarray}
 \end{enumerate}
%\\
%If
%\begin{eqnarray*}H=H_0+\int_0^T\xi_s^HdX_s+L_T^H=H^{'}_0+\int_0^T\xi_s^{'H}dX_s+L_T^{'H},\end{eqnarray*}
%where $(H_0,\xi^H,L^H)$ and $(H^{'}_0,\xi^{'H},L^{'H})$ satisfy the conditions of the FS decomposition, then
%$$
%\left\{\begin{array}{lll}
%H_0&=&H_0^{'}\ ,\quad P-a.s.\ ,\\
%\xi^H&=&\xi^{'H}\quad \textrm{in}\   L^2(M) \ , \\
%L_T^H&=&L_T^{'H},\quad P-a.s.\ .
%\end{array}
%\right .
%$$ 
\end{thm}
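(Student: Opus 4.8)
The plan is to deduce both assertions from a single central fact: under the boundedness of the mean-variance trade-off process $K$, the linear space $G_T(\Theta)$ is \emph{closed} in $\mathcal{L}^2(\Omega,\mathcal{F},P)$ (this is the closedness result of Monat--Stricker, building on \cite{S94}). I expect the a priori estimate behind this closedness to be the main obstacle; once it is in hand, part 2 is a Hilbert-space projection and part 1 reduces to solving a linear backward equation driven by $M$.

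First I would prove closedness. Take $(v^n)\subset\Theta$ with $G_T(v^n)\to Y$ in $\mathcal{L}^2$. Writing $X=M+A$ with $A_\cdot=\int_0^\cdot\alpha_s\,d\langle M\rangle_s$, one has $G_T(v)=\int_0^T v\,dM+\int_0^T v\alpha\,d\langle M\rangle$, so the martingale part and the drift part must be disentangled. The key is an a priori estimate showing that convergence of $G_T(v^n)$ forces $(v^n)$ to be Cauchy in $\Theta=L^2(M)$, i.e. $\mathbb{E}[\int_0^T|v^n-v^m|^2\,d\langle M\rangle]\to 0$. The uniform bound on $K_t=\int_0^t\alpha_s^2\,d\langle M\rangle_s$ is exactly what controls the finite-variation contribution relative to the bracket: by the Kunita--Watanabe inequality the drift term is dominated in terms of $\|K\|_\infty$ times $\int v^2\,d\langle M\rangle$, and a stopping/Gronwall argument then yields coercivity of $v\mapsto\|G_T(v)\|_{\mathcal{L}^2}$ up to lower-order terms, hence the Cauchy property. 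Since $\Theta$ is complete and $G_T$ is continuous, $v^n\to v$ in $\Theta$ and $Y=G_T(v)\in G_T(\Theta)$.

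Granting this, part 2 is immediate. The set $\mathbb{R}+G_T(\Theta)$ is the sum of the closed subspace $G_T(\Theta)$ and the finite-dimensional space of constants, hence a closed (convex) subspace of $\mathcal{L}^2$; minimizing $\mathbb{E}[(H-c-G_T(v))^2]=\|H-(c+G_T(v))\|_{\mathcal{L}^2}^2$ amounts to projecting $H$ onto $\mathbb{R}+G_T(\Theta)$, and the projection theorem produces a unique closest element $c^{(H)}+G_T(\varphi^{(H)})$. Uniqueness of the \emph{pair} then reduces to injectivity of $(c,v)\mapsto c+G_T(v)$, which I would obtain from the structure condition: if $a+\int_0^\cdot v\,dX$ is a.s. constant, identifying its local-martingale part $\int v\,dM$ in the canonical decomposition forces $\int_0^T v^2\,d\langle M\rangle=0$, i.e. $v=0$ in $\Theta$, and then $a=0$.

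For part 1 I would reduce the FS decomposition \eqref{FSdecompo} to the linear backward equation driven by $M$ recalled in the introduction: find a square-integrable special semimartingale $V$ with $V_T=H$, martingale part $\int\xi\,dM+L$ where $L$ is a square-integrable martingale null at $0$ with $\langle L,M\rangle=0$, and drift $-\int\xi\alpha\,d\langle M\rangle$; then $H_0=V_0,\ \xi^H=\xi,\ L^H=L$ gives the decomposition. Note this is genuinely different from the $\mathcal{L}^2$-projection of part 2, since here the residual is orthogonal to the \emph{martingale} $M$, not to $G_T(\Theta)$. Existence I would get by Picard iteration: starting from the Kunita--Watanabe decomposition of the martingale $\mathbb{E}[H\mid\mathcal{F}_t]$ with respect to $M$, one defines a map whose fixed point solves the equation, the boundedness of $K$ making it a contraction once an equivalent weighted norm of the form $\mathbb{E}[\int_0^T e^{\beta K_s}(\cdots)]$ is used to absorb the driver. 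Finally, uniqueness of the FS decomposition follows by subtracting two candidates: the difference $\Delta H_0+\int_0^\cdot(\Delta\xi)\,dX+\Delta L$ vanishes at $T$, and matching the canonical local-martingale and predictable finite-variation parts together with $\langle\Delta L,M\rangle=0$ forces $\Delta\xi=0$ in $\Theta$, $\Delta L\equiv 0$ and $\Delta H_0=0$.
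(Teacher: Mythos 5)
The paper does not prove this theorem at all: it is imported verbatim from the literature (Monat--Stricker, \emph{Ann.\ Probab.}\ 1995, cited as \cite{MS95}, together with \cite{S94}), so there is no in-paper argument to compare yours against. Your proposal is essentially a reconstruction of that standard proof --- closedness of $G_T(\Theta)$ under bounded MVT, Hilbert projection for part~2, a contraction/BSDE argument for part~1 --- and the architecture is the right one. Two steps, however, are glossed in a way that would not survive being written out.

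First, the coercivity estimate. Your one-shot bound via Cauchy--Schwarz, $\bigl\|\int_0^T v\alpha\,d\langle M\rangle\bigr\|_{\mathcal{L}^2}\le \|K\|_\infty^{1/2}\|v\|_{L^2(M)}$, gives $\|G_T(v)\|_{\mathcal{L}^2}\ge (1-\|K\|_\infty^{1/2})\|v\|_{L^2(M)}$, which is coercive only when $\|K\|_\infty<1$. For a general uniform bound on $K$ one must split $[0,T]$ into finitely many intervals on each of which the increment of $K$ is small, prove the estimate on each piece, and patch by backward induction; this is the actual content of the Monat--Stricker proof and is not supplied by the phrase ``stopping/Gronwall argument.'' Second, both of your uniqueness arguments (injectivity of $(c,v)\mapsto c+G_T(v)$, and uniqueness of the FS decomposition) proceed by ``matching canonical decompositions,'' which presupposes that the relevant semimartingale vanishes \emph{as a process}; what you actually know is only that its \emph{terminal value} vanishes (e.g.\ $G_T(v-v')=c'-c$ says nothing about $\int_0^t(v-v')\,dX$ for $t<T$). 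The correct route is again the a priori estimate: boundedness of $K$ yields $\|H_0\|^2+\|\xi\|_{L^2(M)}^2+\E[(L_T^H)^2]\le C\,\|H\|_{\mathcal{L}^2}^2$, and uniqueness follows by applying this to the difference of two decompositions of $H=0$. With those two repairs your outline matches the cited proof; as it stands, the central estimate on which everything else rests is asserted rather than proved.
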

%%
%We recall Theorem 4.1 of~\cite{MS95}. 
%\begin{thm}\label{ThmGferme}
%Under Assumption~\ref{Hypo1}, the subspaces $G_T(\Theta)$ and $\{\mathcal{L}^2(\mathcal{F}_0)+G_T(\Theta)\}$ are closed subspaces of $\mathcal{L}^2$.
%\end{thm}
%%
%So we can project any random variable $H \in \mathcal{L}^2$ on $G_T(\Theta)$. By Theorem~\ref{ThmExistenceFS}, we have the uniqueness of the solution of the minimization problem~(\ref{problem2}). This is given by Theorem~4.6 of~\cite{MS95}, which is stated below.
%\begin{thm}\label{thm128}
%We suppose Assumption~\ref{Hypo1}.
%\begin{enumerate}
%\item For every $H \in \mathcal{L}^2(\Omega,\mathcal{F}, P)$ and every $c \in \mathcal{L}^2(\mathcal{F}_0)$, there exists a unique strategy $\varphi^{(c)}\in \Theta$ such that
%\begin{eqnarray}\mathbb{E}[(H-c-G_T(\varphi^{(c)}))^2]=\min_{v\in \Theta}\mathbb{E}[(H-c-G_T(v))^2]\label{problem1}\ .\end{eqnarray}
%\item For every $H \in \mathcal{L}^2(\Omega,\mathcal{F},\mathcal{P})$
%  there exists a unique $(c^{(H)},\varphi^{(H)}) \in \mathcal{L}^2(\mathcal{F}_0) \times \Theta$ such that
%\begin{eqnarray*}\mathbb{E}[(H-c^{(H)}-G_T(\varphi^{(H)}))^2]=\min_{(c,v)\in \mathcal{L}^2(\mathcal{F}_0) \times \Theta}\mathbb{E}[(H-c-G_T(v))^2]\ .\end{eqnarray*}
%\end{enumerate}
%\end{thm}
%
%Next theorem gives the explicit form of the optimal strategy
%which is valid even in the case where X satisfies the extended
% structure condition 
%(ESC). For the purpose of the present work, this will not be useful,
%see considerations following Remark \ref{remarqueKKtilde} 2.
% EMVT process $\tilde{K}$.
%
From the F\"ollmer-Schweizer decomposition follows the solution to
 the global minimization problem~(\ref{problem2}).
Next theorem gives the explicit form of the optimal strategy.
\begin{thm}\label{ThmSolutionPb1}
Suppose that X satisfies (SC), that the MVT process $K$ of X is
deterministic and  $\langle M\rangle$ is continuous. Let
  $\alpha$ be the process appearing in 
Definition~\ref{defSC} of~(SC) and  let $H\in \mathcal{L}^2$.
%with FS-decomposition (\ref{FSdecompo}).
%\begin{enumerate}
%\item 
% % $H\in \mathcal{L}^2$ 
%%If $H$ admits a FS decomposition of
%%type~(\ref{FSdecompo}), 
%For any $c \in \R$, the solution of the  minimization problem~(\ref{problem1})
% verifies  
%$\varphi^{(c)} \in \Theta$ for any $c\in \mathbb{R}$, such that
%\begin{eqnarray}\label{xisolution}
%\varphi^{(c)}_t=\xi^H_t+\frac{\alpha_t}{1+\Delta K_t}(H_{t-}-c-G_{t-}
%(\varphi^{(c)}))\ ,\quad\textrm{for \ all}\ t \in [0,T]
%\end{eqnarray}
%where the process $(H_t)_{t \in [0,T]}$ is defined by
%\begin{eqnarray}H_t:=H_0+\int_0^t\xi_s^HdX_s+L_t^H\ .\label{Vsolution}\end{eqnarray}
%and the process $\alpha$ is the process appearing in
%Definition~\ref{defSC} of~(SC).
%\item
% \begin{eqnarray}\min_{v\in
%     \Theta}\mathbb{E}[(H-c-G_T(v))^2]=\mathcal{E}(-\tilde{K}_T)\left((H_0-c)^2+\mathbb{E}[(L_0^H)^2]+\int_0^T\frac{1}{\mathcal{E}(-\tilde{K}_s)}d\left(\mathbb{E}[\left\langle
%         L^H\right\rangle_s]\right)\right), \label{corro9Sc1}\end{eqnarray}
%where, given a semimartingale $X$, $\mathcal{E}(X)$ is the
%Dol\'eans-Dade exponential of $X$, see section II.8 p.~85 of~\cite{Pr92} 
%and 
%$$\tilde{K}_t=  \int_0^t\frac{|\alpha_s|^2}
%{1+\Delta K_s} d\left \langle M\right\rangle_s = \int_0^t \frac{1}
%{1+\Delta K_s}dK_s, \ {\rm for \ all} \ t \in [0,T].$$
%Item,
% if $\left \langle M, M \right\rangle$ is continuous, 
\begin{eqnarray*}
\min_{(c,v)\in 
%\mathcal{L}^2(\mathcal{F}_0)
\R \times \Theta}\mathbb{E}[(H-c-G_T(v))^2]&=&\exp(-K_T)\mathbb{E}[(L_0^H)^2]+\mathbb{E}\left[\int_0^T\exp\{-(K_T-K_s)\}d\left\langle L^H\right\rangle_s\right]\ .\label{corro9Sc2}\end{eqnarray*}
%\end{enumerate}
\end{thm}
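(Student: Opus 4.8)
The plan is to reduce the global minimization to the F\"ollmer--Schweizer decomposition of $H$ and then to solve the resulting quadratic problem by a backward dynamic programming recursion whose coefficients generate the exponential weights $\exp(-(K_T-K_s))$.

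First I would invoke Theorem~\ref{ThmExistenceFS}. Since $K$ is deterministic and nondecreasing with $K_T<\infty$, it is uniformly bounded in $(t,\omega)$, so $H$ admits a unique FS decomposition $H=H_0+\int_0^T\xi_s^H dX_s+L_T^H$, where $L^H$ is a square integrable martingale strongly orthogonal to $M$. Writing an arbitrary competitor as $c+G_T(v)$ and using $H=H_0+G_T(\xi^H)+L_T^H$, the hedging error becomes $(H_0-c)+G_T(\xi^H-v)+L_T^H$. One is tempted to say that the unhedgeable part $L^H$ decouples from the controllable stochastic integral; this is true for the martingale part $M$ of $X$ because $\langle L^H,M\rangle=0$, but the bounded variation part $A=\int\alpha\,d\langle M\rangle$ reintroduces a coupling through the cross term $\E\big[\int_0^T(\xi^H-v)_s\alpha_s\,d\langle M\rangle_s\cdot L_T^H\big]$, which is not \emph{a priori} zero. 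Absorbing this interaction between the drift and $L^H$ is the heart of the matter.

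To deal with it I would introduce the value process of the problem started at time $t$ from current wealth $y$, namely $V_t(y)=\mathrm{essinf}_v\,\E\big[(H-y-\int_t^T v_s\,dX_s)^2\,\big|\,\shf_t\big]$, and show by the dynamic programming principle that it is quadratic in wealth, $V_t(y)=a_t(y-b_t)^2+r_t$, with terminal data $a_T=1$, $b_T=H$, $r_T=0$. The infinitesimal minimization over the current control $v_t$, performed with $dX=dM+\alpha\,d\langle M\rangle$, the orthogonality $\langle L^H,M\rangle=0$ and the FS dynamics of the target process $b$, closes the recursion: completing the square in $v_t$ contracts the penalty coefficient by a factor $dK_t=\alpha_t^2\,d\langle M\rangle_t$, so that $a$ solves the deterministic Dol\'eans equation $da_t=a_t\,dK_t$, while $r$ accumulates the residual orthogonal risk through $dr_t=-a_t\,d\langle L^H\rangle_t$. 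This is precisely the step where the coupling created by the drift is converted into the multiplicative factor $a_t$ rather than spoiling the decomposition. The hypothesis that $\langle M\rangle$, hence $K$, is continuous is exactly what turns the generic Dol\'eans--Dade exponential $\mathcal{E}(-K)$ (which would otherwise carry the jump products $\prod(1-\Delta K)$ and require $\Delta K<1$) into the ordinary exponential, giving $a_t=\exp(-(K_T-K_t))$.

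Solving the two scalar equations yields $a_0=\exp(-K_T)$ and, taking $\shf_0$-conditional expectations, $r_0=\E\big[\int_0^T\exp(-(K_T-K_s))\,d\langle L^H\rangle_s\big]$, while the irreducible initial orthogonal risk contributes $a_0\,\E[(L_0^H)^2]=\exp(-K_T)\E[(L_0^H)^2]$. It then remains to minimize $V_0(c)=a_0(c-b_0)^2+r_0+\exp(-K_T)\E[(L_0^H)^2]$ over the constant $c\in\R$: the minimizer is the optimal initial capital $c=b_0$ and the minimal value is exactly the claimed sum. I expect the main obstacle to be the rigorous justification of the quadratic ansatz and of the continuous-time recursion --- establishing the dynamic programming principle with the required $L^2$ integrability and measurable selection for the essential infimum, and carrying out the semimartingale computation that shows the drift-generated cross term is absorbed \emph{exactly} into $a_t=\exp(-(K_T-K_t))$. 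An alternative route, bypassing the essential infimum, would be to postulate the variance-optimal strategy in feedback form (the correction to the FS integrand $\xi^H$ being proportional, through $\alpha$, to the running tracking error of the portfolio) and to verify optimality directly; the exponential weights then emerge from the linear stochastic differential equation solved by the error process, and the same continuity of $\langle M\rangle$ is what makes that equation's fundamental solution an ordinary exponential.
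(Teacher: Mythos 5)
Your plan is sound and arrives at the right formula, but it is worth knowing that the paper does not prove this statement from scratch at all: its entire proof is a citation of Corollary~9 of Schweizer (reference \cite{S94} in the bibliography), plus the single observation that, because $\langle M\rangle$ --- and hence $K$ --- is continuous, the Dol\'eans-Dade exponential $\mathcal{E}(K)$ appearing in Schweizer's general formula reduces to the ordinary exponential $\exp(K)$. So what you are really doing is re-deriving the cited result. Your sketch correctly isolates the two essential points: (i) the drift $A=\int\alpha\,d\langle M\rangle$ couples the controllable integral to $L^H$, and the resolution is that completing the square contracts the quadratic coefficient by $dK_t=\alpha_t^2\,d\langle M\rangle_t$, producing $a_t=\exp(-(K_T-K_t))$ precisely because $K$ is deterministic and continuous; (ii) the residual term accumulates $a_s\,d\langle L^H\rangle_s$. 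The ``alternative route'' you mention at the end --- postulating the optimal strategy in feedback form $\varphi_t=\xi_t^H+\alpha_t\bigl(V_{t-}-c-\int_0^t\varphi_s\,dX_s\bigr)$ and verifying optimality via the linear SDE for the tracking error --- is in fact the route Schweizer actually takes, and it is the one that avoids the dynamic-programming technicalities (measurable selection, $L^2$ justification of the essential infimum) that you correctly flag as the weak point of your primary plan. What the paper's approach buys is brevity; what your approach would buy, if the verification argument were carried out in full, is a self-contained proof --- but to make it rigorous you should switch to the verification formulation rather than the essential-infimum one. One small remark: since the paper assumes $\shf_0$ trivial, $L_0^H=0$ and the first term $\exp(-K_T)\mathbb{E}[(L_0^H)^2]$ actually vanishes in the setting at hand; the statement keeps it only to match Schweizer's general formulation, and your treatment of it as the ``irreducible initial orthogonal risk'' weighted by $a_0$ is consistent with that.
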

\begin{proof} \
The result follows from Corollary~9 of~\cite{S94}. We remark that
 being $\langle M\rangle$  continuous, the Dol\'eans-Dade 
exponential of $K$, $\mathcal{E}(X)$, equals $\exp(K)$.
\end{proof}
In the sequel, we will find an explicit expression of the KW and 
 FS decomposition for a large class of square integrable random variables
$H$, when the underlying process is a process with independent increments.
 %, or is an exponential of process with independent increments. For
 %this, the first step will consist in verifying (SC) and the boundedness condition on  the MVT process, see Assumption \ref{Hypo1}.
%%%%%%%%%%%%%%%%%%%%%%%%%%%%%%%%%%%%%%%%%%%%%%%%%%%%%%
\section{Processes with independent increments (PII)}
%%%%%%%%%%%%%%%%%%%%%%%%%%%%%%%%%%%%%%%%%%%%%%%%%%%%%
%We present some useful properties related to processes with independent increments (PII), which is by assumption a semimartingale.
\label{SPII}
\setcounter{equation}{0}
This section deals with the case of processes with independent
increments.
%The preliminary part recalls some useful properties of such processes.
First, we recall some useful properties of such
processes, then, we obtain a sufficient condition  on the characteristic
function for the existence of the FS decomposition. \\
%Moreover, an explicit FS decomposition is derived.\\ 
Beyond its own theoretical interest, this work is motivated by its
 possible application to hedging derivatives related to financial
or commodity assets. 
% Indeed, one  way of modeling 
%electricity forward prices is to use arithmetic models such as the Bachelier model which was developed for standard financial assets. 
%With arithmetic models, the spot price is supposed to be a sum of Ornstein Uhlenbeck processes. 
%This approach has a significant advantage in energy markets.  
%The reason for using arithmetic models, is that the usual hedging
% instrument available on electricity markets are swap contracts 
%which give a fixed price for the delivery of electricity over a contracted time period. Hence, electricity swaps can be viewed as a strip of forwards for each hour of the delivery period. In this framework, arithmetic models have the significant advantage to yield closed pricing formula for swaps which is not the case of geometric models. 
%which play a primary role in electricty markets.  
%to produce  analitycal pricing formula for swap contracts which is not the case of geometric models. 
%However, in whole generality, an arithmetic model allows  negative
%prices which could be undesirable. Nevertheless, in the electricity
%market,  negative prices may occur because it can be more expensive for a producer to switch off some generators than to pay someone to consume the
% resulting excess of production. 
Indeed, in some specific cases it is reasonable to introduce arithmetic models 
(eg. Bachelier) in contrast to geometric models (eg. Black-Scholes model), see for instance ~\cite{Benth-Kallsen}.
% is introduced a class of arithmetic models where the positivity of spot prices is ensured, using a specific choice of increasing L\'evy process. The parameters estimation of this kind of model is studied in~\cite{Meyer-Tankov}. 
%\subsection{Generating functions}
\subsection{Generalities on PII processes}
%%%%%%%%%%%%%%%%%%%%%%%%%%%%%%%%
Let $X = (X_t)_{t \in [0,T]}$ be a stochastic process. Let $t \in [0,T]$.
\begin{defi}
%[Characteristic Function]
\label{defPhi}
\begin{enumerate}
\item The {\bf characteristic function} of (the law of) $X_t$ 
is the continuous function 
$$
\varphi_{t}:\mathbb{R} \rightarrow \mathbb{C} \quad \textrm{with}\quad \varphi_{t}(u)=\mathbb{E}[e^{iu X_t}]\ .
$$
%In the sequel, when there will be no ambiguity on the underlying process $X$, we will use the shortened notation $\varphi_t$ for $\varphi_{X_t}$. 
\item The {\bf Log-characteristic function} of (the law of) $X_t$ 
is the unique function $\Psi_t: \R \rightarrow \R$ such that
$\varphi_{t} = \exp(\Psi_t(u))$ and  $\Psi_t(0) = 0$.
%is defined by $\Psi_t(u)=Ln\left(\varphi_t(u)\right)$
%where $Ln$ is the complex logarithm defined by $Ln(z)=ln|z|+iArg(z)$, the $Arg(z)$ being chosen in $]-\Pi,\Pi]$. In particular we have
%$\varphi_t(u)=exp\left(\Psi_t(u)\right)$.
\end{enumerate}
\end{defi}
Notice that for  $u\in \R$ we have $\overline{\Psi_t(u)}=\Psi_t(-u)$.
%\end{remarque}
Since $\varphi: [0,T] \times \R   \rightarrow \C,$
is uniformly continuous and
$\varphi_t(0) = 1$, then there is a neighborhood $\shu$ of $0$ 
such that 
\begin{equation} \label{BBB}
 {\rm Re} \Psi_t(u) > 0, \forall t \in [0,T], u \in \shu. 
\end{equation}
%\subsection{PII processes}
%%%%%%%%%%%%%%%%%%%%%%%%
\begin{defi}\label{defPAI}
$X=(X_t)_{t \in [0,T]}$ is a (real) 
\textbf{process with independent increments (PII)} iff
\begin{enumerate}
\item $X$ 
%is adapted to the filtration $(\mathcal{F}_t)_{t \in [0,T]}$ and
 has cadlag paths; 
\item $X_0=0$;
\item $X_t-X_s$ is independent of $\mathcal{F}_s$ for $0 \leq s < t \leq
  T$ where $(\mathcal{F}_t)$ is the canonical filtration associated
  with $X$;
\\
moreover we will also suppose
\item $X$ is continuous in probability, i.e. 
 $X$ has no fixed time of discontinuities.
\end{enumerate}
\end{defi}
The process $X$ is said to be  \textbf{square integrable}
if for every  $t \in [0,T]$, $\E[|X_t|^2]<\infty$.

From now on, $(\shf_t)$ will always be the canonical filtration 
associated with $X$. Below, we state some elementary properties
 of the characteristic functions related to PII processes.
In the sequel, we will always suppose that $X$ is a semimartingale.
For more details about those processes the reader can consult
Chapter II of \cite{JS03}.
\begin{remarque}\label{RemarkRCher}
Let $0\leq s < t \leq T$, $u\in \R$,
\begin{enumerate}
\item $\Psi_{X_t}(u)=\Psi_{X_s}(u)+\Psi_{X_t-X_s}(u)$.
%\item $m_{X_t}(u)=m_{X_s}(u)+m_{X_t-X_s}(u)$
\item $\exp\left(iuX_t-\Psi_t(u)\right)$ is an ($\shf_t$)-martingale.
\item There is an increasing function $a: [0,T]\rightarrow \R$ and a triplet 
 $(b_t,c_t,F_t)$ called characteristics such that 
\begin{equation} 
\label{eq:LevyKhinAbs} 
\Psi_t(u)=\int_0^t \eta_s(u)\,da_s\ ,\quad \textrm{for all}\ u\in\mathbb R \ .
\end{equation}
where $\eta_s(u):=\left [iu b_s-\frac{u^2}{2}c_s+\int_{\mathbb{R}} 
(e^{iu x}-1-iu x\mathbf{1}_{|x|\leq 1})F_s(dx)\right ]$.
Indeed $b: [0,T]\rightarrow \R$,
 $c$: $[0,T]\rightarrow \R_+$ are deterministic functions and
 for any $t\in [0,T]$, $F_t$ is a positive measure
such that\\ $\int_{[0,T] \times \R} (1\wedge x^2) F_t(dx) da_t<\infty$. 
For more details we refer to the statement and the proof of Proposition II.2.9 of~\cite{JS03}. 
\item The Borel measure on
${[0,T] \times \R} $ defined by $F_t(dx) da_t$ is called {\it jump measure} 
and it is denoted by $\nu(dt,dx)$.
\item We have   $\int_{[0,T] \times \R} x^2 \nu(dt,dx) =
\E\left( \sum_{t \in [0,T]} (\Delta X_t)^2 \right) $ where 
$\Delta X_t = X_t - X_{t-}$ is the jump at time $t$ of the process $X$.
\item Suppose that $X$ is square integrable. 
Since previous sum of jumps is bounded by the square bracket at time $T$, i.e.
 $[X,X]_T$,
which is  integrable, it follows that  
$\E \left( \sum_{t \in [0,T]} (\Delta X_t)^2 \right) < \infty$. 
%(recalling that the process $X$ is supposed to have finite second order moments).    
\end{enumerate}
\end{remarque}
\begin{remarque}\label{rem:moment:b}
\begin{enumerate}
\item The process X is square integrable if and only if for every
 $t \in [0,T]$, $u\mapsto \varphi_t(u)$ is of class $C^2$.
\item By \eqref{BBB}, $X$ is square integrable if and only if 
  $u\mapsto \Psi_t(u)$ is of class $C^2$, 
$t \in [0,T], u \in \shu$.
\item If $X$ is square integrable, the chain rule derivation implies 
\begin{eqnarray}
\mathbb{E}[X_t]&=&-i\Psi_t^{'}(0)\ , \quad
\mathbb{E}[X_t-X_s]=-i(\Psi_t^{'}(0)-\Psi_s^{'}(0)),\label{EspXts} \\
%\label{EspXts}\\
Var(X_t)&=&-\Psi^{''}_t(0)\ , \quad
\label{VarX}\\
Var(X_t-X_s)&=&-[\Psi^{''}_t(0)-\Psi^{''}_s(0)]  \ .\label{VarXts}
%\label{VarXts}\ . \label{VarXts}
\end{eqnarray}
\end{enumerate}
\end{remarque} 
%\begin{remarque} \label{RQEFS}
%\begin{enumerate}
%\item Suppose that $X$ is square integrable.
%Then $u \mapsto \Psi_t(u)$ is differentiable  since $u \mapsto \varphi_t(u)$ is.
%\item $t\mapsto \Psi^{'}_t(u)$ is continuous for every $u \in \R$
%and therefore bounded on $[0,T]$.
%In fact, first $t \mapsto X_t$ is continuous in probability.
%Since $M_t = X_t - \Psi_t'(0)  $ is a square integrable martingale and
% $t \mapsto \Psi^{'}_t(0)$  is bounded, then
%the family $(E(X_t^2))$ is bounded and so $(X_t)$  is uniformly
%integrable.
%So $t \mapsto \varphi_t'(u)$ is continuous and the result follows
%
%because
%$\varphi^{'}_t(u)=\varphi_t(u)\Psi^{'}_t(u)$ and the fact that $\varphi_t(u)\neq 0$, $\forall u \in \R$.
%\item $t\mapsto \Psi^{''}_t(0)$ is continuous.
%In fact, again it is enough to prove that $t \mapsto \varphi_t''(0)$
%is continuous.
%This follows if we prove that $(M_t)$ is continuous
%in ${\cal L}^2$. This is true because $M$ is continuous in probability
%and for any $N > 0$, $ t \in [0,T]$, Chebyshev implies that 
%$ P\{  M^2_t > N \} \le \frac{{\rm Var} (X_t)}{N} 
%\le  \frac{{\rm Var} (X_T)}{N}, $
%and so the family $(M_t^2)$ is again uniformly integrable.
%\item $\Psi,\Psi^{'}$ and $\Psi^{''}$ are measurable with respect to $(t,u)\in [0,T]\times \R$. 
% $d\Psi^{''}_t(0)$, $\forall u \in \R$ this is easily verified by using the L\'evy Khintchine representation~\eqref{eq:LevyKhinAbs} and the fact that their are all a.c. w.r.t. to $a_t$.  
%\end{enumerate}
%\end{remarque}
\begin{remarque} \label{RemarkRCher4}
%MODIFIER PAR LA SUITE LA REFERENCE AU POINT 4) ANCIEN
%\item 
Suppose that $X$ is a square integrable PII process.
We observe that it is possible to permute integral
 and derivative in the expression
 \eqref{eq:LevyKhinAbs}.
In fact consider $t\in [0,T]$.
We need to show that
\begin{equation} \label{DSSS}
 \frac{d}{du} 
 \int_{[0,t] \times \R} \nu(ds,dx) g(s,x;u) =
\int_{[0,t] \times \R} \nu(ds,dx) 
\frac{\partial g}{\partial u} (s,x;u),
\end{equation}
where $g(s,x;u) =  ix(e^{iu x}-\mathbf{1}_{|x|\leq 1})$.
We observe that 
\begin{eqnarray*}
\vert g(s,x;u) \vert^2 &=& \vert e^{iux}-\mathbf{1}_{\vert x\vert \leq 1}\vert^2=\vert \cos ux -\mathbf{1}_{\vert x\vert \leq 1}\vert^2+\vert \sin ux\vert^2=\mathbf{1}_{\vert x\vert>1}+4\mathbf{1}_{\vert x\vert \leq 1} (\sin \frac{ux}{2})^2 \\
& \leq & 4 (\frac{u^2}{2}\vee 1)(x^2\wedge 1)\ .
\end{eqnarray*}
Hence, we obtain that for any real interval $[a,b]$ and  for any $u\in [a,b]$, 
 \begin{eqnarray*}
\left \vert \frac{\partial g(s,x;u)}{\partial u} \right \vert &=&
\left \vert \frac{\partial }{\partial u}(e^{iu x}-1-iux\mathbf{1}_{|x|\leq 1})\right \vert 
=
 \left \vert ix (e^{iux}-\mathbf{1}_{\vert x\vert \leq 1})\right \vert 
\leq 
2 x^2 (\vert u\vert \vee 1 ) \\
& \le &   2b x^2 =: \gamma(s,x) \ .
\end{eqnarray*}
Consequently by finite increments theorem, Remark \ref{RemarkRCher}
5)  it follows that
$\int_{[0,T] \times \R} \nu(ds,dx) \gamma(s,x) < \infty.$
By the definition of derivative and Lebesgue dominated convergence 
theorem the result \eqref{DSSS} follows.
So
% it gives for every $u\in\R$,
\begin{equation} \label{3.1bis} 
\Psi_t'(u)=i\int_0^t b_sda_s-u\int_0^t c_sda_s+\int_0^t 
\left(\int_{\mathbb{R}} ix(e^{iu x}-\mathbf{1}_{|x|\leq 1})F_s(dx)
\right)da_s\ ,\quad \textrm{for all}\ u\in\mathbb R \ .
\end{equation}
Moreover, since
$$
\left \vert \frac{\partial}{\partial u}\left(ix(e^{iux}-\mathbf{1}_{\vert x\vert \leq 1})\right)\right \vert=\left\vert x^2e^{iux}\right\vert\leq x^2
$$
%and since $(X_t)$ is square integrable then by Leibniz theorem for Lebesgue integral differentiation, we can write
we obtain similarly
\begin{equation} \label{EXi}
\Psi''_t(u)=-\int_0^tc_sda_s-\int_{[0,T]\times \R}x^2e^{iux}F_s(dx)da_s=
-\int_0^t \xi_s(u) da_s \ ,
%\left(c_s+\int_\R x^2e^{iux}F_s(dx)\right)da_s,
\end{equation}
where $\xi_s(u) = c_s+\int_\R x^2e^{iux}F_s(dx)$.
In particular, for every $u\in \R$, $t\mapsto \Psi'_t(u)$ and $t\mapsto \Psi''_t(u)$ are absolutely continuous with respect to $da_s$.
%\end{enumerate}
%In fact 1., 2. follow by Remark \ref{RC22}. 3. is an easy exercise.
\end{remarque}
%For more transparency
%We introduce the function  $\nu:[0,T]\times \R^2\rightarrow \R$ 
%%and $m:[0,T]\times \R \rightarrow \R$ defined by
%\begin{equation}\label{FEC2bis}
%\nu_t(u,v)=\Psi_t(u+v)-\Psi_t(u)-\Psi_t(v)
%%\\
%%m_t(u)&=&\nu_t(u,-u)=-2Re(\Psi_t(u))\nonumber
%\end{equation}
\begin{remarque} \label{RQEFS}
Suppose that $X$ is square integrable. A consequence 
of Remark \ref{RemarkRCher4}
 is the following.
\begin{enumerate}
%Then $u \mapsto \Psi_t(u)$ is differentiable  since $u \mapsto \varphi_t(u)$ is.
\item $t\mapsto \Psi^{'}_t(u)$ is continuous  for every $u \in \R$
and therefore bounded on $[0,T]$.
%In fact, first $t \mapsto X_t$ is continuous in probability.
%Since $M_t = X_t - \Psi_t'(0)  $ is a square integrable martingale and
% $t \mapsto \Psi^{'}_t(0)$  is bounded, then
%the family $(E(X_t^2))$ is bounded and so $(X_t)$  is uniformly
%integrable.
%So $t \mapsto \varphi_t'(u)$ is continuous and the result follows
%
%because
%$\varphi^{'}_t(u)=\varphi_t(u)\Psi^{'}_t(u)$ and the fact that $\varphi_t(u)\neq 0$, $\forall u \in \R$.
\item $t\mapsto \Psi^{''}_t(0)$ is continuous.
\end{enumerate}
%In fact, again it is enough to prove that $t \mapsto \varphi_t''(0)$
%is continuous.
%This follows if we prove that $(M_t)$ is continuous
%in ${\cal L}^2$. This is true because $M$ is continuous in probability
%and for any $N > 0$, $ t \in [0,T]$, Chebyshev implies that 
%$ P\{  M^2_t > N \} \le \frac{{\rm Var} (X_t)}{N} 
%\le  \frac{{\rm Var} (X_T)}{N}, $
%and so the family $(M_t^2)$ is again uniformly integrable.
%\item $\Psi,\Psi^{'}$ and $\Psi^{''}$ are measurable with respect to $(t,u)\in [0,T]\times \R$. 
% $d\Psi^{''}_t(0)$, $\forall u \in \R$ this is easily verified by using the L\'evy Khintchine representation~\eqref{eq:LevyKhinAbs} and the fact that their are all a.c. w.r.t. to $a_t$.  
%\end{enumerate}
\end{remarque}

\subsection{Structure condition for PII } \label{SSCPII}
Let $X=(X_t)_{t \in [0,T]}$ be a real-valued semimartingale with
independent  increments and $X_0 = 0$.
%The differentiability properties of $u \mapsto \varphi_t(u)$ are equivalent 
%to those of $u \mapsto \Psi_t(u)$.
 From now on, $X$ will be supposed to be \textbf{square integrable}.
 %i.e.  $t \in [0,T]$, $\E[|X_t|^2]<\infty$.

\begin{propo}\label{prop:SCPII}

\begin{enumerate}
\item $X$ is a special semimartingale with decomposition
$X = M + A$ with the following properties:
$\left\langle M\right\rangle_t=-\Psi^{''}_t(0)$ and $ A_t=-i\Psi^{'}_t(0)$. In particular $t \mapsto  -\Psi^{''}_t(0)$ is increasing and therefore
of bounded variation.
\item $X$ satisfies condition~(SC) of Definition~\ref{defSC} if and only if 
\begin{equation}
\label{SCPII}
 d\Psi^{'}_t(0)\ll d\Psi^{''}_t(0) \quad \textrm{and}\quad
{\displaystyle \int_0^T\left|\frac{d\Psi^{'}_s}{d\Psi^{''}_s}(0)\right|^2|d\Psi_s^{''}(0)|<\infty}\ .
\end{equation}
In that case
\begin{equation}\label{AMPII}
A_t=\int_0^t \alpha_sd\left\langle M\right\rangle_s \quad  \textrm{with}\quad \alpha_t=i\frac{d\Psi^{'}_t(0)}{d\Psi^{''}_t(0)}\quad \textrm{for all}\ t\in [0,T].
\end{equation}
\item Under condition~(\ref{SCPII}), FS decomposition exists (and it is unique)
 for every square integrable random variable. 
\end{enumerate}
\end{propo}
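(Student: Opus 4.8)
The plan is to handle the three assertions in order, since each relies on the previous one. The driving force behind Part 1 is the independent increments property, which collapses every conditional expectation given $\shf_s$ into a deterministic shift; Part 2 is then a purely algebraic translation of condition (SC) of Definition~\ref{defSC} using the explicit forms of $A$ and $\langle M\rangle$ obtained in Part 1; and Part 3 is an application of Theorem~\ref{ThmExistenceFS} once one notices that the mean-variance trade-off process is here deterministic and bounded.

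For Part 1 I would propose the candidate decomposition $A_t := -i\Psi'_t(0)$ and $M_t := X_t - A_t = X_t + i\Psi'_t(0)$. To check that $M$ is a martingale, fix $0 \le s < t \le T$ and use that $X_t - X_s$ is independent of $\shf_s$ together with \eqref{EspXts}: this yields $\E[X_t\mid\shf_s] = X_s - i(\Psi'_t(0) - \Psi'_s(0))$, whence $\E[M_t \mid \shf_s] = M_s$; square integrability of $X$ makes $M$ a square integrable martingale. The process $A$ is deterministic, hence predictable, and its bounded variation follows by evaluating \eqref{3.1bis} at $u = 0$, which gives $A_t = \int_0^t b_s\,da_s + \int_0^t\big(\int_{|x|>1} x\,F_s(dx)\big)da_s$, the large-jump part being controlled by the finite second moment $\int_{[0,T]\times\R} x^2\nu(ds,dx)$ of items 5) and 6) of Remark~\ref{RemarkRCher}. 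This exhibits $X = M + A$ as a special semimartingale, and by uniqueness of the canonical decomposition it is the canonical one. To identify $\langle M\rangle$ I would show that $M_t^2 + \Psi''_t(0)$ is a martingale: expanding $M_t^2 = (M_t - M_s + M_s)^2$, the cross term vanishes by independence and the increment term equals $\mathrm{Var}(X_t - X_s) = -[\Psi''_t(0) - \Psi''_s(0)]$ by \eqref{VarXts}, giving exactly $\langle M\rangle_t = -\Psi''_t(0)$. Monotonicity of $-\Psi''_\cdot(0)$ is read off \eqref{EXi} at $u = 0$, where both $\int_0^t c_s\,da_s$ and $\int_0^t\int_\R x^2 F_s(dx)\,da_s$ are nondecreasing.

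For Part 2, using $A_t = -i\Psi'_t(0)$ and $\langle M\rangle_t = -\Psi''_t(0)$ from Part 1, condition (SC) reads $dA \ll d\langle M\rangle$, that is $d\Psi'_t(0) \ll d\Psi''_t(0)$, with Radon--Nikodym density $\alpha_t = dA_t/d\langle M\rangle_t = i\,d\Psi'_t(0)/d\Psi''_t(0)$, so that the clause $\int_0^T \alpha_s^2\,d\langle M\rangle_s < \infty$ becomes verbatim the second requirement in \eqref{SCPII}; this simultaneously yields \eqref{AMPII}. For Part 3 I would invoke Theorem~\ref{ThmExistenceFS}: beyond (SC), already secured in Part 2, the only point to verify is that the MVT process $K_t = \int_0^t \alpha_s^2\,d\langle M\rangle_s$ is uniformly bounded in $(t,\omega)$. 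But $\alpha$ and $\langle M\rangle$ are deterministic, so $K$ is a deterministic nondecreasing function, and the finiteness in \eqref{SCPII} says precisely $K_T < \infty$; hence $K_t \le K_T < \infty$ for all $t$, giving the required uniform bound, and Theorem~\ref{ThmExistenceFS} delivers existence and uniqueness of the FS decomposition for every square integrable $H$.

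I expect the main obstacle to be the rigorous justification of the bounded variation (equivalently, integrability) of $A$ in Part 1: one must genuinely exploit square integrability to control the large-jump term $\int_{|x|>1} x\,F_s(dx)$, since it is the finite-second-moment bound $\int x^2\,\nu < \infty$ that makes $\int_0^T\int_{|x|>1}|x|\,F_s(dx)\,da_s$ finite via $|x| \le x^2$ on $\{|x|>1\}$. By contrast, the martingale identities are clean consequences of independent increments, and Parts 2 and 3 reduce to bookkeeping once Part 1 is established.
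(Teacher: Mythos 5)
Your proof is correct and follows essentially the same route as the paper: the same candidate decomposition $M_t = X_t + i\Psi'_t(0)$, $A_t=-i\Psi'_t(0)$, the same martingale verifications for $M$ and $M_t^2 + \Psi''_t(0)$ via independent increments and \eqref{EspXts}--\eqref{VarXts}, and the same reduction of Parts 2 and 3 to Definition~\ref{defSC} and Theorem~\ref{ThmExistenceFS} through the deterministic, bounded MVT process. The only (harmless) divergence is that you justify the bounded variation of $A$ explicitly from \eqref{3.1bis} at $u=0$ together with the second-moment bound of Remark~\ref{RemarkRCher}, whereas the paper simply invokes the semimartingale property of $X$; your version is, if anything, more self-contained.
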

%%%%%%%%%
%Modif Nadia
%%%%%%%%%%
%In the sequel, we will provide an explicit decomposition for a class of contingent claims, under condition~(\ref{SCPII}). \\
Before going into the proof of the above proposition, let us derive one implication on the validity of the (SC) in the L\'evy case.  
%we refer the reader to~\cite{Pr92},~\cite{Sa99} and~\cite{JS03}. 
Let $X=(X_t)_{t \in [0,T]}$ be a real-valued L\'evy process, with $X_0=0$. 
We assume that $\mathbb{E}[|X_T|^2]<\infty$.
% and we do not consider the trivial %case where $X_1$ is deterministic. 
%
%\begin{remarque}\label{rem:moment:levy}
\begin{enumerate}
\item Since $X=(X_t)_{t \in [0,T]}$ is a L\'evy process then $\Psi_t(u)=t\Psi_1(u)$. In the sequel, we will use the shortened notation  $\Psi:=\Psi_1$. 
\item $\Psi$ is a function of class $C^2$ and $\Psi^{''}(0)=Var(X_1)$ which
is strictly positive if $X_1$ is non deterministic.
\end{enumerate}
%\end{remarque}
Then by application of Proposition~\ref{prop:SCPII}, we get the following result.
\begin{corro}\label{cor:SCLevy}
Let $X=M+A$ be the canonical decomposition of the L\'evy process $X$.
 Then for all $t\in[0,T]$, 
\begin{eqnarray}\left\langle M\right\rangle_t=-t\Psi^{''}(0) \quad \textrm{and}\quad A_t=-it\Psi^{'}(0)\ .\label{MALevy}\end{eqnarray}
If $\Psi^{''}(0)\neq 0$ then  $X$ satisfies condition~(SC) of Definition~\ref{defSC} with 
\begin{equation}
A_t=\int_0^t \alpha d\left\langle M\right\rangle_s \quad  \textrm{with}\quad \alpha=i \frac{\Psi^{'}(0)}{ \Psi^{''}(0)}\quad \textrm{for all}\ t\in [0,T]\ .\label{AMPIIS}
\end{equation}
Hence, FS decomposition exists for every square integrable random variable. If $\Psi^{''}(0)=0$ then $(X_t)$ verifies condition (SC) if and only if $X_t \equiv 0$.
\end{corro}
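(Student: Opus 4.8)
The plan is to deduce Corollary \ref{cor:SCLevy} directly from Proposition \ref{prop:SCPII}, exploiting the scaling relation $\Psi_t(u) = t\Psi(u)$ that is special to the L\'evy case. First I would record the canonical decomposition. By part 1 of Proposition \ref{prop:SCPII}, $\langle M\rangle_t = -\Psi_t''(0)$ and $A_t = -i\Psi_t'(0)$. Differentiating $\Psi_t(u) = t\Psi(u)$ twice in $u$ and evaluating at $0$ gives $\Psi_t'(0) = t\Psi'(0)$ and $\Psi_t''(0) = t\Psi''(0)$, so \eqref{MALevy} follows immediately: $\langle M\rangle_t = -t\Psi''(0)$ and $A_t = -it\Psi'(0)$. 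Since $\Psi''(0) = -Var(X_1) \le 0$, the quantity $-t\Psi''(0)$ is indeed increasing, consistent with the general statement.

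Next I would verify the structure condition via criterion \eqref{SCPII}. Here both $\Psi_t'(0)$ and $\Psi_t''(0)$ are linear in $t$, hence absolutely continuous with $d\Psi_t'(0) = \Psi'(0)\,dt$ and $d\Psi_t''(0) = \Psi''(0)\,dt$. When $\Psi''(0) \neq 0$, the absolute continuity $d\Psi_t'(0) \ll d\Psi_t''(0)$ holds trivially because both measures are constant multiples of Lebesgue measure, and the Radon-Nikodym derivative is the constant $\frac{d\Psi_s'}{d\Psi_s''}(0) = \frac{\Psi'(0)}{\Psi''(0)}$. The integrability condition in \eqref{SCPII} then reads $\int_0^T |\Psi'(0)/\Psi''(0)|^2\,|\Psi''(0)|\,ds = T\,|\Psi'(0)|^2/|\Psi''(0)| < \infty$, which is plainly finite. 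Formula \eqref{AMPII} then specializes, with $\alpha_t = i\,d\Psi_t'(0)/d\Psi_t''(0) = i\,\Psi'(0)/\Psi''(0)$, to the constant process asserted in \eqref{AMPIIS}, and the existence of the FS decomposition follows from part 3 of Proposition \ref{prop:SCPII}.

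The only remaining point is the degenerate case $\Psi''(0) = 0$. Since $\Psi''(0) = -Var(X_1)$, this means $Var(X_1) = 0$, so $X_1$ is a.s. constant, and by $X_0 = 0$ together with stationary independent increments each $X_t$ is deterministic; being continuous in probability and cadlag with $X_0 = 0$, one checks $X_t \equiv \mathbb{E}[X_t] = -it\Psi'(0)$. The claim is that (SC) fails unless $X \equiv 0$. The point is that $\langle M\rangle_t = -t\Psi''(0) = 0$, so the martingale part $M$ vanishes; the decomposition $X = M + A$ forces $X = A$ to be purely of bounded variation. If additionally $X$ satisfies (SC), then $A_t = \int_0^t \alpha_s\,d\langle M\rangle_s = 0$ since $d\langle M\rangle \equiv 0$, whence $X_t = A_t \equiv 0$. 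Conversely $X \equiv 0$ trivially satisfies (SC). This gives the stated equivalence.

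The main obstacle, to the extent there is one, lies in this last case: one must be careful to argue that $\langle M\rangle \equiv 0$ genuinely forces $M \equiv 0$ and hence $X = A$, and then that the (SC) representation $A = \int \alpha\,d\langle M\rangle$ collapses to zero. The analytic content (scaling, linearity in $t$, absolute continuity) is routine once $\Psi_t(u) = t\Psi(u)$ is in hand; the conceptual subtlety is purely in interpreting the vanishing of the mean-variance trade-off. I would therefore structure the proof as the two bulleted preliminary observations already displayed in the excerpt, followed by a short invocation of Proposition \ref{prop:SCPII} for the non-degenerate case and the elementary argument above for the degenerate one.
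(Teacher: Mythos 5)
Your proof is correct and follows essentially the same route as the paper: the corollary is obtained by specializing Proposition~\ref{prop:SCPII} to the L\'evy scaling $\Psi_t(u)=t\Psi(u)$, which makes $\Psi_t'(0)$ and $\Psi_t''(0)$ linear in $t$ and turns the Radon--Nikodym derivative in \eqref{SCPII} into the constant $\Psi'(0)/\Psi''(0)$. Your explicit handling of the degenerate case $\Psi''(0)=0$ (via $\langle M\rangle\equiv 0\Rightarrow M\equiv 0$, hence $X=A=\int\alpha\,d\langle M\rangle\equiv 0$ under (SC)) supplies a detail the paper leaves implicit, where it would instead follow from Proposition~\ref{PSCPIII}; the two arguments are equivalent.
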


%\begin{proof}[{\bf Proof of Proposition \ref{LSCP1}}]
\begin{proof}[{\bf Proof of Proposition~\ref{prop:SCPII}}]
%[\textbf{Proof of Proposition \ref{prop:SCPII}}]
\begin{enumerate}
\item
Let us first determine $A$ and $M$ in terms of the log-characteristic
 function of $X$. Using~(\ref{EspXts}) of Remark~\ref{rem:moment:b}, we get
$
\mathbb{E}[X_t|\mathcal{F}_s]  =  \mathbb{E}[X_t-X_s+X_s\ |\
 \mathcal{F}_s] = -i\Psi^{'}_t(0)+i\Psi^{'}_s(0)+X_s$, 
 then 
$\mathbb{E}[X_t+i\Psi^{'}_t(0)|\mathcal{F}_s] =X_s +i\Psi^{'}_s(0)$. 
 Hence, $(X_t+i\Psi^{'}_t(0))$ is a martingale and the canonical decomposition of $X$ follows 
$
X_t=\underbrace{X_t+i\Psi^{'}_t(0)}_{M_t}\underbrace{-i\Psi^{'}_t(0)}_{A_t} \ ,
$
where $M$ is a local martingale and $A$ is a locally bounded variation process thanks to 
%since $t \rightarrow \Psi^{'}_t(0)$ has locally bounded variation, 
the  semimartingale property of $X$.  
%At this moment, we aim now to find minimal suffcient condition to express $A_t$ in terms of $\left\langle M\right\rangle_t$.\\
%By definition of $\langle M\rangle $, there exists a predictable process $N_t$, such that $M_t^2=N_t+\left\langle M\right\rangle_t\ ,$ so $\ .$
%
Let us now determine  $\langle M\rangle $, in terms of the
log-characteristic function of $X$. 
Using~(\ref{EspXts}) and~(\ref{VarXts}) of Remark~\ref{rem:moment:b}, yields 
\begin{eqnarray*}
%M_t^2&=&[X_t+i\Psi^{'}_t(0)]^2\ ,\\\\
%
 \mathbb{E}[M^2_t|\mathcal{F}_s] & = &
 \mathbb{E}[(X_t+i\Psi^{'}_t(0))^2|
\mathcal{F}_s] 
 %   =  \mathbb{E}[(X_s+i\Psi^{'}_s(0)+
%X_t-X_s+i\Psi^{'}_t(0)-i\Psi^{'}_s(0))^2|\mathcal{F}_s]\ ,\\\\
    =  \mathbb{E}[(M_s+X_t-X_s+i(\Psi^{'}_t(0)-\Psi^{'}_s(0)))^2|
\mathcal{F}_s]\ , \\
%Using~(\ref{EspXts}) and~(\ref{VarXts}) of Remark~\ref{rem:moment:b}, yields 
% & = & \mathbb{E}[(M_s-\mathbb{E}[X_t-X_s]+X_t-X_s)^2]\ ,\\
 & = & M_s^2+Var(X_t-X_s)
= M_s^2-\Psi^{''}_t(0)+\Psi^{''}_s(0) \ .
\end{eqnarray*}
Hence, $(M^2_t+\Psi^{''}_t(0))$ is a $(\mathcal{F}_t)$-martingale,
and point 1. is established. 
%On the other hand
%A_t=\int_0^t \alpha_sd\left\langle M\right\rangle_s \quad 
% \textrm{with}\quad \alpha_t=i\frac{d_t \Psi^{'}_t(0)}{d_t
%   \Psi^{''}_t(0)}
%\quad \textrm{for}\ t\in [0,T]\ .
%$
\item is a consequence of point 1. and of Definition~\ref{defSC}.
On the other hand
$A_t=\int_0^t \alpha_sd\left\langle M\right\rangle_s \quad 
 \textrm{with}\quad \alpha_t=i\frac{d \Psi^{'}_t(0)}{d_t
   \Psi^{''}_t(0)}
\quad \textrm{for}\ t\in [0,T]\ .
$
\item follows from  Theorem \ref{ThmExistenceFS}. In fact 
$K_T =  
 \displaystyle \int_0^T \left(\frac{d\Psi^{'}_s}{d \Psi^{''}_s}(0)
\right)^2d(-\Psi_s^{''}(0))$
is deterministic and in particular K is uniformly bounded.
\end{enumerate}
\end{proof}
%From now on, we suppose $(X)_{t\in [0,T]}$ to be square integrable.
Condition (SC) implies a significant necessary condition.
\begin{propo}\label{PSCPIII}
If $X$ satisfies condition (SC), then one of the two following properties hold.
\begin{enumerate}
\item $X$ has no deterministic increments.
%\item If $I$ is an interval $[a,b]$ for which $X_b-X_a$ is deterministic then $X_u=X_a$, $\forall u \in [a,b]$.
\item If $X_b-X_a$ is deterministic then $X_u=X_a$, $\forall u \in [a,b]$.
\end{enumerate}
\end{propo}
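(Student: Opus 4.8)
The plan is to establish statement (2) directly and then observe that the claimed dichotomy follows at once: either no increment of $X$ is deterministic, which is statement (1), or some increment is, in which case (2) tells us it is the trivial (constant) one. The starting point is to translate ``deterministic increment'' into the language of the characteristics. For a square integrable random variable, being almost surely constant is equivalent to having vanishing variance, so by \eqref{VarXts} the increment $X_b - X_a$ (with $a<b$) is deterministic if and only if $\mathrm{Var}(X_b - X_a) = -[\Psi^{''}_b(0) - \Psi^{''}_a(0)] = 0$, i.e. $\Psi^{''}_b(0) = \Psi^{''}_a(0)$. Assume from now on that this holds.

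First I would propagate the equality across the whole interval. By Proposition~\ref{prop:SCPII}(1), the map $t \mapsto -\Psi^{''}_t(0) = \langle M\rangle_t$ is non-decreasing; since it takes the same value at $a$ and $b$, it must be constant on $[a,b]$, so $\Psi^{''}_u(0) = \Psi^{''}_a(0)$ for every $u \in [a,b]$. Using \eqref{VarXts} again, $\mathrm{Var}(X_u - X_a) = 0$ for every $u \in [a,b]$, that is, each increment $X_u - X_a$ is deterministic and therefore equal to its expectation.

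The essential step, and the only place where (SC) is genuinely needed, is to show that this expectation vanishes. Since $\langle M\rangle_t = -\Psi^{''}_t(0)$ is constant on $[a,b]$, and under (SC) we may write $A_t = \int_0^t \alpha_s\, d\langle M\rangle_s$ as in \eqref{AMPII}, the drift $A$ is itself constant on $[a,b]$. As $A_t = -i\Psi^{'}_t(0) = \mathbb{E}[X_t]$, this gives $\mathbb{E}[X_u - X_a] = 0$ for all $u \in [a,b]$ by \eqref{EspXts}. Combining zero variance with zero mean yields $X_u - X_a = 0$ almost surely for each fixed $u \in [a,b]$.

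Finally I would upgrade this ``for each $u$, almost surely'' statement into a single almost sure statement holding for all $u$ simultaneously, using the cadlag regularity of $X$: the equality $X_u = X_a$ holds off a null set for every $u$ in the countable set $([a,b]\cap\mathbb{Q})\cup\{b\}$, hence on a single event of full probability, and right-continuity of the paths then propagates it to every $u \in [a,b]$. This proves (2). The main conceptual point is precisely the drift-killing step: monotonicity of the variance alone freezes $\langle M\rangle$ on $[a,b]$ but says nothing about $A$, and without (SC) one could let $A = -i\Psi^{'}(0)$ vary over $[a,b]$, producing a genuinely deterministic but non-constant increment; the absolute continuity $dA \ll d\langle M\rangle$ built into (SC) is exactly what rules this out and forces a deterministic increment to be a null increment.
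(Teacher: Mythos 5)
Your proof is correct and follows essentially the same route as the paper's: zero variance of $X_b-X_a$ plus monotonicity of $-\Psi''_t(0)=\langle M\rangle_t$ freezes the variance on all of $[a,b]$, and the absolute continuity $dA\ll d\langle M\rangle$ from (SC) then forces $\Psi'_t(0)$ (hence the mean of the increment) to be constant there, so the increment vanishes. Your final remark on upgrading the pointwise-in-$u$ almost sure statement via cadlag paths is a small refinement the paper omits, but the core argument is identical.
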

\begin{proof} \
We suppose that (SC) is fulfilled and let $0 \leq a < b \leq T$ 
for which $X_b-X_a$ is deterministic. Consequently 
$-(\Psi^{''}_b(0)-\Psi^{''}_a(0))=Var(X_b-X_a)=0$. 
This implies that $X_t-X_a$ is deterministic for every $t \in [a,b]$. By (\ref{SCPII}), it follows that $\Psi^{'}_t(0)=\Psi^{'}_a(0)$, $\forall t \in [a,b]$. Hence, for any $t \in [a,b]$, we have $X_t-X_a=\E[X_t-X_a]=0$.
%This conclude the proof of Proposition $\ref{PSCPIII}$.
\end{proof}

The following technical result will be useful in the sequel. 
\begin{propo}\label{LSCP1}
If $X$ satisfies condition (SC), there is $\tilde{a}:[0,T]\rightarrow \R$ increasing such that $d\tilde{a}_t$ is equivalent to $-d(\Psi_t^{''}(0))$ and \eqref{eq:LevyKhinAbs} holds with $da_t$ replaced by $d\tilde{a}_t$. 
\end{propo}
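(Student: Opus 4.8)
The plan is to take $\tilde a_t := -\Psi''_t(0)$ itself as the new reference function: by Proposition~\ref{prop:SCPII} and Remark~\ref{RQEFS} this map is increasing (and continuous), and of course $d\tilde a_t = -d(\Psi''_t(0))$, so the two measures are equivalent (indeed equal). The only real content is then to rewrite the L\'evy--Khintchine triplet relative to $d\tilde a$ instead of $da$. To prepare this I would first read off from \eqref{3.1bis} and \eqref{EXi}, evaluated at $u=0$, the two fundamental identities
\[
-d(\Psi''_t(0)) = \rho_t\, da_t, \qquad dA_t = \beta_t\, da_t,
\]
where $\rho_t := c_t + \int_{\R} x^2 F_t(dx) \ge 0$ and $\beta_t := b_t + \int_{|x|>1} x\, F_t(dx)$ (the latter integral being finite $da$-a.e. by square integrability, since $|x|\le x^2$ for $|x|>1$ and $\int x^2 F_t(dx)<\infty$).

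The heart of the argument is to show that \emph{all} characteristics vanish $da_t$-a.e. on the degeneracy set $N := \{t : \rho_t = 0\}$, so that passing from $da=\rho^{-1}d\tilde a$ to $d\tilde a = \rho\, da$ loses no information. On $N$ we immediately get $c_t = 0$ and $\int_{\R} x^2 F_t(dx) = 0$; since $F_t$ carries no mass at the origin this forces $F_t = 0$, hence $\int_{|x|>1} x\, F_t(dx) = 0$ and $\beta_t = b_t$ there. The nontrivial point is that the drift $b_t$ itself vanishes on $N$, and this is exactly where (SC) enters: comparing $A_t = \int_0^t \beta_s\, da_s$ with $A_t = \int_0^t \alpha_s\, d\langle M\rangle_s = \int_0^t \alpha_s \rho_s\, da_s$ from \eqref{AMPII} gives $\beta_s = \alpha_s \rho_s$ for $da$-a.e.\ $s$, so $\beta_s = 0$, and therefore $b_s = 0$, on $N$. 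Consequently $\eta_s(u) = 0$ for every $u$ whenever $s \in N$, i.e.\ the integrand of \eqref{eq:LevyKhinAbs} is supported on $\{\rho > 0\}$.

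It then remains to perform a routine Radon--Nikodym change of reference measure on $\{\rho > 0\}$, where $da_s = \rho_s^{-1}\, d\tilde a_s$. I would set $\tilde b_s := b_s/\rho_s$, $\tilde c_s := c_s/\rho_s$ and $\tilde F_s := \rho_s^{-1} F_s$ on $\{\rho_s > 0\}$, and all of them $0$ on $N$; these are legitimate characteristics, since $\tilde c_s \ge 0$, $\tilde F_s$ is a positive measure, and $\int_{[0,T]\times\R}(1\wedge x^2)\,\tilde F_s(dx)\,d\tilde a_s = \int_{[0,T]\times\R}(1\wedge x^2)\,F_s(dx)\,da_s < \infty$. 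By construction the associated exponent $\tilde\eta$ satisfies $\tilde\eta_s(u)\,d\tilde a_s = \tilde\eta_s(u)\rho_s\, da_s = \eta_s(u)\,da_s$ on $\{\rho>0\}$, while both sides are $0$ on $N$, whence $\int_0^t \tilde\eta_s(u)\,d\tilde a_s = \int_0^t \eta_s(u)\, da_s = \Psi_t(u)$, which is precisely \eqref{eq:LevyKhinAbs} with $da$ replaced by $d\tilde a$.

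The step I expect to be the main obstacle is not the measure-theoretic bookkeeping but the vanishing of the drift on $N$: one must exclude a purely deterministic drift increment sitting on a time set where $\langle M\rangle$ (equivalently $-\Psi''(0)$) is flat. This is exactly what is forbidden by (SC) through the absolute continuity $dA \ll d\langle M\rangle$, and it is morally the same phenomenon already isolated in Proposition~\ref{PSCPIII}. Secondary care is needed to justify $F_t = 0$ on $N$ and the finiteness of $\int_{|x|>1}|x|\,F_t(dx)$, both of which follow from square integrability.
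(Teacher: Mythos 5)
Your proof is correct, but it follows a genuinely different route from the paper's. The paper does not take $\tilde a_t=-\Psi''_t(0)$; instead it keeps the original $da$ and restricts it to the set $E_R$ of times around which the variance actually grows, setting $\tilde a_t=\int_{[0,t]\cap E_R}da_s$. The equivalence of $d\tilde a$ with $-d\Psi''_t(0)$ is then obtained by a topological argument: $[0,T]$ is quotiented by the semidistance $d(u,v)=Var(X_v-X_u)$, the deterministic stretches $I_n$ collapse to points, and a Radon--Nikodym non-vanishing lemma (Lemma 3.12 of \cite{GOR}) is applied on the quotient; condition (SC) enters through Proposition~\ref{PSCPIII}, which forces $X$ (hence $\Psi_\cdot(u)$) to be constant on each deterministic stretch. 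Your argument replaces all of this by a direct change of density: writing $-d\Psi''_t(0)=\rho_t\,da_t$ with $\rho_t=c_t+\int x^2F_t(dx)$, the only obstruction to transferring \eqref{eq:LevyKhinAbs} is the degeneracy set $N=\{\rho=0\}$, and you correctly identify that (SC) --- via the two representations $dA_t=\beta_t\,da_t=\alpha_t\rho_t\,da_t$ --- kills the drift there, while nonnegativity kills $c$ and $F$. This is the same phenomenon the paper isolates in Proposition~\ref{PSCPIII}, used at the level of densities rather than of increments. Your version is shorter, avoids the external lemma and the metric-space bookkeeping, and has the bonus that the normalization $\tilde c_s+\int x^2\tilde F_s(dx)=1$ of Corollary~\ref{LSCP}, item 2, drops out by construction rather than requiring a further argument. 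The measure-theoretic details (finiteness of $\int_{|x|>1}|x|F_t(dx)$, the $u$-uniformity of the $da$-null set on $N$) are handled adequately.
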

\begin{proof}
Appendix.
\end{proof}
From now on, $a_t$ will be replaced by $-\Psi_t''(0)$. Equalities and inequalities will generally hold $d(-\Psi''_t(0))$ a.e. with respect to $t$. 

\begin{corro}
\label{LSCP}
We suppose that $X$ is square integrable and it fulfills (SC). 
Then for every $u\in \R$, $t\mapsto \Psi_t(u)$, $t\mapsto \Psi'_t(u)$ and $t\mapsto \Psi''_t(u)$, are a.c. w.r.t. $-\Psi''_t(0)$. 
\begin{enumerate}
\item In particular,
$$
\Psi_t^{'}(u)=\int_0^t\zeta_s(u)d(-\Psi_s^{''}(0)) \quad \textrm{and} \quad \Psi_t^{''}(u)=\int_0^t\xi_s(u)d(-\Psi_s^{''}(0))\ ,
$$
where
$$
\zeta_s(u)=ib_s-uc_s+\int_\R ix(e^{iux}-1_{\{\vert x\vert \leq 1\}}) F_s(dx)
 \quad \textrm{and} \quad \xi_s(u)=c_s+\int_\R x^2e^{iux}F_s(dx)\ .
$$
\item Setting $u=0$, we obtain $\xi_s(0)=c_s+\int_\R x^2F_s(dx)=1$, $d(-\Psi_s^{''}(0))$ a.e.
%\item For any $u,v\in \R$, the function $t\mapsto \nu_t(u,v)$ introduced in (\ref{FEC2bis})   is absolutely continuous with respect to $d(-\Psi_t^{''}(0))$.
\end{enumerate}
\end{corro}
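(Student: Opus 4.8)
The plan is to establish absolute continuity of the three maps $t \mapsto \Psi_t(u)$, $t \mapsto \Psi'_t(u)$, $t \mapsto \Psi''_t(u)$ with respect to the measure $d(-\Psi''_t(0))$, and then to read off the explicit integrands from the Lévy–Khintchine representation \eqref{eq:LevyKhinAbs}. By Proposition~\ref{LSCP1} we may assume from the outset that the increasing driver $a_t$ in \eqref{eq:LevyKhinAbs} has been replaced by the equivalent measure $-d(\Psi''_t(0))$, so that $da_t$ and $d(-\Psi''_t(0))$ share the same null sets. With this normalization, the desired absolute continuity is almost built into the representation: all three of $\Psi_t(u)$, $\Psi'_t(u)$, $\Psi''_t(u)$ are already written in \eqref{eq:LevyKhinAbs}, \eqref{3.1bis}, \eqref{EXi} as $da_s$-integrals of the respective integrands $\eta_s(u)$, $\zeta_s(u):=$ the bracket in \eqref{3.1bis}, and $-\xi_s(u)$, where $\xi_s(u)=c_s+\int_\R x^2 e^{iux}F_s(dx)$.

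The key steps, in order, are as follows. First I would invoke Remark~\ref{RemarkRCher4} (valid since $X$ is square integrable) to legitimize differentiation under the integral sign, which yields the integral representations \eqref{3.1bis} for $\Psi'_t(u)$ and \eqref{EXi} for $\Psi''_t(u)$; the dominating functions $\gamma(s,x)=2bx^2$ and $x^2$ exhibited there give the required $\nu$-integrability. Second, after the substitution $da_s \rightsquigarrow d(-\Psi''_s(0))$ provided by Proposition~\ref{LSCP1}, each of these becomes an integral against $d(-\Psi''_s(0))$, which is exactly the asserted absolute continuity, with $\Psi'_t(u)=\int_0^t \zeta_s(u)\,d(-\Psi''_s(0))$ and $\Psi''_t(u)=\int_0^t \xi_s(u)\,d(-\Psi''_s(0))$. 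Third, for part 2, I would set $u=0$ in the expression for $\xi_s(u)$ to get $\xi_s(0)=c_s+\int_\R x^2 F_s(dx)$; since $\Psi''_t(0)=\int_0^t \xi_s(0)\,d(-\Psi''_s(0))=-\int_0^t \xi_s(0)\,d(\Psi''_s(0))$, the Radon–Nikodym derivative of $\Psi''_t(0)$ against itself forces $\xi_s(0)=1$ for $d(-\Psi''_s(0))$-almost every $s$.

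The normalization step deserves a word on why it is legitimate rather than circular: the original representation uses an arbitrary increasing $a$, and the whole point of Proposition~\ref{LSCP1} is that condition (SC) guarantees $d\tilde a_t \sim -d(\Psi''_t(0))$, so the rewritten integrands $\zeta_s(u)$ and $\xi_s(u)$ are genuinely the densities $\frac{d\eta_\cdot(u)}{\,} $ relative to the new driver. The main obstacle I anticipate is purely bookkeeping: one must check that replacing $da_s$ by $d(-\Psi''_s(0))$ transforms the integrands $b_s,c_s,F_s(dx)$ correctly (they get multiplied by the density $\frac{da_s}{d(-\Psi''_s(0))}$), so that the new $\xi_s(u)$ still satisfies $\xi_s(0)=1$ a.e.\ — which is precisely the consistency statement in part 2 and serves as a sanity check that the normalization was carried out properly. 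Everything else is a direct transcription of the differentiation-under-the-integral computations already performed in Remark~\ref{RemarkRCher4}.
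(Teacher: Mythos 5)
Your proposal is correct and takes essentially the same route as the paper, whose proof is a one-line citation of exactly the ingredients you assemble: Proposition~\ref{LSCP1} for the equivalence $d\tilde a_t \sim d(-\Psi''_t(0))$, item 3.\ of Remark~\ref{RemarkRCher} for the L\'evy--Khintchine representation, and Remark~\ref{RemarkRCher4} for differentiation under the integral sign. Your explicit Radon--Nikodym argument for $\xi_s(0)=1$ and the remark on rescaling the characteristics by the density of $da$ w.r.t.\ $d(-\Psi''(0))$ merely spell out what the paper leaves implicit.
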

\begin{proof} \
It follows from Proposition \ref{LSCP1}, item 3. 
 of Remark \ref{RemarkRCher} and Remark \ref{RemarkRCher4}.
\end{proof}

\subsection{Examples}
%%%%%%%%%%%%%%%%%%%%%%%
\subsubsection{A Gaussian continuous process example} \label{SS331}
%%%%%%%%%%%%%%%%%%%%%%%%%%%%%%%%%%%%%%%%%%%%%
Let $\psi:[0,T]\rightarrow     \mathbb{R}$ be a continuous increasing
function, $\gamma:[0,T]\rightarrow \mathbb{R}$ be a bounded variation
function. 
%such that $d\gamma \ll  d\psi$
 We set
$X_t=W_{\psi(t)}+\gamma(t)$, 
where $W$ is the standard Brownian motion on $\mathbb{R}$. Clearly, $X_t=M_t+\gamma(t)$, where $M_t=W_{\psi(t)}$, defines a continuous martingale, such that $\left\langle M\right\rangle_t=\left[M\right]_t=\psi(t)$. Since 
$X_t\sim \mathcal{N}(\gamma(t),\psi(t))$, for all $u \in \mathbb{R}$ and $t \in [0,T]$, we have $\Psi_t(u)=i\gamma(t)u-\frac{u^2\psi(t)}{2}$ 
which yields $\Psi^{'}_t(0)=i\gamma(t)$ and $\Psi^{''}_t(0)=-\psi(t)$. Taking into account Proposition \ref{prop:SCPII} 2, (SC) is verified if and only if
 $ \gamma \ll \psi$ and ${\displaystyle \frac{d\gamma}{d\psi} \in \mathcal{L}^2(d\psi)}$. This is of course always verified if $\gamma \equiv 0$. 
We have $A_t=\int_0^t\alpha_sd\left\langle M\right\rangle_s 
\quad \textrm{and}\quad \alpha_t=\left . \frac{d\gamma}{d\psi}\right\vert_t
\quad \textrm{for all}\ t\in [0,T]$.
\subsubsection{Processes with independent and stationary increments
 (L\'evy processes)  }
%%%%%%%%%%%%%%%%%%%%%%%%%%%%%%%%%%%%%%%%%%%%%%%%%%%%%%%%%%%%%%%%%%%%%%%%%%
\label{sec:example:Levy}

We recall some  log-characteristic functions 
of  typical L\'evy processes. In this case we have 
 $\Psi_t(u) = t \Psi(u), t \in [0,T], x \in \R$.
%\begin{remarque}\label{remark47Bis}
\begin{enumerate}
\item \underline{Poisson Case:}
If $X$ is a Poisson process with intensity $\lambda$,
then for all $u\in \R$, $\Psi(u)=\lambda(e^{iu}-1)$, $\Psi^{'}(0) =  i\lambda$ and $\Psi^{''}(0) =  -\lambda$,  which yields $\alpha \equiv 1$.
\item \underline{NIG Case:}
This process was introduced by Barndorff-Nielsen in~\cite{bnh}.
If $X$ is a Normal Inverse Gaussian L\'evy process with 
$X_1 \sim NIG(\theta,\beta,\delta,\mu)$,  with $\theta>|\beta|>0$, $\delta>0$
then for all $u\in \R$,
$
\Psi(u)=\mu iu + \delta (\gamma_0-\gamma_{iu})\ ,
\quad\textrm{where}\quad \gamma_{iu}=\sqrt{\theta^2-(\beta+iu)^2}.
$
%with $\gamma_z=\sqrt{\theta^2-(\beta+z)^2}$. 
By derivation, one gets 
$\Psi^{'}(0) =  i\mu + \delta \frac{i\beta}{\gamma_0}$ and $\Psi^{''}(0) =  -\delta(\frac{1}{\gamma_0}+ \frac{\beta^2}{\gamma^3_0} )$
which yields
${\displaystyle \alpha \equiv i\frac{\Psi^{'}(0)}{\Psi^{''}(0)}=
\frac{\gamma^2_0(\gamma_0\mu+\delta\beta)}{\delta(\gamma^2_0+\beta)}\ .}
$
%We recall that those processes were introduced by Barndorff-Nielsen, see \cite{bnh}.
\item \underline{Variance Gamma case:} 
If $X$ is a Variance Gamma process with 
 $X_1 \sim VG(\theta,\beta,\delta,\mu)$ where  $\theta, \beta > 0, \delta \neq 0$, then for all $u\in \R$,
 The expression of the log-characteristic function  can be found in~\cite{Ka06}
 or also~\cite{livreTankovCont},  table IV.4.5 in the particular case
  $\mu = 0$. We have 
% In particular an easy calculation shows that we need $z \in \mathbb{C}$ such that $Re(z) \in ]-\beta-\sqrt{\beta^2+2\alpha},-\beta+\sqrt{\beta^2+2\alpha}[$ so that $\kappa^\Lambda(z)$ is well-defined
%so that $ D =  ]-\beta-\sqrt{\beta^2+2\alpha},-\beta+ \sqrt{\beta^2+2\alpha}[ + i \R.$
%Finally we obtain 
$ \Psi(u) = \mu iu + \delta Log\left(\frac{\theta}{\theta-\beta iu +
 \frac{u^2}{2}}\right)$, ${\rm Log}(z)=ln|z|+i{\rm Arg}(z)$, the ${\rm Arg}(z)$ being chosen in $]-\Pi,\Pi]$, 
being the complexe logarithm.
 After derivation it follows
$\Psi^{'}(0) =  i(\mu - \delta \beta)$ and $\Psi^{''}(0) =  \frac{\delta}{\theta} (\theta^2 - \beta^2)$, which yields  $\alpha \equiv
 {\displaystyle \frac{\mu - \delta \beta}{\theta^2 - \beta^2}
 \frac{\theta}{\delta}}\ .$
\end{enumerate}

\subsubsection{Wiener integrals of L\'evy processes}\label{SWIL}
%{$X_t=\int_0^t\gamma(s)d\Lambda_s$}
%%%%%%%%%%%%%%%%%%%%%%%%%%%%%%%%%%%%%%%%%%%
We take $X_t=\int_0^t\gamma_sd\Lambda_s$, where $\Lambda$ is a square integrable
L\'evy process as in Section~\ref{sec:example:Levy} with $\Lambda_0=0$.  Then,
$\int_0^T\gamma_s d\Lambda_s$ is well-defined  at least when $\gamma \in
\shl^\infty([0,T])$. It is then possible to calculate the characteristic
function and the cumulative function of $\int_0^\cdot \gamma_sd\Lambda_s$.
Let $(t,z) \mapsto t \Psi_\Lambda (z),$ \        
% (resp. $(t,z) \mapsto t \kappa^\Lambda(z)$)
 denoting the log-characteristic function 
 %(resp. the cumulant generating function) 
 of $\Lambda$. 
\begin{lemme}\label{lemme27}
Let $\gamma:[0,T] \rightarrow \R  $ 
% \in \shl^\infty([0,T])$
be a Borel bounded function.
%  Riemann integrable function (therefore bounded with at most
%countable jumps).
%\begin{enumerate}
%	\item 
The log-characteristic function of $X_t$
%\int_0^t\gamma_sd\Lambda_s$,  
is such that for all $u\in\mathbb{R}$, 
$
\Psi_{X_t}(u)=\int_0^t\Psi_\Lambda(u\gamma_s)ds\ ,\quad \textrm{where}\quad 
\mathbb{E}[\exp(iu X_t)]=\exp\big(\Psi_{X_t}(u)\big).
$
%	\item Let $D_\Lambda$ be the domain related to $\kappa^\Lambda$ 
%in the sense of
%Definition \ref{defKappa}.
%The cumulant generating function  of $X_t$
%%=\int_0^t\gamma_sd\Lambda_s$,
% is such that for all $z\in\{z\,\vert\, 
%Re z\gamma_t\in D_\Lambda\  \textrm{for all} 
%\ t\in[0,T]\}$, 
%%($D$ denoting the domain of $L_1$), 
%$$
%\kappa_{X_t}(z)=\int_0^t\kappa^\Lambda(z\gamma_s)ds.
%%\textrm{with}\quad \mathbb{E}[\exp(z X_t)]=\exp\big(\kappa_{X_t}(z)\big)\ ;
%$$
%\end{enumerate}
%
%\begin{enumerate}
%\item \begin{eqnarray*}\mathbb{E}\left[\exp\left(i\int_0^T\gamma_sdL_s\right)\right]=\exp\left(\int_0^T\Psi_L(\gamma_s)ds\right)\ .\end{eqnarray*}
%\item Let $\kappa_L:D\mapsto \mathbb{R}$, and $D\supset [0,R]$, for some $R>0$. If $\gamma_s \in [0,R]$, $\forall s \in [0,T]$ then
% \begin{eqnarray*}\mathbb{E}\left[\exp\left(\int_0^T\gamma_sdL_s\right)\right]=\exp\left(\int_0^T\kappa^\Lambda(\gamma_s)ds\right)\ .\end{eqnarray*}
%\end{enumerate}
In particular, for every $t\in [0,T]$, $u \mapsto \Psi_{X_t}(u)$ is of class $C^2$ and so $X_t$ is square integrable for any $t\in [0,T]$.
\end{lemme}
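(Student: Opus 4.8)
The plan is to establish the identity first for piecewise constant integrands, where it follows directly from the independence and stationarity of the increments of $\Lambda$, and then to pass to a general bounded Borel $\gamma$ by an approximation argument. So first I would take a step function $\gamma^{(n)}_s=\sum_{j=1}^{k}c_j\mathbf 1_{(s_{j-1},s_j]}(s)$ with $0=s_0<\dots<s_k=t$, so that $X_t^{(n)}=\sum_j c_j(\Lambda_{s_j}-\Lambda_{s_{j-1}})$. Since $\Lambda$ is a L\'evy process, the increments $\Lambda_{s_j}-\Lambda_{s_{j-1}}$ are independent and each has log-characteristic function $(s_j-s_{j-1})\Psi_\Lambda$. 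Hence $\E[\exp(iuX_t^{(n)})]=\prod_j\exp\big((s_j-s_{j-1})\Psi_\Lambda(uc_j)\big)=\exp\big(\int_0^t\Psi_\Lambda(u\gamma^{(n)}_s)\,ds\big)$, which is exactly the claimed identity for $\gamma^{(n)}$.

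Next I would approximate a bounded Borel $\gamma$ by step functions $\gamma^{(n)}$ uniformly bounded by $\|\gamma\|_\infty$ and converging to $\gamma$ Lebesgue-a.e. Writing $\Lambda$ as the sum of its deterministic drift $b\,t$, with $b=\E[\Lambda_1]$, and its square integrable martingale part, the Wiener integral is an $\shl^2$-isometry on the martingale part, so $X_t^{(n)}\to X_t$ in $\shl^2$ and a fortiori in distribution; therefore $\E[\exp(iuX_t^{(n)})]\to\E[\exp(iuX_t)]=\varphi_{X_t}(u)$. On the deterministic side, since $\Psi_\Lambda$ is continuous and the arguments $u\gamma_s^{(n)}$ remain in the compact set $[-|u|\|\gamma\|_\infty,|u|\|\gamma\|_\infty]$, dominated convergence yields $\int_0^t\Psi_\Lambda(u\gamma^{(n)}_s)\,ds\to\int_0^t\Psi_\Lambda(u\gamma_s)\,ds$, and by continuity of $\exp$ the two limits agree, giving $\varphi_{X_t}(u)=\exp(\int_0^t\Psi_\Lambda(u\gamma_s)\,ds)$. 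Since $\Psi_\Lambda(0)=0$ the exponent vanishes at $u=0$, so it is indeed the Log-characteristic function $\Psi_{X_t}$. I expect the main obstacle to be precisely this limiting step: one must simultaneously justify the $\shl^2$-convergence of $X_t^{(n)}$ (hence of the characteristic functions) and the dominated-convergence passage on the deterministic side, and then identify the two limits.

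Finally, for the regularity claim, note that $\Lambda$ being square integrable, Remark~\ref{rem:moment:b} ensures $\Psi_\Lambda\in C^2$. Then $u\mapsto\Psi_\Lambda(u\gamma_s)$ is $C^2$ with derivatives $\gamma_s\Psi_\Lambda'(u\gamma_s)$ and $\gamma_s^2\Psi_\Lambda''(u\gamma_s)$, which are bounded uniformly in $s$ on every compact $u$-interval because $\gamma$ is bounded and $\Psi_\Lambda',\Psi_\Lambda''$ are continuous. Differentiating twice under the integral sign, justified by dominated convergence, shows $\Psi_{X_t}\in C^2$, hence $\varphi_{X_t}=\exp(\Psi_{X_t})\in C^2$; by Remark~\ref{rem:moment:b} this is equivalent to the square integrability of $X_t$.
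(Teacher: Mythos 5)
Your proof is correct and follows essentially the same route as the paper's: exploit the independent, stationary increments of $\Lambda$ on a partition to obtain the identity for a discretized integrand, then pass to the limit simultaneously on the probabilistic side (convergence of the stochastic integrals, hence of the characteristic functions) and on the deterministic side (dominated convergence for $\int_0^t\Psi_\Lambda(u\gamma^{(n)}_s)\,ds$). The only cosmetic difference is that you approximate a bounded Borel $\gamma$ directly by step functions via the $L^2$-isometry, whereas the paper first treats continuous $\gamma$ through Riemann sums converging in probability and then mollifies a general bounded $\gamma$ by continuous functions; your single-step version is, if anything, slightly cleaner, and you also spell out the differentiation under the integral sign for the $C^2$ claim, which the paper only records after the proof.
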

\begin{proof}
%[\textbf{Proof of lemma \ref{lemme27}}]
%We only prove 1. since 2. follows similarly. 
Suppose first that $\gamma$ is
  continuous, then $\int_0^T\gamma_sd\Lambda_s$ is the 
limit in probability of $\sum_{j=0}^{p-1}\gamma_{t_j}(\Lambda_{t_{j+1}}-
\Lambda_{t_j})$ where $0=t_0<t_1<...<t_p=T$ is a subdivision of $[0,T]$
 whose mesh converges to zero. Using the independence of the increments,
 we have 
\begin{eqnarray*}
\mathbb{E}\left[\exp\{i\sum_{j=0}^{p-1}\gamma_{t_j}
(\Lambda_{t_{j+1}}-\Lambda_{t_j})\}\right]
&=&\prod_{j=0}^{p-1}\mathbb{E}\left[\exp\{i\gamma_{t_j}
(\Lambda_{t_{j+1}}-\Lambda_{t_j})\}\right]
=\prod_{j=0}^{p-1}\exp\{\Psi_\Lambda(\gamma_{t_j})(t_{j+1}-{t_j})\}\ ,\\ \\
&=&\exp\{\sum_{j=0}^{p-1}(t_{j+1}-{t_j})\Psi_\Lambda(\gamma_{t_j})\}\ .
\end{eqnarray*}
This converges to $\exp\left(\int_0^T\Psi_\Lambda(\gamma_s)ds\right)$, when the
mesh
 of the subdivision goes to zero.\\
Suppose now that $\gamma$ is only bounded and
% \in \shl^\infty([0,T])$ and
 consider,  using convolution, a sequence $\gamma_n$
% \in \shl^\infty([0,T])$ 
of continuous functions, such that $\gamma_n \rightarrow \gamma$ a.e. 
and $\sup_{t \in [0,T]}|\gamma_n(t)|\leq \sup_{t \in
  [0,T]}|\gamma(t)|$. 
We have proved that
\begin{eqnarray}
\mathbb{E}\left[\exp\left(i\int_0^T\gamma_n(s)d\Lambda_s\right)\right]
=\exp\left(\int_0^T\Psi_\Lambda(\gamma_n(s))ds\right)\ .
\label{F11}
\end{eqnarray}
Now, $\Psi_\Lambda$ is continuous therefore bounded, so Lebesgue dominated 
convergence and continuity of stochastic integral imply the statement.
\end{proof}
\begin{remarque}\label{remarque27}
\begin{enumerate}
\item A similar statement was written with respect to the log cumulant generating function, see \cite{BS03}.
%Previous proof, which is left to the reader, also applies for statement 2. This statement in a slight different form is proved in~\cite{BS03}
\item The proof works also when
 $\Lambda$ has no moment condition and $\gamma$ is a continuous function with bounded variation. Stochastic integrals are then defined using integration by parts.
\end{enumerate}
\end{remarque}
%
%We suppose now that $\Lambda$ is a L\'evy process such that
%$\Lambda_1$ is not deterministic. In particular $Var(\Lambda_1) \neq 0$
%and so $\Psi_\Lambda'' \neq 0$. \\
Since $\Psi_{\Lambda}$ is of class $C^2$ we have,
$
\Psi_t^{'}(u)=\int_0^t\Psi_\Lambda^{'}(u\gamma_s)\gamma_sds,
\quad \textrm{and}\quad
 \Psi_t^{''}(u)=\int_0^t\Psi_\Lambda^{''}
(u\gamma_s)\gamma^2_sds\ $.
So
\begin{equation} \label{EE27}
\Psi_t^{'}(0)=\Psi_\Lambda^{'}(0)
\int_0^t\gamma_sds, 
%\quad \textrm{and}\quad 
\quad \Psi_t^{''}(0)=\Psi_\Lambda^{''}(0)\int_0^t\gamma^2_sds
\quad \textrm{and}\quad 
\alpha_t = i \frac{\Psi_\Lambda^{'}(0)}{\Psi_\Lambda^{''}(0)} 
\frac{1_{\{\gamma_t \neq 0\}}}{\gamma_t}.
\end{equation}
\begin{remarque}\label{RPASC}
\begin{enumerate}
\item $Var(X_T)=-\Psi_\Lambda^{''}(0)\int_0^T\gamma_s^2ds$.
\item If $\Psi_\Lambda^{''}(0)=0$ then $Var(X_T)=0$ and so $Var(X_t)=0$,
 $\forall t \in [0,T]$ and so $X$ is deterministic. Consequently Condition (SC) is only verified if $X$ vanishes identically because of Proposition \ref{PSCPIII}.
\end{enumerate}
\end{remarque}
\begin{propo}\label{PASC}
Condition (SC) is always verified if $\Psi_\Lambda^{''}(0)\neq 0$.
\end{propo}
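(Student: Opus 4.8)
The plan is to reduce everything to the characterization of (SC) given in item 2 of Proposition~\ref{prop:SCPII}, and then simply read off the two required conditions from the explicit formulas already established in~\eqref{EE27}. Recall that for the Wiener integral $X_t=\int_0^t\gamma_sd\Lambda_s$ we have, by~\eqref{EE27},
\begin{equation*}
d\Psi_t'(0)=\Psi_\Lambda'(0)\,\gamma_t\,dt\ ,\qquad d\Psi_t''(0)=\Psi_\Lambda''(0)\,\gamma_t^2\,dt\ .
\end{equation*}
Since we assume $\Psi_\Lambda''(0)\neq 0$, these two measures are absolutely continuous with respect to Lebesgue measure with the densities above, and this will be the basis of the argument.

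First I would verify the absolute continuity $d\Psi_t'(0)\ll d\Psi_t''(0)$. The point is to treat the set $\{\gamma_t=0\}$ carefully: there the density $\Psi_\Lambda''(0)\gamma_t^2$ of $d\Psi_t''(0)$ vanishes, but so does the density $\Psi_\Lambda'(0)\gamma_t$ of $d\Psi_t'(0)$; on the complement $\{\gamma_t\neq 0\}$ the density of $d\Psi_t''(0)$ is nonzero because $\Psi_\Lambda''(0)\neq 0$. Hence any $d\Psi_t''(0)$-null set is also $d\Psi_t'(0)$-null, giving absolute continuity, with Radon-Nikodym derivative
\begin{equation*}
\frac{d\Psi_t'}{d\Psi_t''}(0)=\frac{\Psi_\Lambda'(0)}{\Psi_\Lambda''(0)}\,\frac{1_{\{\gamma_t\neq 0\}}}{\gamma_t}\ ,
\end{equation*}
consistent with the formula $\alpha_t=i\,d\Psi_t'(0)/d\Psi_t''(0)$ already recorded in~\eqref{EE27}.

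Next I would check the square-integrability condition in~\eqref{SCPII}. Substituting the derivative above together with $|d\Psi_s''(0)|=|\Psi_\Lambda''(0)|\gamma_s^2\,ds$, the factor $\gamma_s^2$ cancels against $1/\gamma_s^2$, leaving
\begin{equation*}
\int_0^T\left|\frac{d\Psi_s'}{d\Psi_s''}(0)\right|^2\,|d\Psi_s''(0)|
=\frac{|\Psi_\Lambda'(0)|^2}{|\Psi_\Lambda''(0)|}\int_0^T 1_{\{\gamma_s\neq 0\}}\,ds
\leq \frac{|\Psi_\Lambda'(0)|^2}{|\Psi_\Lambda''(0)|}\,T<\infty\ .
\end{equation*}
Both conditions of~\eqref{SCPII} thus hold, so Proposition~\ref{prop:SCPII} yields (SC).

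I do not expect a genuine obstacle here: once~\eqref{EE27} is in hand the verification is a direct computation. The only mild subtlety worth spelling out is the measure-theoretic handling of the degeneracy set $\{\gamma_t=0\}$, where both densities vanish simultaneously so that the Radon-Nikodym derivative is well defined and the indicator $1_{\{\gamma_s\neq 0\}}$ appears; this is exactly what makes the cancellation of $\gamma_s^2$ legitimate and keeps the final integral bounded by a constant times $T$. The hypothesis $\Psi_\Lambda''(0)\neq 0$ is used precisely to guarantee the denominator $|\Psi_\Lambda''(0)|$ is nonzero, and consistency with Remark~\ref{RPASC} confirms that when $\Psi_\Lambda''(0)=0$ the process degenerates and (SC) fails unless $X\equiv 0$.
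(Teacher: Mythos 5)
Your proposal is correct and follows essentially the same route as the paper: both reduce the claim to item 2 of Proposition~\ref{prop:SCPII}, read the densities of $d\Psi'_t(0)$ and $d\Psi''_t(0)$ with respect to Lebesgue measure off~\eqref{EE27}, handle the degeneracy set $\{\gamma_t=0\}$ via the indicator $1_{\{\gamma_t\neq 0\}}$, and observe that the $\gamma_s^2$ factors cancel so the mean-variance trade-off integral is bounded by a constant times $T$. The only cosmetic difference is that you justify $d\Psi'_t(0)\ll d\Psi''_t(0)$ through a null-set argument, whereas the paper exhibits the integral representation $\Psi'_t(0)-\Psi'_s(0)=\int_s^t(-i\alpha_r)\,d(-\Psi''_r(0))$ directly.
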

\begin{proof}
We take into account item 2. of Proposition \ref{prop:SCPII}. Let $0< s<t\leq T$, \eqref{EE27} implies
\begin{eqnarray*}
\Psi_t^{'}(0)-\Psi_s^{'}(0)&=&\Psi_\Lambda^{'}(0)\int_s^t\gamma_rdr=\int_s^t\frac{\Psi_\Lambda^{'}(0)}{\gamma_r}1_{\{\gamma_r \neq 0\}}\gamma^2_rdr=\int_s^t\left(-i\alpha_r\right)d\left(-\Psi_r^{''}(0)\right)
\end{eqnarray*}
where
$
\alpha_r=\frac{\Psi_\Lambda^{'}(0)}{\Psi_\Lambda^{''}(0)}\frac{i}{\gamma_r}1_{\{\gamma \neq 0\}}
$.
This shows the first point of \eqref{SCPII}. In particular
$
\left|\frac{d\Psi_t^{'}(0)}{d\Psi_t^{''}(0)}\right|=\frac{|\Psi_\Lambda^{'}(0)|}{-\Psi_\Lambda^{''}(0)}\frac{1}{\gamma_t}
$.
The second point of \eqref{SCPII} follows because
$
\int_0^T|i\alpha_r|^2d\left(\Psi_r^{''}(0)\right)=T\frac{|\Psi_\Lambda^{'}(0)|^2}{\left(-\Psi_\Lambda^{''}(0)\right)^2}<\infty
$.
\end{proof}
% OLD VERSION PROPO
%\begin{propo}\label{PASC}
%Condition (SC) is verified.
%\end{propo}
%%\begin{remarque}
%%Since $Var(X_T)=-\Psi_\Lambda^{''}(0)\int_0^T\gamma^2_sds >0$ by Assumption \ref{HPAIND}, it follows that $-\Psi_\Lambda^{''}(0)>0$.
%%\end{remarque}
%\begin{proof}
%We take into account item 2. of Proposition \ref{prop:SCPII}. If for some Borel subset $B$ of $[0,T]$ we have
%$-\int_B\Psi^{''}_{dt}(0)>0$, it follows that $\gamma_t^2 \neq 0$ on $B$ a.e. So $\gamma_t=0$ on $B$ a.e. which implies $\int_B\Psi^{'}_{dt}(0)=0$. This implies the validity of the first point of (\ref{SCPII}). Consequently setting $\alpha_t=i\frac{d\Psi^{'}_t(0)}{d\Psi^{''}_t(0)}$, it follows
%%Condition (SC) is verified since $d\Psi_t^{'}(0)\ll d\Psi_t^{''}(0)$ with 
%$$
%{\displaystyle \alpha_t=
%%i\frac{d\Psi_t^{'}(0)}{d\Psi_t^{''}(0)}=
%\frac{\Psi_\Lambda^{'}(0)}{\Psi_\Lambda^{''}(0)}\,\frac{i}{\gamma_t}} 
%1_{\{\gamma_t \neq 0 \}} 
%\quad \textrm{and}\quad 
%{\displaystyle \int_0^T \alpha_s^2 \,
%\vert \Psi_s^{''}(0)\vert \gamma_s^2  ds
%=T \frac{\vert \Psi_\Lambda^{'}(0)\vert ^2}{\vert 
%\Psi_\Lambda^{''}(0)\vert } <\infty}\ .
%$$
%\end{proof}
%FIN COMMENTAIRE
\section{Explicit F\"ollmer-Schweizer decomposition in the PII case}

\setcounter{equation}{0}

Let $X=(X_t)_{t \in [0,T]}$ be a semimartingale (measurable process)
 with independent increments with log-characteristic function 
$(t,u)\mapsto \Psi_t(u)$. We assume that $(X_t)_{t \in [0,T]}$ is
 square integrable.
In this section, we  first  evaluate an explicit Kunita-Watanabe decomposition of a random variable $H$ w.r.t. the martingale part $M$ of $X$. Later, we 
obtain the decomposition with respect to $X$. Before doing so, it is useful
 to introduce in the following preliminary subsection an \textit{expectation} operator and a \textit{derivative} operator related to $X$. 

From now on we will suppose the validity of the (SC) condition.

%%%%%%%%%%%%%%%%%%%%%%%%%%%%%%%%%%%%%%%%%%%%%%%%%%%%%%%%%%
\label{sec:FSPII}
\subsection{On some expectation and derivative operators}\label{s61}
%%%%%%%%%%%%%%%%%%%%%%%%%%%%%%%%%%%%%

% In particular 
%$u\mapsto \Psi_t(u)$ is of class $C^2$, for any $t \in [0,T]$. 

%%%%%%%%%%%%%%%%%%%%%%%%%%%%%%%%%%%%%%%%%%%%%%%%%%%%%%%%%%%%%%%%%%
%Debut Ajout Nadia 29/06/2011
%%%%%%%%%%%%%%%%%%%%%%%%%%%%%%%%%%%%%%%%%%%%%%%%%%%%%%%%%%%%%%%%%%

We first introduce the \textit{expectation} operator related to $X$. 
For $0 \le t \le T$,
let $\epsilon^X_{t,T}$ denote the complex valued function defined for all $u\in \R$ by 
\begin{equation}
\label{eq:et}
\epsilon^X_{t,T}(u):=\exp(\Psi_{T}(u)-\Psi_t(u))\ .
\end{equation}
In the sequel, to simplify notations, we will write $ \epsilon_{t,T}$ instead of $\epsilon^X_{t,T}$. \\
%
%\begin{remarque} 
%\label{RA3}
We observe that the function
$
(u,t) \mapsto \epsilon_{t,T}(u)%=\exp(\Psi_T(u) - \Psi_t(u))
$ 
and 
$
(u,t)\mapsto \epsilon^2_{t,T}(u)%=\exp(2(\Psi_T(u) - \Psi_t(u)))
$ 
%\begin{eqnarray*}
% (u,t) &\mapsto& \exp(\Psi_T(u) - \Psi_t(u))\\
% (u,t)& \mapsto& \exp(2(\Psi_T(u) - \Psi_t(u)))
%\end{eqnarray*}
are uniformly bounded because the characteristic function is bounded.
%\end{remarque}
%
The  lemma below shows that the function $ \epsilon_{t,T}$ is closely related to the conditional expectation. 
\begin{lemme}
\label{lem:expectation}
%Let $X$ be a PII with finite second order moments and
Let $H=f(X_T)$
 where $f$ is given as a Fourier transform, 
$
f(x):=\hat{\mu}(x):=\int_{\R} e^{iux} \mu(du)\ ,
$ 
of a (finite) complex measure $\mu$  defined on $\R$.  \\
Then, for all $t\in [0,T]$, 
%the product $ \epsilon_{t,T}\mu$ is a  complex measure 
%and we denote by $e_{t,T}$  the complex valued function,
%for all $x \in \R$, 
$\E[f(X_T) \vert \mathcal{F}_t] = e_{t,T}(X_t)$
where for all $x \in \R$,
$$ e_{t,T}(x):=\widehat{ \epsilon_{t,T}\mu}(x)=\int_{\R} e^{iux}
 \epsilon_{t,T}(u) \mu(du)\ .
$$
\end{lemme}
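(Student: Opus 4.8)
The plan is to rewrite $H$ as a $\mu$-integral of complex exponentials and then to push the conditional expectation inside that integral. First I would use the Fourier representation $f(x)=\int_\R e^{iux}\mu(du)$ to write $f(X_T)=\int_\R e^{iuX_T}\mu(du)$, so that, at least formally,
$$
\E[f(X_T)\mid\mathcal{F}_t] = \int_\R \E[e^{iuX_T}\mid\mathcal{F}_t]\,\mu(du).
$$
To justify this interchange rigorously I would invoke a conditional version of Fubini's theorem: since $\mu$ is a finite complex measure and the integrand obeys $|e^{iuX_T}|\equiv 1$, the map $(u,\omega)\mapsto e^{iuX_T(\omega)}$ is $\mu\otimes P$-integrable, which legitimates exchanging $\E[\cdot\mid\mathcal{F}_t]$ with $\int_\R\cdots\mu(du)$.

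The core of the argument is then the computation of the conditional characteristic function $\E[e^{iuX_T}\mid\mathcal{F}_t]$. Writing $X_T=X_t+(X_T-X_t)$ and using that $X_t$ is $\mathcal{F}_t$-measurable while, by the PII property of Definition~\ref{defPAI}, the increment $X_T-X_t$ is independent of $\mathcal{F}_t$, I would factor
$$
\E[e^{iuX_T}\mid\mathcal{F}_t]
= e^{iuX_t}\,\E[e^{iu(X_T-X_t)}]
= e^{iuX_t}\exp\big(\Psi_{X_T-X_t}(u)\big).
$$
By the additivity of the log-characteristic function (item~1 of Remark~\ref{RemarkRCher}) one has $\Psi_{X_T-X_t}(u)=\Psi_T(u)-\Psi_t(u)$, so the last exponential is precisely $\epsilon_{t,T}(u)$ as defined in~\eqref{eq:et}.

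Substituting this back and using the conditional Fubini step in the reverse direction gives
$$
\E[f(X_T)\mid\mathcal{F}_t]
= \int_\R e^{iuX_t}\,\epsilon_{t,T}(u)\,\mu(du)
= \widehat{\epsilon_{t,T}\mu}(X_t)
= e_{t,T}(X_t),
$$
which is the asserted identity. The only genuinely technical point is the measurability and integrability needed to apply conditional Fubini; here it is immediate because $\mu$ is finite and the exponentials are bounded by $1$, so no integrability hypothesis on $X$ beyond those already standing is required. The remaining steps — splitting the increment and identifying $\Psi_T-\Psi_t$ — are direct consequences of the independence of increments and of Remark~\ref{RemarkRCher}, so I expect no serious obstacle beyond carefully stating the conditional Fubini interchange.
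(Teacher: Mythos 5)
Your proposal is correct and follows essentially the same route as the paper's own proof: a Fubini interchange of the conditional expectation with the $\mu$-integral (justified by the finiteness of $\mu$ and the boundedness of the complex exponentials), followed by the factorization $\E[e^{iuX_T}\mid\mathcal{F}_t]=e^{iuX_t}\exp(\Psi_T(u)-\Psi_t(u))$ via the independence of the increment $X_T-X_t$ from $\mathcal{F}_t$. Nothing is missing.
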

\begin{proof}
First, we easily check that $\int_{\R}  \epsilon_{t,T}(u)\mu(du)<\infty$, since $\mu$ is supposed to be a finite measure. \\
Now, let us consider the conditional expectation $\E[f(X_T)\,\vert \mathcal{F}_t]
%=\E[f(X_T)\,\vert X_t]
$. By Fubini's theorem, 
$$
\E[f(X_T)\,\vert \mathcal{F}_t]=\E\left[\int_{\R} e^{iuX_T} \,\mu(du)\,\vert\, \mathcal{F}_t\right]=\int_{\R} \mu(du)\E[e^{iuX_T} \,\vert\, \mathcal{F}_t]\ .
$$
Finally,  remark that by the independent increments property of $X$, we obtain 
$$
\E[e^{iuX_T}\,\vert \mathcal{F}_t]=
%\frac{\E[e^{iuX_T}]}{\E[e^{iuX_t}]} e^{iuX_t}
\E\left[ e^{iu(X_T-X_t)}e^{iuX_t} \vert\, \mathcal{F}_t\right]
=\exp\big (\Psi_T(u)-\Psi_t(u)\big ) e^{iuX_t}\ ,\quad \textrm{for all}\ u\in\R\ .
$$
\end{proof}

%%%%%%%%%%%%%%%%%%%%%%%%%%%%%%%%%%%%%%%%%%%%%
Now let us introduce the \textit{derivative} operator related to the PII $X$. 
Let $\delta^X_t$ denote the complex valued function defined for all $u\in \R$
 by the Radon-Nykodim derivative
\begin{equation}
\label{eq:dt}
\delta^X_t(u):=i\frac{d(\Psi'_t(u)-\Psi'_t(0))}{d\Psi^{''}_t(0)}\ ,
\end{equation}
which is well-defined by Corollary~\ref{LSCP}. 
In the sequel, to simplify notations, we will write $ \delta_t$ instead of $\delta^X_t$. \\
%
%\begin{remarque}
%\label{Rdelta}
%\begin{enumerate}
%\item 
%We remind that, taking into account Proposition \ref{LSCP1}, 
%there is no restriction of
%generality to take $-d\Psi''(0) = da_s$. Consequently
% $c_t+\int_{\R}x^2F_t(dx) \equiv 1, \  -d\Psi''_t(0)$ a.e. for all $t\in [0,T]$.
%\item Frow now on we will always adopt the convention
%$a = -\Psi''_t(0)$.
%\item  
By \eqref{3.1bis} in Remark \ref{RemarkRCher} 4., we obtain
\begin{equation} 
\label{eq:dtbis}
 \delta_t(u)=iu c_t+\int_{\R}x(e^{iux}-1)
% 1_{\{ \vert x \vert \le 1\}}
F_t(dx) \ .  
\end{equation}
%ATTENTION PAR LA SUITE: $m_t \equiv 1$.
%Derivating under the integral $\int_{\mathbb{R}} 
%(e^{iu x}-1-iu x\mathbf{1}_{|x|\leq 1})F_s(dx)$ can be justified by observing that for all $u,x\in \R$, 
%\end{enumerate}
%\end{remarque}
The lemma below shows that the function $ \delta_{t}$ is closely related to the Malliavin derivative in the sense of~\cite{nunno}. 
\begin{lemme}
\label{lem:derivative}
Let $\eta$ be a finite complex measure defined on $\R$ with a finite first order moment and $g$ its Fourier transform, i.e. the complex-valued function such that for all $x \in \R$, 
$g(x)=\hat{\eta}(x):=\int_{\R} e^{iux} \eta(du)\ .$
%[CONDITION SUIVANTE PEUT-ETRE A SUPPRIMER]
%Assume that  $m_t:=c_t+\int_{\R}x^2F_t(dx)>0\ da_s$ a.e. for all $t\in [0,T]$, 
%[FIN CONDITION]
\begin{enumerate}
\item $g$ is differentiable with  bounded derivative;
\item $ \delta_t(u)\eta(du)$ is a finite complex measure.
\item  For all $x\in \R$, 
\begin{equation}
\label{eq:lem:derivative}
\widehat{ \delta_t\eta}(x):=\int_{\R}e^{iux} \delta_t(u)\eta(du) =c_t {g}'(x)+\int_{\R}\big ({g}(x+y)-{g}(x)\big )yF_t(dy)\ .
\end{equation}
\end{enumerate}
\end{lemme}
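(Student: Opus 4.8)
The plan is to handle the three items in order, the crux being a linear growth bound $|\delta_t(u)|\le C_t(1+|u|)$ that simultaneously delivers item 2 and legitimates the Fubini exchange needed for item 3.

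First, for item 1, I would differentiate $g(x)=\int_\R e^{iux}\eta(du)$ under the integral sign. Since $\eta$ has a finite first order moment, the family $u\mapsto iu\,e^{iux}$ is dominated, uniformly in $x$, by the $|\eta|$-integrable function $u\mapsto|u|$; the classical theorem on differentiation of parameter integrals (via Lebesgue dominated convergence applied to the difference quotients) then yields that $g$ is differentiable with $g'(x)=\int_\R iu\,e^{iux}\eta(du)$, and $|g'(x)|\le\int_\R|u|\,|\eta|(du)<\infty$ is the asserted uniform bound.

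Next, for item 2, I would establish the estimate on $\delta_t$ from its explicit expression \eqref{eq:dtbis}, $\delta_t(u)=iuc_t+\int_\R x(e^{iux}-1)F_t(dx)$. Using $|e^{iux}-1|\le|u||x|$ for $|x|\le1$ and $|e^{iux}-1|\le2$ for $|x|>1$, one bounds
$$\int_\R|x|\,|e^{iux}-1|\,F_t(dx)\le|u|\int_{|x|\le1}x^2F_t(dx)+2\int_{|x|>1}|x|F_t(dx).$$
The first integral is finite because $\int(1\wedge x^2)F_t(dx)<\infty$, and the second because square integrability of $X$ forces $\int_{|x|>1}x^2F_t(dx)<\infty$, hence $\int_{|x|>1}|x|F_t(dx)<\infty$ (indeed Corollary~\ref{LSCP} gives $c_t+\int_\R x^2F_t(dx)=1$ $d(-\Psi''_t(0))$-a.e., so these quantities are even uniformly controlled). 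This produces $|\delta_t(u)|\le C_t(1+|u|)$ for a finite constant $C_t$, whence $\int_\R|\delta_t(u)|\,|\eta|(du)\le C_t\big(|\eta|(\R)+\int_\R|u|\,|\eta|(du)\big)<\infty$, so $\delta_t(u)\eta(du)$ is a finite complex measure.

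Finally, for item 3, I would insert \eqref{eq:dtbis} (with the jump variable renamed $y$) into $\widehat{\delta_t\eta}(x)=\int_\R e^{iux}\delta_t(u)\eta(du)$ and split it into the Gaussian and the jump contributions. The term $c_t\int_\R iu\,e^{iux}\eta(du)$ equals $c_t\,g'(x)$ by item 1. For the jump term, the growth estimate together with the finite first moment of $\eta$ makes the double integral absolutely convergent, so Fubini applies; exchanging the order and using $e^{iux}(e^{iuy}-1)=e^{iu(x+y)}-e^{iux}$ gives
$$\int_\R y\Big(\int_\R\big(e^{iu(x+y)}-e^{iux}\big)\eta(du)\Big)F_t(dy)=\int_\R y\big(g(x+y)-g(x)\big)F_t(dy),$$
which is the second term of the claimed identity. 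The main obstacle is the uniform-in-$x$ integrability required to justify both the differentiation under the integral in item 1 and the Fubini exchange in item 3; once the bound $|\delta_t(u)|\le C_t(1+|u|)$ is secured from the square integrability of $X$ (equivalently the normalization in Corollary~\ref{LSCP}), the rest is routine manipulation of Fourier integrals.
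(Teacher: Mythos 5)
Your proposal is correct and follows essentially the same route as the paper: item 1 by differentiation under the integral sign, item 2 via a linear growth bound $|\delta_t(u)|\le C(1+|u|)$ obtained from the explicit formula \eqref{eq:dtbis} and the normalization $c_t+\int_\R x^2F_t(dx)=1$ of Corollary~\ref{LSCP}, and item 3 by Fubini. The only difference is cosmetic: the paper bounds $|x(e^{iux}-1)|$ by $2|x||\sin\frac{ux}{2}|\le 2(|u|\vee 1)(x^2\wedge|x|)$ where you split the cases $|x|\le 1$ and $|x|>1$, but both yield the same linear estimate.
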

\begin{proof}
Item 1. is obvious.
We prove item 2. i.e. that $\int_{\R}\vert  \delta_t\eta \vert (du)<\infty$. For this, notice that the following upper bound
 holds for all $u,x\, \in \R$, 
\begin{equation}
\label{eq:major}
\vert x(e^{iux}-1)\vert =2\vert x\vert\, \left\vert \sin\frac{ux}{2}\right\vert \leq 2(\vert u\vert\vee 1)( x^2\wedge \vert x\vert)\ .
\end{equation}
Now, using the expression~(\ref{eq:dtbis}) of $ \delta_t$ 
%and the fact that $F_t$ is supposed to integrate $x^2$
 yields 
\begin{equation}
\label{eq:major:dt}
\vert  \delta_t(u)\vert
\leq 
\sqrt{2}\,\left [c_t\vert u\vert +2(1+\vert u\vert) \int_{\R} x^2\,F_t(dx)\right]
\leq 
2\sqrt{2} (1+\vert u\vert) \ ,
\end{equation}
because by point 2. of Corollary~\ref{LSCP}, $\ c_t \le 1$ and
$\int_{\R}x^2F_t(dx) \le 1 $ $\ d(-\Psi''_t(0))-$a.e. 
%NADIA, POURQUOI CECI EST VRAI???
Finally, \eqref{eq:major:dt} and
the fact that $\eta$ is supposed to have a finite first order moment
imply the result.\\
We go on with the proof of  point 3.
Now we can consider the Fourier transform $\widehat{ \delta_t\eta}$. 
Using Fubini's theorem and \eqref{eq:major:dt},
 we obtain the following expression 
\begin{eqnarray*}
\widehat{ \delta_t\eta}(x)&=&\int_{\R}  \delta_t(u)\eta(du)e^{iux} \\
&=& c_t\int_{\R} iu \,\eta(du)e^{iux}+\int_{\R} \left ( \int_{\R}
 \eta(du)e^{iu(x+y)}-\int_{\R}
 \eta(du)e^{iux}  \right) y F_t(dy)\ .
\end{eqnarray*}
\end{proof}

We are now in the position to state an explicit expression for the Kunita-Watanabe decomposition of some random variables of the form $H=f(X_T)$. 
To be more specific, we consider a random variable which is given 
as a Fourier transform of $X_T$, 
\begin{eqnarray}
\label{ES1}
H=f(X_T) \quad \textrm{with}\quad f(x)=\hat{\mu}(x)=\int_{\mathbb{R}}e^{iu x}\mu(du)\ ,\quad \textrm{for all}\ x\in\mathbb{R}
\end{eqnarray}
for some finite complex signed measure $\mu$. 
%
%Then we end this subsection by introducing a second assumption involving both the underlying PII $X$ and the function $f$ defining the random variable $H=f(X_T)$. 
%\begin{Hyp}\label{HypoFSPII:2}  
%$\quad  
%\int_\R \zeta(u) d\vert \mu(u) \vert < \infty\ .$
%%%% OLD VERSION
%%For any $u\in\mathbb{R}$, and $t\in [0,T]$, $u \longmapsto {\displaystyle \frac{d\Psi_t^{'}(u )}{d\Psi_t^{''}(0)}}\exp\left\{\Psi_T(u )-\Psi_t(u )\right\}$ is $\mu$-integrable.
%\end{Hyp}
%NADIA: DONNER UNE CONDITION SUFFISANTE EN TERME DE ``KALLENBERG'' SUR LA VALIDITE DES HYPOTHESES 1. 2. 

\subsection{Explicit elementary Kunita-Watanabe decomposition}
%%%%%%%%%%%%%%%%%%%%%%%%%%%%%%%%%%%%%%%%%%%%%%%%
By Proposition~\ref{prop:SCPII}, $X$ admits the following semimartingale 
decomposition, $X_t=A_t+M_t$, where
\begin{eqnarray}  A_t=-i\Psi^{'}_t(0)\ 
\quad\textrm{and}\quad \left\langle M\right\rangle_t=-\Psi^{''}_t(0)\label{97bis}\ .\end{eqnarray}
\begin{propo}\label{propo521}
Let $H=f(X_T)$ where $f$ is of the form~(\ref{ES1}).
% We suppose that the PII $X$ satisfies Assumptions
%\ref{HypoFSPII:1}
% and~\ref{HypoFSPII:2}. 
Then, $H$ admits the decomposition
%%%%%%%%%%%%%%%%%%%%%%%%
%Debut Ajout Nadia 20/06/2011
%%%%%%%%%%%%%%%%%%%%%%%%%%%
%Let $X$ be a PII process with finite second order moments verifying Assumption~\ref{ass:KallCond}. 
%Let $\mu$ be a finite complex measure defined on $\R$ and $f$ its Fourier transform. 
%Then we have the following decomposition of the $\mathcal{F}_T$-measurable random variable $H={f}(X_T)$, 
$$
H=\E[H]+\int_0^TZ_t dM_t+O_T\ ,
$$
%with $Z_t={k_{t,T}}(X_{t^-})$ where ${k_{t,T}}$ is a complex valued function defined for all $x\in \R$ as the following Fourier transform 
where $O$ is a square integrable $(\mathcal{F}_t)-$martingale such that $\langle O,M\rangle=0$  and
%and the following explicit expressions for the conditional expectation and the process $(Z_t)$ hold. 
$$
V_t:=\E[H\,\vert\,\mathcal{F}_t]=e_{t,T}(X_t)\ , \quad\textrm{and}\quad  Z_t={d_{t,T}}(X_{t^-})\ ,
$$
where the complex valued functions $e_{t,T}$ and $d_{t,T}$ are defined for all $x\in \R$ by  
\begin{equation}
\label{eq:KW}
{e_{t,T}}(x):=\widehat{ \epsilon_{t,T}\mu}(x)=\int_{\R}  \epsilon_{t,T}(u)\mu(du)e^{iux} 
%\end{equation}
\quad\textrm{and}\quad 
%\begin{equation}
%\label{eq:KW:h}
 {d_{t,T}}(x):=\widehat{ \delta_t \epsilon_{t,T}\mu}(x)=\int_{\R}  \delta_t(u) \epsilon_{t,T}(u)\mu(du)e^{iux} \, ,
\end{equation}
with $\epsilon_{t,T}$ being  defined in~\eqref{eq:et} and $\delta_t$ being defined in~\eqref{eq:dt}. 
Moreover, $\E[\int_0^TZ_s^2 d\langle M\rangle_s]<\infty \ .$
%%%%%%%%%%%%%%%%%%%%%%%%%%
%Fin Ajout Nadia 29/06/2011
%%%%%%%%%%%%%%%%%%%%%%%
%\begin{equation} \label{h}
% \left  \{ 
%\begin{array}{ccc}  
%V_t &=& V_0 + \int_0^t Z_s dM_s + O_t  \\
%V_T &=& H \ ,
%\end{array} \right.      
%\end{equation}
%with the following properties.
%\begin{enumerate}
%\item $H=V_T$ where $(V_t)_{t\in[0,T]}$ 
% is an $(\mathcal{F}_t)$- square integrable martingale defined by 
%$V_t = \mathbb{E}(H \vert \mathcal{F}_t)
% = \int_\R V_t(u) d\mu(u), \quad t \in [0,T],$ where for any $u\in \R$ 
%we have 
%\begin{eqnarray} \label{V}
%V_t(u) = e^{iu X_t} \exp\left\{\Psi_T(u )-\Psi_t(u )\right\}. 
%\end{eqnarray}
%\item For all $t\in [0,T], \quad Z_t = \int_\R Z_t(u) d\mu(u)$
%where  for any $u\in \R, \ t \in [0,T]$
%%\end{enumerate}
%%\end{propo}
%%\end{document}
%\begin{equation} \label{Z}
%Z_t(u) = i e^{iu X_{t-}} 
%\frac{d\left(\Psi^{'}_t(u )-\Psi^{'}_t(0)\right)}{d\Psi^{''}_t(0)}
%\exp\left\{  \Psi_T(u )-\Psi_t(u )\right \};
%\end{equation}
%\item $\mathbb{E}\left[\int_0^T Z_s^2d\left\langle M\right\rangle_s\right]<\infty\,.$
%\item $O$ is a square integrable $(\mathcal{F}_t)$-martingale
% such that $\left\langle O,M\right\rangle=0.$ 
%%\item $H=V_T$ where $(V_t)_{t\in[0,T]}$ 
%% is an $(\mathcal{F}_t)$-martingale defined by 
%%$$V_t = \mathbb{E}(H \vert \mathcal{F}_t)
%% = \int_\R Z_t(u) d\mu(u), \quad t \in [0,T],$$ where for any $u\in \R$ 
%%we have 
%%\begin{eqnarray}
%%V_t(u) = e^{iu X_t} \exp\left\{\Psi_T(u )-\Psi_t(u )\right\} 
%%\label{V}\ .\end{eqnarray}
%\end{enumerate}
\end{propo}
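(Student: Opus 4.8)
The plan is to reduce the decomposition of $H=f(X_T)$ to an ``elementary'' Kunita-Watanabe decomposition of the family of complex martingales
\[
N_t(u):=\exp\big(iuX_t-\Psi_t(u)\big),\qquad u\in\R,
\]
each of which is an $(\shf_t)$-martingale by item 2.\ of Remark~\ref{RemarkRCher}, and then to integrate these decompositions against the weight $\exp(\Psi_T(u))\,\mu(du)$. First I would invoke Lemma~\ref{lem:expectation} to write $V_t=\E[H\,\vert\,\shf_t]=e_{t,T}(X_t)=\int_\R \exp(\Psi_T(u))\,N_t(u)\,\mu(du)$, the interchange of $\E[\cdot\,\vert\,\shf_t]$ and $\int_\R\cdots\mu(du)$ being legitimate since $\vert\exp(\Psi_T(u))N_t(u)\vert=\vert \epsilon_{t,T}(u)\vert\le 1$ and $\mu$ is finite. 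Because $f=\hat\mu$ is bounded by $\vert\mu\vert(\R)$, $H$ is bounded, hence $V$ is a bounded, in particular square integrable, martingale.

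The core step is the elementary covariation identity $\langle N(u),M\rangle_t=\int_0^t N_{s^-}(u)\,\delta_s(u)\,d\langle M\rangle_s$, which I would establish without Ito's formula, using only the independent increments. Writing $N_t(u)=N_s(u)\,\tilde N_{s,t}(u)$ and $M_t=M_s+(M_t-M_s)$, with $\tilde N_{s,t}(u)=\exp(iu(X_t-X_s)-(\Psi_t(u)-\Psi_s(u)))$ and $M_t-M_s$ independent of $\shf_s$, a direct computation of $\E[\tilde N_{s,t}(u)(M_t-M_s)]$ via $\E[(X_t-X_s)e^{iu(X_t-X_s)}]=-i(\Psi_t'(u)-\Psi_s'(u))e^{\Psi_t(u)-\Psi_s(u)}$ together with $M_t-M_s=(X_t-X_s)+i(\Psi_t'(0)-\Psi_s'(0))$ yields $\E[N_t(u)M_t\,\vert\,\shf_s]-N_s(u)M_s=N_s(u)\int_s^t\delta_r(u)\,d\langle M\rangle_r$, where I use $da_t=d\langle M\rangle_t=-d\Psi_t''(0)$ (the normalisation following Proposition~\ref{LSCP1}) and the definition~\eqref{eq:dt} of $\delta$. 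Since $\langle M\rangle$ and $\delta(u)$ are deterministic and $N(u)$ is a martingale, this says precisely that $N(u)M-\int_0^\cdot N_{s^-}(u)\delta_s(u)\,d\langle M\rangle_s$ is a martingale, i.e.\ the claimed identity. Consequently $N_t(u)=1+\int_0^t \delta_s(u)N_{s^-}(u)\,dM_s+O_t(u)$ with $O(u):=N(u)-1-\int_0^\cdot\delta_s(u)N_{s^-}(u)\,dM_s$ a square integrable martingale orthogonal to $M$, which is the elementary Kunita-Watanabe decomposition of $N(u)$.

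It remains to integrate this decomposition in $u$. Multiplying by $\exp(\Psi_T(u))$ and integrating against $\mu$, the constant term gives $\int_\R\exp(\Psi_T(u))\mu(du)=e_{0,T}(0)=\E[H]=V_0$ (as $X_0=0$, $\Psi_0\equiv0$); a stochastic Fubini theorem moves $\int_\R\cdots\mu(du)$ through $\int_0^\cdot\cdots dM_s$ and, using $\exp(\Psi_T(u))N_{s^-}(u)=\epsilon_{s,T}(u)e^{iuX_{s^-}}$, identifies the integrand as $\int_\R\delta_s(u)\epsilon_{s,T}(u)e^{iuX_{s^-}}\mu(du)=d_{s,T}(X_{s^-})=Z_s$; and the remainder $O_t:=\int_\R\exp(\Psi_T(u))O_t(u)\mu(du)$ is a martingale with $\langle O,M\rangle=\int_\R\exp(\Psi_T(u))\langle O(u),M\rangle\,\mu(du)=0$. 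Evaluating at $t=T$ gives $H=V_T=\E[H]+\int_0^T Z_s\,dM_s+O_T$. Finally, the integrability $\E[\int_0^T Z_s^2\,d\langle M\rangle_s]<\infty$ follows from orthogonality, $\langle V\rangle=\int_0^\cdot Z_s^2\,d\langle M\rangle_s+\langle O\rangle$, together with $\E[\langle V\rangle_T]<\infty$, which holds because $V$ is bounded.

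I expect the main obstacle to be the justification of the interchanges of the $\mu(du)$-integration with the covariation bracket and, above all, with the stochastic integral $dM_s$: the stochastic Fubini requires an $L^2(d\langle M\rangle\otimes P)$ bound on $\exp(\Psi_T(u))\delta_s(u)N_{s^-}(u)$, and the very definition of $d_{s,T}=\widehat{\delta_s\epsilon_{s,T}\mu}$ presupposes that $\delta_s\epsilon_{s,T}\mu$ is a finite measure. Both rest on controlling $\int_\R\vert\delta_s(u)\vert\,\vert\mu\vert(du)$; since $\vert\delta_s(u)\vert\le 2\sqrt{2}\,(1+\vert u\vert)$ by~\eqref{eq:major:dt} and $\vert\epsilon_{s,T}(u)\vert\le1$, this is where a mild integrability property of $\mu$ (for instance a finite first moment, as in Lemma~\ref{lem:derivative}) enters, and where the estimates of Remark~\ref{RemarkRCher4} and Corollary~\ref{LSCP} must be combined carefully.
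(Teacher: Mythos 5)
Your overall architecture is the same as the paper's: an elementary Kunita--Watanabe decomposition of the exponential martingales (your $N_t(u)$ is just $e^{-\Psi_T(u)}V_t(u)$ in the paper's notation), obtained via the independent-increments computation of $\E[e^{iu(X_t-X_s)}(M_t-M_s)]$ rather than Ito's formula, followed by integration against $\mu(du)$ and a Fubini argument for the bracket $\langle O,M\rangle$. For fixed $u$ your computation is correct and matches the paper's.

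The genuine gap is exactly at the point you flag as ``the main obstacle'' and then resolve incorrectly. You propose to control $\int_\R\vert\delta_s(u)\epsilon_{s,T}(u)\vert\,\vert\mu\vert(du)$ by combining the separate bounds $\vert\delta_s(u)\vert\leq 2\sqrt{2}(1+\vert u\vert)$ and $\vert\epsilon_{s,T}(u)\vert\leq 1$, and you acknowledge this forces a finite first moment on $\mu$. But the Proposition makes no such assumption: $\mu$ is only a finite complex measure (the first-moment hypothesis appears in Lemma~\ref{lem:derivative}, which is used only for the alternative representation in Remark~\ref{rque:biblio2}, not here). The paper closes this gap with a \emph{combined} estimate: writing $\gamma_s(u)=u^2c_s+4\int_\R(\sin\frac{ux}{2})^2F_s(dx)$, one has $\vert\delta_s(u)\vert^2\leq 2\gamma_s(u)$ while $\vert\epsilon_{s,T}(u)\vert^2\leq\exp\{-\int_s^T\gamma_r(u)\,d(-\Psi''_r(0))\}$, so that
\begin{equation*}
\int_0^T\vert\delta_s(u)\epsilon_{s,T}(u)\vert^2\,d(-\Psi''_s(0))
\leq 2\left(1-\exp\Bigl\{-\int_0^T\gamma_r(u)\,d(-\Psi''_r(0))\Bigr\}\right)\leq 2,
\end{equation*}
\emph{uniformly in} $u$: the polynomial growth of $\delta$ is exactly absorbed by the Gaussian-type decay of $\epsilon_{t,T}$ through the identity $\int_0^T\gamma e^{-\int_s^T\gamma}=1-e^{-\int_0^T\gamma}$. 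This uniform bound is what makes $Z$ well defined in $L^2(M)$ (via Cauchy--Schwarz, since $\mu$ is finite) and what legitimizes every Fubini interchange, with no moment condition on $\mu$. Your closing argument for $\E[\int_0^TZ_s^2\,d\langle M\rangle_s]<\infty$ via $\langle V\rangle=\int_0^\cdot Z_s^2\,d\langle M\rangle_s+\langle O\rangle$ is also not available before this bound is in hand, since $\int_0^\cdot Z\,dM$ and hence $O$ are not yet defined; the $L^2$ estimate must come first, as it does in the paper.
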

In particular, 
%\begin{enumerate}
%\item 
$V_0=\mathbb{E}[H]\,.$
%\item $\mathbb{E}\left[\int_0^TZ_s^2d\left\langle M\right\rangle_s\right]<\infty\,.$
%\end{enumerate}
\begin{remarque}\label{rque:biblio1}
We remark that the (SC) condition is not a restriction
when $X$ is a martingale, since it is obviously fulfilled.
This would correspond to the classical Kunita-Watanabe statement.
\end{remarque}
%%%%%%%%%%%%%%%%%%%%%%%
%Debut Ajout Nadia 
%%%%%%%%%%%%%%%%%%%%
\begin{remarque}\label{rque:biblio2}
In~\cite{nunno}, they obtain a similar decomposition valid for a different class of random variables. On  one hand
 their class is more general, allowing for path dependent payoffs,
 on the other hand it requires some stronger regularity assumptions
 since $H$ is supposed to be in the Malliavin-Sobolev space  $\D^{1,2}$. In our case, their regularity assumption on the payoff function
 could be relaxed by applying the \textit{derivative operator} $ \delta_t$ after applying the \textit{expectation operator} $ \epsilon_{t,T}$ whereas in~\cite{nunno}, they take the conditional expectation of the payoff Malliavin derivative.  \\
This trick of switching the conditional expectation and the differentiation is also implicitly used in the approach developed in~\cite{JS00} or similarly in~\cite{ContTankov}.  Their approach relies on the application of Ito's lemma on the conditional expectation $\E[H\vert\mathcal{F}_t]$ and therefore  requires some regularity conditions. Basically  the conditional expectation should be once
 differentiable w.r.t. the time variable and twice
 differentiable w.r.t. the space variable with continuous partial derivatives. On the other hand, their method is valid for a large class of martingale processes $X$.  
%in~\cite{ContTankov}. In particular, applying our result when $X$ a L\'evy martingale,  allows us to recover an expression of the variance optimal hedging strategy, similar to the one given in~\cite{ContTankov} at equation~$(29)$.
\\
Besides, our approach only relies on the martingale property of $(e^{iuX_t-\Psi_t(u)})_{0\leq t\leq T}$. Hence, $X$ is not required to be martingale as in~\cite{nunno},~\cite{JS00} or~\cite{ContTankov} and no specific regularity assumption on the payoff function or on the conditional expectation are required. Our approach is unfortunately restricted to additive processes. However, this specific setting allows to go one step further in providing an explicit expression for both the Follmer-Schweizer decomposition and the variance optimal strategy, as we will see below. Moreover, the expression of the Kunita-Watanabe decomposition derived in this specific case is quasi-explicit involving a simple Fourier transform. 

If $ \epsilon_{t,T}\mu$ admits a first order moment, then taking
$\eta = \epsilon_{t,T}\mu$, in Lemma~\ref{lem:derivative}, the conditional expectation function $e_{t,T}$  is differentiable w.r.t. the variable $x$
%by using Lemma~\ref{lem:derivative}
and we obtain 
\begin{equation}
\label{eq:KW:h:reg}
 {d_{t,T}}(x):=\widehat{ \delta_t \epsilon_{t,T}\mu}(x)=\int_{\R}  \delta_t(u) \epsilon_{t,T}(u)\mu(du)e^{iux}
 =c_t e'_{t,T}(x)+\int_{\R}\big (e_{t,T}(x+y)-e_{t,T}(x)\big )yF_t(dy)\ .
\end{equation}
%
%In~\cite{JS00} CONTINUER LE DEVELOPPEMENT. NADIA 
%IL FAUDRAIT DISCUTER L'APPROCHE DE JACOD -MELEARD-PROTTER
The following lemma gives a condition on characteristics $c_t$ and $F_t$ ensuring the
 differentiability of $e_{t,T}$. 
% We remark that those are required for the validity of Proposition \ref{propo521}
%
\begin{lemme}
\label{lem:expectationReg}
Let $X$ be a PII process with finite second order moments such that there exist positive reals $\beta \in (0,2)$ and $\alpha$ verifying 
\begin{equation}
\label{eq:KallCond}
\inf_{t\in[0,T)} \left (
c_t+\int_{\vert x\vert \leq \vert u\vert ^{-1}} x^2\,F_t(dx)\right )
\geq \alpha\, \vert u\vert^{-2+\beta} \ , \quad\textrm{when} \ \vert u\vert \,\rightarrow\,\infty \ .
\end{equation}
Let $\mu$ be a finite complex measure defined on $\R$ and $f$ its Fourier transform  such that for all $x \in \R$, 
$f(x)=\hat{\mu}(x)\ .$ 
Then, for all $t\in [0,T)$, $ \epsilon_{t,T}\mu$ is a finite complex measure with finite moments of all orders and all the  derivatives of all orders 
of
  $x\mapsto e_{t,T}(x):=\widehat{ \epsilon_{t,T}\mu}(x)$
are  well-defined and bounded. 
\end{lemme}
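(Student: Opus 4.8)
The plan is to show that, for fixed $t\in[0,T)$, the map $u\mapsto\epsilon_{t,T}(u)$ decays faster than any polynomial as $\vert u\vert\to\infty$; everything else then follows easily. Indeed, since $\mu$ is a finite complex measure, such decay gives $\int_\R\vert u\vert^n\vert\epsilon_{t,T}(u)\vert\,\vert\mu\vert(du)<\infty$ for every $n\ge 0$, which is precisely the statement that $\epsilon_{t,T}\mu$ is a finite complex measure with finite moments of all orders. The same bound legitimates differentiation under the integral sign in $e_{t,T}(x)=\int_\R e^{iux}\epsilon_{t,T}(u)\mu(du)$: inductively one obtains $e_{t,T}^{(n)}(x)=\int_\R(iu)^n e^{iux}\epsilon_{t,T}(u)\mu(du)$, with $\vert e_{t,T}^{(n)}(x)\vert\le\int_\R\vert u\vert^n\vert\epsilon_{t,T}(u)\vert\,\vert\mu\vert(du)$, a bound independent of $x$. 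Hence all derivatives of $e_{t,T}$ are well defined and bounded.

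To produce the decay I would first compute the modulus of $\epsilon_{t,T}$. By \eqref{eq:et} and the L\'evy--Khintchine representation \eqref{eq:LevyKhinAbs} (with $da_s$ replaced by $d(-\Psi_s''(0))$), one has $\vert\epsilon_{t,T}(u)\vert=\exp\big(\mathrm{Re}(\Psi_T(u)-\Psi_t(u))\big)$, and since the drift and compensator terms of $\eta_s(u)$ are purely imaginary, only the Gaussian and cosine parts survive in the real part:
$$
\mathrm{Re}\big(\Psi_T(u)-\Psi_t(u)\big)=-\int_t^T\Big[\tfrac{u^2}{2}c_s+\int_\R\big(1-\cos(ux)\big)F_s(dx)\Big]\,d(-\Psi_s''(0))\ \le\ 0.
$$

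The heart of the argument is a lower bound on the bracketed integrand. Using the elementary inequality $1-\cos\theta\ge\kappa\,\theta^2$, valid for all $\vert\theta\vert\le 1$ with a universal constant $\kappa\in(0,\tfrac12]$, I restrict the inner integral to $\vert x\vert\le\vert u\vert^{-1}$ (so that $\vert ux\vert\le 1$) and get
$$
\tfrac{u^2}{2}c_s+\int_\R\big(1-\cos(ux)\big)F_s(dx)\ \ge\ \kappa\,u^2\Big(c_s+\int_{\vert x\vert\le\vert u\vert^{-1}}x^2\,F_s(dx)\Big).
$$
Assumption \eqref{eq:KallCond} bounds the right-hand side below by $\kappa\alpha\vert u\vert^{\beta}$, $d(-\Psi_s''(0))$-a.e., for $\vert u\vert$ large. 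Integrating over $[t,T]$ gives $\vert\epsilon_{t,T}(u)\vert\le\exp(-\kappa\alpha\,C_{t,T}\,\vert u\vert^{\beta})$ for large $\vert u\vert$, where $C_{t,T}=\langle M\rangle_T-\langle M\rangle_t=-\Psi_T''(0)+\Psi_t''(0)>0$ for $t<T$ (strict positivity amounts to $X$ genuinely accumulating variance on $[t,T]$, since otherwise $X_T-X_t$ would be deterministic and, by Proposition~\ref{PSCPIII}, $X$ constant on $[t,T]$, a degenerate case tacitly excluded). As $\beta>0$, this stretched-exponential decay is faster than any polynomial.

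The step I expect to be the main obstacle is this last estimate: one must couple the truncation $\vert x\vert\le\vert u\vert^{-1}$ of \eqref{eq:KallCond} precisely with the range $\vert ux\vert\le 1$ on which the quadratic lower bound for $1-\cos$ is available, and apply the hypothesis $d(-\Psi_s''(0))$-a.e. so that it survives integration against the possibly singular measure $d(-\Psi_s''(0))$, together with the strict positivity of $C_{t,T}$. The reduction to differentiation under the integral sign and the moment computation are then routine applications of dominated convergence.
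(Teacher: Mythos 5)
Your proof is correct and follows essentially the same route as the paper: bound $\vert\epsilon_{t,T}(u)\vert$ by $\exp\{-\mathrm{const}\cdot u^2\int_t^T(c_s+\int_{\vert x\vert\le \vert u\vert^{-1}}x^2F_s(dx))\,d(-\Psi_s''(0))\}$ via a quadratic lower bound on $1-\cos(ux)$ over $\vert ux\vert\le 1$, invoke \eqref{eq:KallCond} to get stretched-exponential decay $\exp(-c\vert u\vert^{\beta})$, and deduce finite moments and bounded derivatives by differentiation under the integral. The only (harmless) cosmetic difference is that the paper truncates at $\vert x\vert\le \pi/\vert u\vert$ using $\vert\sin\theta\vert\ge\tfrac{2}{\pi}\vert\theta\vert$, and your explicit remark that $\Psi_t''(0)-\Psi_T''(0)>0$ is needed is a point the paper leaves implicit.
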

\begin{remarque}\label{RKall}
When  $X$ is a L\'evy process,
 Assumption~(\ref{eq:KallCond}) implies the Kallenberg condition
stated in \cite{Kall81} ensuring the existence of a transition density
 for a L\'evy process $X$. 
\end{remarque}
\begin{proof}[Proof of Lemma~\ref{lem:expectationReg}]
We prove that $\int_{\R}u^p  \epsilon_{t,T}(u)\mu(du)<\infty$, for any 
nonnegative integer $p$. For this, we 
recall that Remark  \ref{RemarkRCher}   together with the lines below Proposition
\ref{LSCP1} say that
%use the following upper bound of  $\vert  \epsilon_{t,T}(u)\vert$ valid
 for all $u\in\R$ and $t \in [0,T]$
\begin{equation} \label{425bis}
\vert  \epsilon_{t,T}(u)\vert 
= 
\exp \left\{-\frac{u^2}{2}\int_t^T c_s\,d(-\Psi''_s(0))\right\}\,
\left \vert \, \exp \left \{\int_t^T \int_{\mathbb{R}}
(e^{iu x}-1-iu x\mathbf{1}_{|x|\leq 1})
\, F_s(dx) \,d(-\Psi''_s(0))\right \}\right \vert\ .
\end{equation}
Consider now the second exponential term on the right-hand side of the above equality; it gives 
\begin{eqnarray}
\label{eq:epsMajor}
\left \vert 
\exp \big \{\int_t^T \int_{\mathbb{R}} 
(e^{iu x}-1-iu x\mathbf{1}_{|x|\leq 1})
\, F_s(dx) \,d(-\Psi''_s(0))
\big \} \right \vert
&\le &
%\left \vert \exp \big \{\int_t^T \int_{\mathbb{R}} 
%-2(\sin\frac{ux}{2})^2+i(\sin ux-ux\mathbf{1}_{\vert x\vert \leq 1})
%\,F_s(dx) \,d(-\Psi''_s(0))
%\big \}\right \vert \nonumber \\
%&=&
\exp \big \{-2\int_t^T \int_{\mathbb{R}} 
(\sin\frac{ux}{2})^2
\, F_s(dx) \,d(-\Psi''_s(0))
\big \} \\
&\leq&
\exp \big \{-2\int_t^T \int_{\vert x\vert \leq \frac{\pi}{\vert u\vert}} 
(\sin\frac{ux}{2})^2
\,F_s(dx) \,d(-\Psi''_s(0))
\big \}\nonumber \\
&\leq& \exp \big \{-2\int_t^T \left (\frac{u}{\pi}\right)^2\,\int_{\vert x\vert \leq \frac{\pi}{\vert u\vert}} 
x^2
\, F_s(dx) \,d(-\Psi''_s(0))
\big \}\nonumber \ .
\end{eqnarray}
Hence, we conclude that for all $u\in\R$, 
\begin{equation}
\label{eq:major:etT}
\vert  \epsilon_{t,T}(u)\vert
\leq 
\exp \big \{-\frac{u^2}{2}\int_t^T \left [c_s+\frac{4}{\pi^2}\,\int_{\vert x\vert \leq \frac{\pi}{\vert u\vert}} 
x^2
\, F_s(dx) \right ]\,d(-\Psi''_s(0))
\big \}\ .
\end{equation}
Then by Assumption~(\ref{eq:KallCond}), there exists two positive reals $\alpha$ and $\beta$ such that
\begin{eqnarray*}
\vert  \epsilon_{t,T}(u)\vert
&\leq& 
 \exp\{-\alpha\,\left (\frac{u}{\pi}\right)^{\beta}\,(\Psi''_t(0)-\Psi''_T(0))\}\ , \quad\textrm{as} \ \vert u\vert \,\rightarrow\,\infty \ .
\end{eqnarray*}
Finally, we can conclude that under Assumption~(\ref{eq:KallCond}), for any nonnegative integer $p$, the complex measure $u^p \epsilon_{t,T}\mu$ is  finite with a bounded Fourier transform $g^{(p)}_{t,T}$, the $p$ order derivative of $e_{t,T}$. 
\end{proof}
%%%%%%%%%%%%%%%%%%%%%%%%%%%%%%%%%%%%%%%%%%%%%
%FIN AJOUT NADIA LE 2/11/2011
%%%%%%%%%%%%%%%%%%%%%%%%%%%%%%%%%%%%%%%%%%%%%

\end{remarque}

\begin{remarque}\label{KWOU}
%\label{rem:OU}
Notice that the explicit expression of the Kunita-Watanabe decomposition obtained in the case of an additive process  can be used to derive an explicit expression in the case where $X$ is an  Ornstein-Uhlenbeck process.
 Indeed if we consider
\begin{equation}
\label{eq:OU}
X_t=e^{-\alpha t} \tilde{X}_t\ ,
\end{equation}
for a given positive real 
%$\alpha>0$
$\alpha \in \R$ and additive process $\tilde X$. 
Consider a function $f$ satisfying condition~\eqref{ES1}.
We define now $\tilde f: [0,T] \times \R \rightarrow \R$
by $ \tilde f(t, \tilde x) = f(e^{-\alpha t} \tilde x),$ 
for all $\tilde x \in \R$, so that
$
f(X_T)=\tilde{f}(T,\tilde{X}_T)$.
%\ ,\quad \textrm{where for all }\ x\in\R\quad \tilde{f}(T,\tilde{x})=f(e^{-\alpha T}\tilde{x}) \ .
%$$
Then by application of Proposition~\ref{propo521}, we get 
$
f(X_T)=\E[f(X_T)]+\int_0^T\tilde{Z}^T_sd\tilde{M}_s+O_T\ ,
$ 
where $\tilde{M}$ is the martingale part of $\tilde{X}$. 
Now
$
dX_t
=-\alpha e^{-\alpha t} \tilde{X}_tdt +e^{-\alpha t} d\tilde{M}_t-ie^{-\alpha t}d\Psi'_t(0)\ ,
$
where $\Psi$ is the log-characteristic function of $\tilde X$.
By uniqueness of the  Doob-Meyer decomposition of the special semimartingale
$X$,  the martingale part of $X$ is $M_t=\int_0^t e^{-\alpha s} d\tilde{M}_s$
 and 
finally we deduce the Kunita-Watanabe decomposition 
\begin{equation}
\label{eq:KWX}
f(X_T)=\E[f(X_T)]+\int_0^T Z_s dM_s+O_T\ ,\quad \textrm{with}\ Z_t=e^{\alpha t} \tilde{Z}_t^T\ .
\end{equation}
This can be easily generalized when $\alpha t$ is replaced by $\alpha(t)$ a
bounded  deterministic function of $t$. 
\end{remarque}

\begin{proof}
% \textit{of Proposition~\ref{propo521}}
[\textbf{Proof of Proposition~\ref{propo521}}]
%
%\begin{description}
%\item{A)}

Lemma~\ref{lem:expectation} says that $V_t:=\E\left[H\vert \mathcal{F}_t\right]=\int V_t(u)d\mu(u)$ where 
\begin{equation}
\label{V}
V_t(u)=\epsilon_{t,T}(u)e^{iuX_t}=\E\left[\exp\left(iuX_T\right)\vert \mathcal{F}_t\right]\ .
\end{equation}
Having observed that $\vert\epsilon_{t,T}(u)\vert \leq 1$, for all $u\in \R$, we get
\begin{equation}\label{C10}
\sup_{t\leq T,u\in \R}\E\left[\vert V_t(u)\vert ^2\right]\leq 1\ .
\end{equation}
This implies that $V$ is an $(\mathcal{F}_t)$-square integrable martingale since $\mu$ is finite. We define
\begin{equation}\label{C11}
Z_t=\int_\R Z_t(u)d\mu(u)\ ,
\end{equation}
where
\begin{equation}
\label{ZZS}
Z_t(u)=\delta_{t,T}(u)\epsilon_{t,T}(u)e^{iuX_{t-}}\ .
\end{equation}
In the second part of this proof, we will show that
\begin{equation}\label{C12}
\E\left[\int_0^T\vert Z_t(u)\vert ^2d(-\Psi^{''}_t(0))\right]\leq 2\ .
\end{equation}
This implies in particular that process $Z$ in \eqref{C11} is well defined and $\E\left[\int_0^T\vert Z_s\vert ^2d\langle M\rangle_s\right]<\infty\ .$ 
We define 
\begin{equation}
\label{eq:O}
O_t:=V_t-V_0-\int_0^tZ_sdM_s\ .
\end{equation}
By additivity $O$ is an $(\mathcal{F}_t)$-square integrable martingale. It remains to prove that $\langle O,M\rangle=0$. For this, we will show that
$$
\langle V,M\rangle =\int_0^t Z_sd\langle M\rangle_s \ ,
$$
which will follow from the fact that $V_tM_t-\int_0^tZ_sd\langle M\rangle_s $ is an $(\mathcal{F}_t)$-martingale. 
In order to establish the latter, we prove that for every $0<r<t$,
\begin{equation}\label{C13}
\E\left[\left(V_tM_t-V_rM_r-\int_r^tZ_sd\langle M\rangle_s\right)R_r\right]=0
\end{equation}
for every bounded  $(\mathcal{F}_r)$-measurable variable
 $R_r$. Taking into account \eqref{C10} and \eqref{C12}, by Fubini's theorem,
 the left hand side of \eqref{C13} equals
\begin{equation}\label{C15}
\int d\mu(u)\E\left[\left(V_t(u)M_t-V_r(u)M_r-\int_r^tZ_s(u)d(-\Psi^{''}_s(0))\right)R_r\right]\ .
\end{equation}
It remains now to show that the expectation in \eqref{C15} vanishes for $d\mu(u)$ almost all $u$. Below, we will show that 
$$
V_t(u)M_t-\int_0^tZ_s(u)d(-\Psi^{''}_s(0))
$$
is an $(\mathcal{F}_t)$-martingale, for $d\mu(u)$ almost all $u$.
This implies that \eqref{C15} is zero.

%%%%%%%% AJOUT PARTIE NADIA

We evaluate $\mathbb{E}[{V_t}M_t|\mathcal{F}_s]$.
 Since ${V}$ and $M$ are $(\mathcal{F}_t)$-martingales,
 using the property of independent increments we get
\begin{eqnarray*}
\mathbb{E}[{V_t(u)}M_t|\mathcal{F}_s]
&=&\mathbb{E}[{V_t(u)}M_s|\mathcal{F}_s]+\mathbb{E}[ {V_t(u)}(M_t-M_s)|
\mathcal{F}_s]\ ,\\\\
&=&M_s {V_s(u)}+ {V_s(u)}\mathbb{E}[\exp\{iu (X_t-X_s)-(\Psi_t(u )-\Psi_s(u ))
\}(M_t-M_s)]\ ,\\\\
&=&M_s {V_s(u)}+ {V_s(u)}e^{-(\Psi_t(u )-\Psi_s(u ))}
\mathbb{E}[e^{iu (X_t-X_s)}(M_t-M_s)]\ .
\end{eqnarray*}
Consider now the expectation on the right hand side of the above equality:
\begin{eqnarray*}
\mathbb{E}[e^{iu (X_t-X_s)}(M_t-M_s)]&=&\mathbb{E}[e^{iu (X_t-X_s)}(X_t-X_s)]+\mathbb{E}[e^{iu (X_t-X_s)}i(\Psi_t^{'}(0)-\Psi_s^{'}(0))]\ ,\\\\
&=&-i\frac{\partial}{\partial u }\mathbb{E}[e^{iu (X_t-X_s)}]+i(\Psi_t^{'}(0)-\Psi_s^{'}(0))\mathbb{E}[e^{iu (X_t-X_s)}]\ ,\\\\
&=&-ie^{\Psi_t(u )-\Psi_s(u )}(\Psi^{'}_t(u )-\Psi^{'}_s(u ))
+i(\Psi_t^{'}(0)-\Psi_s^{'}(0))e^{\Psi_t(u )-\Psi_s(u )}\ .\end{eqnarray*}
Consequently,
\begin{eqnarray*}
\mathbb{E}[ {V_t(u)}M_t|\mathcal{F}_s]
&=&
M_s {V_s(u)}-i {V_s(u)}(\Psi^{'}_t(u )-\Psi^{'}_s(u ))+i {V_s(u)}
(\Psi_t^{'}(0)-\Psi_s^{'}(0))\\ \\
&=&M_s {V_s(u)}-i {V_s(u)}\left(\Psi^{'}_t(u )-\Psi_t^{'}(0)-
(\Psi^{'}_s(u )-\Psi_s^{'}(0))\right)\ .
\end{eqnarray*}
This implies that $\left(  {V_t(u)}M_t+i {V_t(u)}(\Psi^{'}_t(u )-
\Psi_t^{'}(0))\right )_t$ is an $(\mathcal{F}_t)$-martingale.
Then by integration by parts, 
$$
 {V_t(u)}(\Psi^{'}_t(u )-\Psi_t^{'}(0))=\int_0^t {V_s(u)}\,
d(\Psi^{'}_{s}(u )-\Psi_{s}^{'}(0))+\int_0^t(\Psi^{'}_{s}(u )-
\Psi_{s}^{'}(0))d {V_s}(u)\ .
$$
The proof is concluded once we have shown \eqref{C12}.

By Cauchy-Schwarz inequality 
%
%%%%%%%%%%%%%%%%%%%%%%%%%%%%%%%%%%%%%%%%%%%%%%%%%%%%%%%%%%%%%%%
\begin{eqnarray*}
\E \left( \int_0^T Z^2_s \,d(-\Psi''_s(0)) \right)
&=&\E \left(\int_0^T \left \vert \int_{\R}  \delta_s(u)
 \epsilon_{s,T}(u) e^{iuX_s}\mu(du)  \right \vert^2\,d(-\Psi''_s(0))\right) \\
&\leq&
\vert \mu\vert (\R)\, \int_0^T \int_{\R} \vert  \delta_s(u) \epsilon_{s,T}(u)\vert ^2\,\vert  \mu\vert (du) \,d(-\Psi''_s(0))\\
&=&
\vert \mu\vert (\R)\,\int_{\R} \vert  \mu\vert (du) \int_0^T \vert  \delta_s(u) \epsilon_{s,T}(u)\vert ^2  \,d(-\Psi''_s(0))\ .
\end{eqnarray*}

%INSERTION NADIA
%%%%%%%%%%%%%%%%%%%%%%%%%%%%%%%%%%%%%%%%%%%%%%%%%%%%%%%%%%%%%%%%%%%%%%%%%%%%%%%%%%%%%
Let us consider now, for a given real $u$ the integral
 w.r.t. to the time parameter in the right-hand side of the above inequality.
 Using inequalities (\ref{eq:epsMajor}) and \eqref{425bis}, we obtain 
$$
\vert \epsilon_{s,T}(u)\vert ^2 
\leq -\frac{1}{2}
\exp \left \{ \int_s^T -\Big [u^2 c_r
+4 \int_{\R} (\sin \frac{ux}{2})^2
\,F_r(dx)\Big ]\,d(-\Psi^{''}_r(0))
\right \}\ .
$$
On the other hand, by \eqref{eq:dtbis} and \eqref{eq:major} we have 
\begin{eqnarray*}
\vert \delta_s(u)\vert ^2
&\leq & 
2 \left [u^2c_s^2 + 4 \Big (\int_{\R} \vert x \sin \frac{ux}{2}\vert  \,F_s(dx) \Big )^2\right ]\\
&\leq &
2\left [u^2c_s^2 + 4 \int_{\R}  x^2\,F_s(dx)\,\int_{\R} (\sin \frac{ux}{2})^2  \,F_s(dx) \right ]\ .
\end{eqnarray*}
Since $c_s\leq 1$, $\int_\R x^2 F_s(dx)\leq 1$, by item 2.
 of Corollary \ref{LSCP}, we finally get
%previous expression is bounded by $2\left[u^2c_s+4\int_\R \left(\sin\left(\frac{ux}{2}\right)\right)^2F_S(dx)\right]$. Hence we get
\begin{equation}
\label{eq:deltaMajor}
\vert \delta_s(u)\vert ^2 \leq  2 \gamma_s(u) 
%2\left [u^2c_s + 4 \int_{\R} (\sin \frac{ux}{2})^2  \,F_s(dx)  
% \right ] 
\end{equation}
where 
\begin{equation} \label{434gamma}
\gamma_s(u)=u^2c_s + 4 \int_{\R} (\sin \frac{ux}{2})^2  \,F_s(dx).
\end{equation}
 Then
\begin{eqnarray} \label{B11}
\int_0^T \vert  \delta_s(u) \epsilon_{s,T}(u)\vert ^2  \,d(-\Psi^{''}_s(0))
&\leq&
2\int_0^T \gamma_s(u)
\exp \left \{ \int_s^T -\gamma_r(u)\,d(-\Psi^{''}_r(0))
\right \}
d(-\Psi^{''}_s(0))  \nonumber \\
&& \\
&=&
2\left (1-\exp(\big \{ \int_0^T -\gamma_r(u)\,d(-\Psi^{''}_r(0))
\big \}\right ) \le 2_ .\nonumber
\end{eqnarray}
\end{proof}

\begin{example}
We take $X=M=W$ the classical Wiener process with canonical filtration ($\shf_t$). We have $\Psi_s(u )=-\frac{u ^2s}{2}$ so that $\Psi^{'}_s(u )=-u s$ and $\Psi^{''}_s(u )=-s$. So $Z_s(u)=iu V_s(u)$. We recall that 
$V_s=\mathbb{E}[\exp(iu W_T)|\mathcal{F}_s]=\exp(iu W_s)\exp\left(-u ^2\frac{T-s}{2}\right)\ .$
In particular,  $V_0=\exp(-\frac{u ^2T}{2})$ and so
$\exp(iu W_T)=i\int_0^Tu \exp(iu W_s)
\exp\left(-u ^2\frac{T-s}{2}\right) dW_s + 
\exp(-\frac{u ^2T}{2}).$
In fact that expression is classical and 
it can be derived from Clark-Ocone formula. In fact, if $D$ is the usual Malliavin derivative then $\E\left(D_t\exp(iuW_T)|\shf_t\right)=iu\exp(iuW_s-\frac{u^2}{2}(T-s))$.
\end{example}

\subsection{Explicit F\"ollmer-Schweizer decomposition}

We are now able to evaluate the FS decomposition of
 $H=f(X_T)$ where $f$ is given by~(\ref{ES1}). First, we state the following lemma. 
 
%%%%%%%%%%%%%%%%%%%%%%%%%%%%%%%%%%%%%%%%%%%%%%%%%%%%%%%%%%%%%%%%%%%%%%%%%%%%%%%%%%%
%PEUT ETRE A ENLEVER AU VU DU LEMME SUIVANT IL FAUDRA ALORS MODIFIER LA PREUVE DE FS
%First, we have to introduce a supplementary  hypothesis.
%\begin{Hyp} 
%\label{ASupeta} 
%$ {\displaystyle \sup_{u \in {\rm supp} \mu, t \in [0,T]} Re\left (-i\int_0^t \delta_s(u)d\Psi^{'}_{s}(0)\right )< \infty }\ ,\quad$
%\item $\sup_{u \in 2 supp(\mu)} ||\Psi_{.}(u)||_{var}<\infty$
%where $\delta_s(u)$ is given by~\eqref{eq:dt} i.e. $\delta_s(u):=i\frac{d(\Psi'_s(u)-\Psi'_s(0))}{d\Psi^{''}_s(0)}\ .$ 
%\end{enumerate}
%\end{Hyp}
%FIN  A ENLEVER
%%%%%%%%%%%%%%%%%%%%%%%%%%%%%%%%%%%%%%%%%%
%PEUT-ETRE A ENLEVER
%We denote by $\shc(\mu)$ the convex hull of $2supp(\mu)$.
% 
%FIN ENLEVER
%%%%%%%%%%%%%%%%%%%%%%%%%%%%
\begin{lemme}
\label{lem:FS}
For all $s,t\in [0,T)$, 
\begin{equation}
\label{eq:lem:FS}
\int_s^t Re \big(i\delta_r(u)d\Psi'_{r}(0)\big )
\leq 
K_T+\int_s^t \int_{\mathbb{R}} \left (\sin \frac{ux}{2}\right)^2 F_r(dx)\,d(-\Psi''_r(0)),
\end{equation}
where the process $K$ was defined in Definition~\ref{D29}.
\end{lemme}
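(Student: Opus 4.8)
The plan is to reduce the left-hand side to a single real integral against the positive measure $d(-\Psi''_r(0))=d\langle M\rangle_r$, and then to apply a carefully chosen Young inequality that lets me invoke the normalization $c_r+\int_\R x^2F_r(dx)=1$ from item 2 of Corollary~\ref{LSCP}. First I would record two facts about orientation. Since $\E[X_r]=-i\Psi'_r(0)$ is real and $\Psi''_r(0)=-\mathrm{Var}(X_r)$ is real, the increment $d\Psi'_r(0)$ is purely imaginary while $d(-\Psi''_r(0))$ is a genuine positive measure; moreover by \eqref{AMPII} the Radon--Nikodym density satisfies
\[
\frac{d\Psi'_r(0)}{d(-\Psi''_r(0))}=i\alpha_r,\qquad \alpha_r\in\R .
\]
Hence $i\,\delta_r(u)\,d\Psi'_r(0)=-\alpha_r\,\delta_r(u)\,d(-\Psi''_r(0))$, and taking real parts gives $\mathrm{Re}\big(i\delta_r(u)\,d\Psi'_r(0)\big)=-\alpha_r\,\mathrm{Re}\big(\delta_r(u)\big)\,d(-\Psi''_r(0))$.

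Next I would compute $\mathrm{Re}(\delta_r(u))$ from the explicit expression \eqref{eq:dtbis}, namely $\delta_r(u)=iuc_r+\int_\R x(e^{iux}-1)F_r(dx)$. The term $iuc_r$ is purely imaginary and contributes nothing, so $\mathrm{Re}(\delta_r(u))=\int_\R x(\cos ux-1)F_r(dx)$. Using $\cos ux-1=-2(\sin\frac{ux}{2})^2$ this becomes $-2\int_\R x(\sin\frac{ux}{2})^2F_r(dx)$, and therefore
\[
\mathrm{Re}\big(i\delta_r(u)\,d\Psi'_r(0)\big)=2\alpha_r\int_\R x\Big(\sin\tfrac{ux}{2}\Big)^2F_r(dx)\,d(-\Psi''_r(0)).
\]
This identity is the exact analogue of the real-part computations already carried out in the proof of Proposition~\ref{propo521}, so it is routine.

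The decisive step is the pointwise inequality, valid for every fixed $x$,
\[
2\alpha_r x\Big(\sin\tfrac{ux}{2}\Big)^2\le \alpha_r^2 x^2\Big(\sin\tfrac{ux}{2}\Big)^2+\Big(\sin\tfrac{ux}{2}\Big)^2,
\]
which is nothing but $(\alpha_r x-1)^2\ge0$ multiplied by the nonnegative weight $(\sin\frac{ux}{2})^2$. Integrating in $x$ against $F_r$ and using $(\sin\frac{ux}{2})^2\le1$ together with $\int_\R x^2F_r(dx)\le1$ (item 2 of Corollary~\ref{LSCP}) yields $\alpha_r^2\int_\R x^2(\sin\frac{ux}{2})^2F_r(dx)\le\alpha_r^2$, so the integrand is bounded by $\alpha_r^2+\int_\R(\sin\frac{ux}{2})^2F_r(dx)$. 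Integrating over $r\in[s,t]$ against $d(-\Psi''_r(0))$ and bounding $\int_s^t\alpha_r^2\,d(-\Psi''_r(0))\le\int_0^T\alpha_r^2\,d\langle M\rangle_r=K_T$ (Definition~\ref{D29}) gives exactly \eqref{eq:lem:FS}.

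The main obstacle is purely the choice of Young splitting: the naive bound $2\alpha_r x\le\alpha_r^2+x^2$ produces the term $\int_\R x^2(\sin\frac{ux}{2})^2F_r(dx)$, which is \emph{not} controlled by the right-hand side $\int_\R(\sin\frac{ux}{2})^2F_r(dx)$ unless $x^2\le1$. One must instead pair one factor of $\sin\frac{ux}{2}$ with $\alpha_r x$ and the other with $1$, so that the surviving $x^2$ sits inside the sub-probability measure $x^2F_r(dx)$ and can be absorbed by the unit bound of Corollary~\ref{LSCP}; everything else is bookkeeping.
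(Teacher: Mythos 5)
Your proof is correct and follows essentially the same route as the paper: both reduce $\mathrm{Re}\big(i\delta_r(u)\,d\Psi'_r(0)\big)$ to $2\alpha_r\int_\R x(\sin\frac{ux}{2})^2F_r(dx)\,d(-\Psi''_r(0))$ with $\alpha_r$ the structure-condition density, then split off $\alpha_r^2$ (absorbed into $K_T$) and control the remainder via the normalization $c_r+\int_\R x^2F_r(dx)\le 1$ of Corollary~\ref{LSCP}. The only cosmetic difference is that you apply the quadratic inequality pointwise in $x$ (via $(\alpha_r x-1)^2\ge 0$ weighted by $(\sin\frac{ux}{2})^2$), whereas the paper applies $2AB\le A^2+B^2$ at the level of the integrals and then uses Cauchy--Schwarz on $\big(\int_\R x(\sin\frac{ux}{2})^2F_r(dx)\big)^2$; both yield the same bound $\alpha_r^2+\int_\R(\sin\frac{ux}{2})^2F_r(dx)$.
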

%POUR LE MOMENT CE LEMME NE NOUS SUPPRIME PAS ENCORE $\eqref{ASupeta}$.
%LA DEMONSTRATION EST EN COURS PAR NADIA.

%
\begin{proof} Using \eqref{eq:dtbis} and \eqref{3.1bis}, 
with a slight abuse of notation, it follows
\begin{eqnarray*}
Re \big (i\delta_r(u)d\Psi'_{r}(0)\big )&=&
-\left ( b_r+\int_{\vert x\vert > 1} x F_r(dx)\right ) \,
\int_{\mathbb{R}}\left ( x(\cos (ux) -1)F_r(dx)\right ) \,d(-\Psi''_r(0))\\
&=&2\,\left ( b_r+\int_{\vert x\vert > 1} x F_r(dx)\right ) \, \int_{\mathbb{R}}\left ( x\big (\sin \frac{ux}{2} \big)^2 F_r(dx)\right ) \,d(-\Psi''_r(0))\\
&\leq &\left [ \Big ( b_r+\int_{\vert x\vert > 1} x F_r(dx)\Big )^2  +\Big ( \int_{\mathbb{R}} x\big (\sin \frac{ux}{2} \big)^2 F_r(dx)\Big )^2 \right ]\,d(-\Psi''_r(0))\ .
\end{eqnarray*}
Indeed, by the (SC) condition using Proposition~\ref{prop:SCPII}
and Corollary \ref{LSCP} 1., we obtain that for all $t\in [0,T)$, 
$$
\int_s^t\Big ( b_r+\int_{\vert x\vert > 1} x F_r(dx)\Big )^2 \,d(-\Psi''_r(0)) 
=
\int_s^t \left \vert \frac{d\Psi'_r(0)}{d(-\Psi''_r(0)} \right 
\vert ^2 d(-\Psi''_r(0))
=
K_t - K_s \le K_T,
$$
which is a deterministic bound.
Finally, recalling that $\int_{\mathbb{R}} x^2 F_r(dx)\leq 1$ 
by Corollary \ref{LSCP} 2.,
 Cauchy-Schwarz inequality implies 
$$
\Big ( \int_{\mathbb{R}} x\big (\sin \frac{ux}{2} \big)^2 F_r(dx)\Big )^2
\leq \int_{\mathbb{R}} \big (\sin \frac{ux}{2} \big)^2 F_r(dx)
\ .
$$

\end{proof}

 % A MODIFIER OU ENLEVER REMARK SUIVANTE\\ 
%\begin{remarque} \label{rhmCO}
%Notice that the integral ${\displaystyle \int_0^t \delta_s(u)d\Psi^{'}_{s}(0)}$ is well-defined, because for any $u \in  \R$, taking into account Assumption %~\ref{HypoFSPII:1}.
%$$ \int_0^t \delta_s(u)d\Psi^{'}_{s}(0) = i\int_0^t  \frac{d(\Psi_s^{'}(u )-\Psi_s{'}(0))}
%{d(\Psi^{''}_s(0))}  
%\frac{d(\Psi_s{'}(0))}
%{d(\Psi^{''}_s(0))}
%d\Psi^{''}_{s}(0)  
%$$
%is bounded by Cauchy-Schwarz.
  %\end{enumerate}
%\end{remarque}
 % A MODIFIER OU ENLEVER \\

%A MODIFIER OU ENLEVER PROPOSITION SUIVANTE\\ 

%\begin{propo}\label{RASupeta}
%Item 2. in previous Assumption is verified for instance if
%\begin{equation}\label{ERAS}
%\sup_{u \in \shc(\mu)} ||\Psi^{(k)}_{ds}(u)||<\infty
%\end{equation}
%holds at least for one $k \in \{0,1,2\}$.
%\end{propo}
%\begin{proof}
%We first observe that $u \mapsto \exp\left(Re(\Psi_t(u))\right)$ is bounded for $(t,u)\in [0,T]\times \R$, 
%\begin{itemize}
%\item obviously if (\ref{ERAS}) holds with $k=0$, the result follows.
%\item if (\ref{ERAS}) holds with $k=2$ (resp. $k=1$), the result follows because
%$
%\nu_{s}(u,v)=\int_{[0,1]^2}dadb \Psi^{''}_s(au+bv)
%$
%(resp. $\int_{[0,1]}da \Psi^{'}_s(au+v)-\int_{[0,1]}da \Psi^{'}_s(au)$). 
%\end{itemize}
%\end{proof}
%FIN A MODIFIER OU ENLEVER

%
\begin{thm}\label{thmCO}
%Under 
%%the assumptions of Proposition~\ref{propo521} and 
%Assumption \ref{ASupeta},
%%Assumption \ref{HypoFSPII:2},
The FS decomposition of $H=f(X_T)$ where $f$ satisfies~(\ref{ES1})
is the following
\begin{equation}
\label{E3240}
H_t=H_0+\int_0^t \xi_sdX_s+L_t \quad\textrm{with}\quad H_T=H\ , 
\end{equation}
where
%%%%%%%%%%%%%%%%%%%%%%%%%%%
%Debut Modif Nadia
%%%%%%%%%%%%%%%%%%%%%%%%%%
\begin{equation}
\label{eq:FS:ltT1}
\xi_t={k_{t,T}}(X_{t^-})\ ,\quad \textrm{with}\quad {k_{t,T}}(x)=\int_{\R} e^{ i\int_t^T  \delta_s(u)\,d\Psi'_s(0)} \, \delta_t(u) \epsilon_{t,T}(u)    e^{iux}\mu(du)  \ ,
\end{equation}
and
\begin{equation}
\label{eq:FS:htT1}
H_t={h_{t,T}}(X_{t})\ ,\quad\textrm{with}\quad {h_{t,T}}(x)=\int_{\R} e^{ i\int_t^T  \delta_s(u)\,d\Psi'_s(0)} \, \epsilon_{t,T}(u) e^{iux} \mu(du)  \ ,
\end{equation}
with $\epsilon_{t,T}$  defined in~\eqref{eq:et} and $\delta_t$ defined in~\eqref{eq:dt}. 
%
%%%%%%%%%%%%%%%%%%%%%%%%%%
%Fin Modif Nadia
%%%%%%%%%%%%%%%%%%%%%%%%%%
%and
%\begin{equation} \label{EA1}
%H_t = \int_{\mathbb{R}}H_t(u)\mu(du)\ ,  \quad
% \xi_t= \int_{\mathbb{R}}\xi_t(u )\mu(du),
%\end{equation}
%%
%where
%%
%\begin{eqnarray} \label{EA2}
%\xi_t(u ) = i\frac{d(\Psi^{'}_t(u )-\Psi^{'}_t(0))}
%{d\Psi^{''}_t(0)}\,H_{t-}(u) \quad \textrm{and } \quad 
%H_t(u )=\exp\left\{\eta(u ,T)-\eta(u ,t)+\Psi_T(u )-\Psi_t(u )
%\right\}\,e^{iuX_t}\ .
%%\\
%%\eta(u ,t)&=& i\int_0^t\frac{d\Psi^{'}_s(0)}{d\Psi^{''}_s(0)}
%%d\left(\Psi^{'}_s(u )-\Psi^{'}_s(0)\right)\ ,
%%\label{EA3}
% \nonumber
%\end{eqnarray}
\end{thm}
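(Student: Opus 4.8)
The plan is to build the FS decomposition by first producing a candidate process $\xi$ and candidate hedging value $H_t = h_{t,T}(X_t)$, and then verifying that the residual $L_t := H_t - H_0 - \int_0^t \xi_s\,dX_s$ is a square integrable martingale strongly orthogonal to $M$. By the uniqueness established in Theorem~\ref{ThmExistenceFS} (which applies since $K$ is deterministic, hence bounded), it suffices to exhibit \emph{one} decomposition of the required form. As in the Kunita--Watanabe proof of Proposition~\ref{propo521}, I would work $u$ by $u$: write everything as an integral against $\mu(du)$ and reduce the martingale/orthogonality statements to the single-exponential building blocks $V_t(u)=\epsilon_{t,T}(u)e^{iuX_t}$, then integrate back using Fubini, which is legitimate once the relevant $L^2$ bounds are in place.

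First I would set up the candidate. The key structural guess is that the FS and KW integrands differ only by the exponential corrector $e^{i\int_t^T \delta_s(u)\,d\Psi'_s(0)}$; this factor is exactly what is needed to absorb the drift $A$ of $X$ into the decomposition, compensating for the fact that we now integrate against $dX=dM+dA$ rather than $dM$. Concretely, I would introduce $\xi_t(u) = e^{i\int_t^T \delta_s(u)\,d\Psi'_s(0)}\,\delta_t(u)\epsilon_{t,T}(u)e^{iuX_{t-}}$ and $H_t(u) = e^{i\int_t^T \delta_s(u)\,d\Psi'_s(0)}\,\epsilon_{t,T}(u)e^{iuX_t}$, so that $\xi_t=\int_\R \xi_t(u)\mu(du)$ and $H_t=\int_\R H_t(u)\mu(du)$, with $H_T=f(X_T)=H$ because the exponential corrector equals $1$ at $t=T$ and $\epsilon_{T,T}\equiv 1$.

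Next I would carry out the core computation for fixed $u$. Using the martingale property of $(e^{iuX_t-\Psi_t(u)})$ (Remark~\ref{RemarkRCher}~2), the independent-increments identity for $\E[e^{iu(X_t-X_s)}(M_t-M_s)]$ already established inside the proof of Proposition~\ref{propo521}, and the recursion $\Psi_t(u)=\Psi_s(u)+\Psi_{t-s}(u)$, I would show that $L_t(u) := H_t(u)-H_0(u)-\int_0^t \xi_s(u)\,dX_s$ is an $(\shf_t)$-martingale orthogonal to $M$. The drift correction is the heart of it: differentiating $H_t(u)$ (via integration by parts, as in the KW proof) produces a $dM$-term with integrand $\delta_t(u)\epsilon_{t,T}(u)e^{iuX_{t-}}$ together with a finite-variation term, and the role of the factor $e^{i\int_t^T\delta_s(u)\,d\Psi'_s(0)}$ is precisely to make that finite-variation term match $\xi_t(u)\,dA_t=\xi_t(u)\alpha_t\,d\langle M\rangle_t$, using $A_t=-i\Psi'_t(0)$ and the definition~\eqref{eq:dt} of $\delta_t$. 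Once the decomposition holds for each $u$, orthogonality $\langle L(u),M\rangle=0$ follows from the same independent-increments orthogonality used for $O$ in Proposition~\ref{propo521}.

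The main obstacle I anticipate is the $L^2$ control needed to (i) justify Fubini when integrating the fixed-$u$ identities against $\mu(du)$, and (ii) guarantee $\xi\in\Theta$ and that $L$ is genuinely square integrable. Unlike the KW case, the integrands now carry the extra factor $e^{i\int_t^T\delta_s(u)\,d\Psi'_s(0)}$, whose modulus is $\exp\big(-\int_t^T Re(i\delta_s(u)\,d\Psi'_s(0))\big)$ and is \emph{not} bounded by $1$ a priori. This is exactly where Lemma~\ref{lem:FS} enters: it bounds $\int_s^t Re(i\delta_r(u)\,d\Psi'_r(0))$ by $K_T$ plus a $\sin^2$-integral term, so that the modulus of the corrector is controlled by $e^{K_T}$ times a factor that combines with the decay of $|\epsilon_{s,T}(u)|^2$ from~\eqref{eq:epsMajor}. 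I would therefore redo the Cauchy--Schwarz estimate~\eqref{B11} from the proof of Proposition~\ref{propo521} with this extra corrector inserted, using Lemma~\ref{lem:FS} to dominate it; since $K$ is deterministic, the constant $e^{K_T}$ comes out of the expectation and the telescoping bound in~\eqref{B11} again yields a finite integral. This gives $\E[\int_0^T\xi_s^2\,d\langle M\rangle_s]<\infty$ and the integrability of $L$, completing the verification.
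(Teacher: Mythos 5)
Your plan reproduces the paper's proof essentially step for step: the same $u$-by-$u$ candidate $H_t(u)=e^{i\int_t^T\delta_s(u)\,d\Psi'_s(0)}\epsilon_{t,T}(u)e^{iuX_t}$ and $\xi_t(u)$, the same integration by parts reducing to the Kunita--Watanabe building blocks $V_t(u)$ and $O(u)$ so that $L_t(u)=\int_0^t e^{i\int_r^T\delta_s(u)\,d\Psi'_s(0)}\,dO_r(u)$ is orthogonal to $M$, the same use of Lemma~\ref{lem:FS} combined with \eqref{eq:epsMajor} to control the modulus of the exponential corrector by $e^{K_T}$ times a telescoping factor, and the same Fubini argument to integrate against $\mu(du)$. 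The only point you leave implicit is that the decomposition so constructed is a priori complex-valued, and one must invoke uniqueness of the real F\"ollmer--Schweizer decomposition (Theorem~\ref{ThmExistenceFS}) applied to $H-\overline{H}=0$ to conclude that $H_t$, $\xi$ and $L$ are real --- a routine final step that the paper spells out.
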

\begin{proof}
%\begin{proof}
Let us introduce the following notations, which will correspond to the expression~\eqref{eq:FS:htT1} for $H_t$ and~\eqref{eq:FS:ltT1} for $\xi_t$ in the 
case where $\mu=\delta_u$ for a given real $u$:
\begin{equation}
\label{EA2}
H_t(u):=e^{ i\int_t^T  \delta_s(u)\,d\Psi'_s(0)} \, \epsilon_{t,T}(u)e^{iuX_t}\quad\textrm{and}\quad 
\xi_t(u):=e^{ i\int_t^T  \delta_s(u)\,d\Psi'_s(0)} \, \delta_t(u) \epsilon_{t,T}(u)e^{iuX_{t{^-}}}\ .
\end{equation}
\begin{enumerate}
\item 
%
%We  prove now that $\xi$ defined in~\eqref{eq:FS:ltT1} is a well defined square integrable process. 
%Taking into account Assumption \ref{ASupeta}, $|H(u)|$ is uniformly bounded in $u$ and $t$. 
We first introduce the process $H$.
Taking into account Lemma~\ref{lem:FS} together with
 inequalities (\ref{eq:epsMajor}) and \eqref{425bis},
 $|H(u)|$ is uniformly bounded in $u$ and $t$. Indeed
\begin{eqnarray}
\label{eq:H:Major}
\vert H_t(u)\vert &=&\vert e^{ i\int_t^T  \delta_r(u)\,d\Psi'_r(0)} \vert \, 
\vert \epsilon_{t,T}(u)\vert \nonumber \\
&\leq &
\exp\left \{K_T+\int_t^T \int_{\mathbb{R}} \left (\sin \frac{ux}{2}\right)^2 F_r(dx)\,d(-\Psi''_r(0))\right \} \nonumber \\
&&
\exp\left \{\int_t^T -\frac{1}{2}\Big [u^2 c_r+4\int_{\mathbb{R}} \left (\sin \frac{ux}{2}\right)^2 F_r(dx)\Big ]\,d(-\Psi''_r(0))\right \} \nonumber \\
%&\leq &
%\exp(K_T)\,\exp\left \{-\frac{1}{4}\int_t^T \Big [u^2 c_r+4\int_{\mathbb{R}} \left (\sin \frac{ux}{2}\right)^2 F_r(dx)\Big ]\,d(-\Psi''_r(0))\right \}\\
& \leq & \exp(K_T)\, \exp\left \{-\frac{1}{4}\int_t^T \gamma_r(u) \,d(-\Psi''_r(0))
\right \} \\
&\leq &
\exp(K_T), \nonumber 
\end{eqnarray}
where $\gamma$ was defined in \eqref{434gamma}. \\
By Fubini's, 
\begin{equation}\label{DefH}
H_t = \int_\R H_t(u) d\mu(u)
\end{equation}
 is well-defined and it equals the expression in \eqref{eq:FS:htT1}. 
We  prove now that $\xi$ defined in~\eqref{eq:FS:ltT1} is a well-defined square integrable process.
Using the above bounds~(\ref{eq:H:Major}) and~(\ref{eq:deltaMajor}) we obtain
\begin{eqnarray}
\label{eq:xiMajor}
|\xi_t(u)|^2
&\leq&
\vert \delta_t(u)\vert ^2 \vert H_t(u)\vert^2  \nonumber \\
&\leq & 2\gamma_t(u)\,\exp(2K_T)\,\exp\left \{-\frac{1}{2} \int_t^T \gamma_r(u) \,d(-\Psi''_r(0))\right \},
\end{eqnarray}
%where $\gamma$ was  defined in \eqref{434gamma}. 
%denoting again $\gamma(s,u)=u^2c_s + 4 \int_{\R} (\sin \frac{ux}{2})^2  \,F_s(dx)$ for all $s\in [0,T)$, 
which finally implies 
\begin{equation}
\label{DEA1}
\E\left(\int_0^T|\xi_t(u)|^2d\left\langle M\right\rangle_t\right )
\leq 4\left (1-\exp\left \{ \int_t^T \gamma_r(u) \,d(-\Psi''_r(0))\right \} \right)\leq 4\ .
\end{equation}
 Hence, $\xi(u)\in \Theta := L^2(M)$ for any $u\in \R$.
 (\ref{DEA1}), \eqref{B11} yield 
%and Assumption \ref{HypoFSPII:2} and item 1. of
% \ref{ASupeta} yield
\begin{equation}
\label{DEA2}
\int_\R d\mu(u)\int_0^T\E(|\xi_t(u)|^2)d\left\langle M\right\rangle_t <\infty\ .
\end{equation}
The above upper bound implies that $\xi_t = \int_\R \xi_t(u) d\mu(u)$
is well-defined and $\xi\in \Theta $ 
and it equals the expression \eqref{eq:FS:ltT1}.
% L^2(M)$. 
%Consequently for any $u\in\R$, we have $\xi(u),\xi \in \Theta$ because of Proposition \ref{lemmaTheta}. Moreover 
Consequently $\xi\in L^2(M)=\Theta$ and using  stochastic and classical Fubini's we get
\begin{equation}\label{EEXi}
\int_0^t \xi_s dX_s =\int_{\R} d\mu(u)\int_0^t \xi_s(u) dX_s\ .
\end{equation}
\item We go on with the proof of Theorem \ref{thmCO} showing the following:\\
a) $L_t=H_t-H_0-\int_0^t\xi_sdX_s$ is an eventually complex valued square integrable martingale;\\
b) $\left\langle L,M\right\rangle=0$ where $M$ is the martingale part of the special semimartingale $X$.
\item We first establish a) and b) for the case $\mu$ is the Dirac measure at some fixed 
%=\delta_u$
 $u\in\R$. We will show that
%[\textbf{Proof of Theorem~\ref{thmCO}}]
%We define $(H_t)$ by $(\ref{EA1})$ and $(\ref{EA2})$. 
%Using Fubini's theorem, with the help of Assumption \ref{ASupeta}, we reduce the problem to show that
\begin{eqnarray}
\label{EEDecH}
H_t(u)=H_0(u )+\int_0^t  \xi_s(u)dX_s+L_t(u )
\quad \textrm{with}\quad H_T(u )=\exp(iu X_T)\ ,\end{eqnarray}
for fixed $u \in\mathbb{R}$
where $L(u)$ is a square integrable martingale and 
$\langle L(u),M \rangle  = 0$.
% where $M$ is the martingale part of the
%special semimartingale $X$. 
% Let $(H_t)=(H(u )_t)$ as in $(\ref{EA2})$, 
Notice that by relation (\ref{EA2}), 
$H_t(u)= e^{i\int_t^T\delta_s(u)d\Psi'_s(0)} V_t(u)$ with $V_t(u )=e^{iu X_t}\epsilon_{t,T}(u)$, 
as introduced in~\eqref{V}. Integrating by parts, gives
%
%%%%%%%%%%%%%%%%%
\begin{eqnarray}
\label{EEDecV}
H_t(u)=H_0(u) + \int_0^t e^{i\int_r^T\delta_s(u)d\Psi'_s(0)} V_r(u)
\big (-i\delta_r(u)d\Psi'_r(0)\big )+\int_0^t e^{i\int_r^T \delta_s(u)d\Psi'_s(0)} dV_r(u)\ .\end{eqnarray}
We denote again by $Z(u)$ the expression provided by $(\ref{ZZS})$. 
We observe that 
$$
\xi_t(u)=e^{i\int_t^T \delta_s(u)d\Psi'_s(0)}Z_t(u).
$$
We recall that 
\begin{eqnarray}\label{EEDecO}
dV_r(u)=Z_r(u)dM_r+dO_r(u)=Z_r(u)(dX_r-dA_r)+dO_r(u)\ ,
\end{eqnarray}
where $A$ is given by~$(\ref{97bis})$ and $O(u)$ is a square
 integrable martingale  strongly 
orthogonal to $M$. 
% i.e. $\left\langle M,O(u)\right\rangle_.=0$. 
Replacing (\ref{EEDecO}) in (\ref{EEDecV}) yields
\begin{eqnarray*}
H_t(u )&=&H_0(u )+L_t(u )+\int_0^t e^{i\int_r^T\delta_s(u)d\Psi'_s(0)} Z_r(u )dX_r\\
&+& i\int_0^t e^{i\int_r^T\delta_s(u)d\Psi'_s(0)}Z_r(u )d\Psi'_{r}(0)
-i\int_0^t e^{i\int_r^T\delta_s(u)d\Psi'_s(0)}V_r(u )\delta_r(u)d\Psi'_r(0) \\
&=& H_0(u )+L_t(u) + \int_0^t \xi_s(u) dX_s,
\end{eqnarray*}
where
\begin{equation}
\label{EA12}
L_t(u)=\int_0^t e^{i\int_r^T\delta_s(u)d\Psi'_s(0)} dO_r(u).
\end{equation}
%is a square integrable martingale strongly orthogonal to $M$. 
$L(u)$ is a local martingale which is also a square integrable martingale
because
$ \int_0^T e^{2Re(i \int_t^T \delta_s(u) d\Psi'_s(0)} d\langle O \rangle_t   $            is finite 
taking into account Lemma \ref{lem:FS}.

Since $O(u)$ is strongly orthogonal with respect to $M$,
then $L(u)$ has the same property.

%EST-CE QUE LA PARTIE QUI SUIT EST BIEN NECESSAIRE ?

%To show the strong orthogonality 
%we need to choose $\delta$ so that
%$\int_0^tZ_r(u ) e^{-i\int_r^T\delta_s(u)d\Psi'_s(0)}s\Psi'_{r}(0)=-i\int_0^t e^{-i\int_r^T\delta_s(u)d\Psi'_s(0)} V_r(u )\delta_r(u)d\Psi'_r(0)\ .$
%This requires
%$$
%\delta_r(u)=i
%\frac{d(\Psi'_r(u )-\Psi_r{'}(0))}{d(\Psi''_r(0))}\ .
%$$
%????????????????????????????????????????????????????

%
%So we naturally define $\eta$ as in \eqref{EA3} in order to establish (\ref{EEDecH}).
\item We treat now the general case discussing the points a) and b) in item 2.  $(\ref{EEXi})$ and the Definition of $H$ 
show that
\begin{equation}\label{EEL1}
L_t:=H_t-H_0-\int_0^t\xi_sdX_s
\end{equation}
fulfills
\begin{equation}\label{EEL2}
\int_{\R}  L_t(u)d\mu(u) =L_t,
\end{equation}
for every $t \in [0,T]$.
Let $0\leq s < t \leq T$ and $R_s$ a bounded $\shf_s$-measurable random variable.
Using~\eqref{eq:H:Major},~\eqref{DEA1} and Cauchy-Schwarz we obtain
\begin{eqnarray} \label{EstB}
\E[(\vert L_t(u)\vert ^2)&=& \E\Big (\left \vert H_t(u)-H_0(u)-\int_0^t \xi_r(u)dX_r\right \vert ^2 \Big)\nonumber\\
%&\leq & 2\E (\vert H_t(u)\vert ^2)+4\E (\vert H_0(u)\vert ^2)+4\E \left (\int_0^t \vert \xi_r(u)\vert ^2 d(-\Psi''_r(0))\right )^2\\
%&\leq& 6\exp(2K_T)+16\ .
&\leq&2 \E\left(\vert H_t(u)\vert ^2\right)+4\E\left(\vert H_0(u)\vert ^2\right)+8\left(\E\left(\int_0^t \xi_r(u)dM_r\right)^2+\E\left(\int_0^t \xi_r(u)\alpha_rd [M]_r\right)^2\right)\nonumber\\
&\leq&2 \E\left(\vert H_t(u)\vert ^2\right)+4\E\left(\vert H_0(u)\vert ^2\right)+8\left(1+K_T\right)\E\left(\int_0^t \vert \xi_r(u)\vert ^2d(-\Psi^{''}_r(0))\right)\nonumber\\
&\leq&6\exp(2K_T)+32(1+K_T).
\end{eqnarray}

\begin{enumerate}
\item By \eqref{EstB}, we observe that
$\E \left( \int_\R d\mu(u) \vert L_t(u)\vert \right) <\infty$.
 Fubini's,~(\ref{EEL2}) and the fact that $L(u)$ is an $(\shf_t)$-martingale give $\E[L_tR_s]=\E[L_sR_s]$. Therefore $L$ is an $(\shf_t)$-martingale. For every $t \in [0,T]$, $(L_t)$ is a square integrable because of $(\ref{EEL1})$ and by additivity.
\item 
By item 3. $L(u) M$ is an $(\shf_t)$-local martingale. Moreover $L(u)$ and $M$ are square integrable martingales.
By Cauchy-Schwarz and Doob inequalities, it follows that $E(\sup_{t\in [0,T]} \vert L_t(u) M_t \vert)$
is finite. Consequently $L(u)M$ is indeed an  $(\shf_t)$-martingale.
 It remains to show that $LM$ is an  $(\shf_t)$-martingale.
This is a consequence of Fubini's provided we can justify
%By Fubini's and \eqref{EstB} we get
\begin{equation}\label{PFub1}
\E(L_tM_tR_s)=\int_{\R}d\mu(u)\E[L_t(u)M_tR_s].
\end{equation}
For this we  need to estimate 
\begin{equation}\label{PMu1}
\int_{\R}d\mu(u)\E(|L_t(u)M_tR_s|)\ .
\end{equation}
By Cauchy-Schwarz the square of expression \eqref{PMu1} is bounded by
$$
\vert \vert R \vert \vert_{\infty}\int_\R d\mu(u)\E\left(\vert L_t(u)\vert^2\right)\int_\R d\mu(u)\E\left(\vert M_t\vert^2\right)\leq \vert \mu \vert(\R)^2\vert \vert R \vert \vert_{\infty}\E\left(\vert M_T\vert^2\right)\sup_{t\leq T;u\in \R}\E\left(\vert L_t(u)\vert^2\right)
$$
\eqref{PMu1}  follows by \eqref{EstB}.
%DEBUT : AJOUT DE NADIA LE 7/11/2011
%By~\eqref{EA12} and~\eqref{eq:O}, we have 
%$$
%L_t(u)=\int_0^t e^{-i\int_r^T \delta_s(u)d\Psi'_s(0)} dO_r(u)
%=\int_0^t e^{-i\int_r^T \delta_s(u)d\Psi'_s(0)} \eps_{r,T}(u)e^{iuX_r}
%$$
%By Jensen's, the square of \eqref{PMu1} is bounded by
%$$
%|\mu||\R|\int_{\R}d\mu(u)\E\left(|L_t(u)|^2M_t^2R_s^2\right)\leq ||\R||^2_\infty \E\left(|M_t|^2\right)|\mu|(\R)\E\left(|L_t(u)|^2\right)\ .
%$$
%Using~\eqref{eq:H:Major} and~\eqref{DEA1} we obtain
%\begin{eqnarray*}
%\E[(\vert L_t(u)\vert ^2)&=& \E\Big (\left \vert H_t(u)-H_0(u)-\int_0^t \xi_r(u)dX_r\right \vert ^2 \Big)\\
%&\leq & 2\E (\vert H_t(u)\vert ^2)+4\E (\vert H_0(u)\vert ^2)+4\E \left (\int_0^t \vert \xi_r(u)\vert ^2 d(-\Psi''_r(0))\right )^2\\
%&\leq& 6\exp(2K_T)+16\ .
%\end{eqnarray*}
%FIN : AJOUT DE NADIA LE 7/11/2011
This finally shows that the expression $(\ref{E3240})$ in the statement of Theorem \ref{thmCO} is an FS type decomposition which could be theoretically complex.
\end{enumerate}
\item It remains to prove that the decomposition is real-valued.
 Let $(H_0,\xi,L)$ and $(\overline{H_0},\overline{\xi},\overline{L})$
 be two FS decomposition of $H$. Consequently,
since $H$ and $(S_t)$ are real-valued, we have
\begin{eqnarray*}0=H-\overline{H}=(H_0-\overline{H}_0)+\int_0^T(\xi_s-\overline{\xi}_s)dX_s+(L_T-\overline{L}_T)\ ,\end{eqnarray*}
which implies that $0=Im(H_0)+\int_0^TIm(\xi_s)dX_s+Im(L_T)$.
 By Theorem \ref{ThmExistenceFS}, the uniqueness of the real-valued
 F\"ollmer-Schweizer decomposition yields
 that the processes $(H_t)$,$(\xi_t)$ and $(L_t)$ are real-valued.
\end{enumerate}
\end{proof}
%%%%%%%%%%%%%%%%%%%%%%%%%%%%%%%%%%
%%%%%%%%%%%%%%%%%%%%%%%%%%%%%%%%%%

%%%%%%%%%%%%%%%%%%%%%%%%%%%%%%%%%%%%%%%%%%%%%%%%%%
\section{The error in the quadratic minimization problem}\label{sec4}
%%%%%%%%%%%%%%%%%%%%%%%%%%%%%%%%%%%%%%%%%%%%%%%%%%
\setcounter{equation}{0}

%DEBUT CORRECTION FR - SG

Let $H\in \mathcal{L}^2$. The problem of minimization of the quadratic error given in Definition \ref{Defproblem2} is strongly connected with the FS decomposition. 
%Theorem 2.3 of \cite{S94} provides a quasi explicit solution in term of the FS decomposition of H.
 We evaluate now the error committed by the mean-variance hedging procedure.
%
%DEBUT : A ENLEVER. NADIA: VERIFIER LA VALIDITE DE \eqref{eq:C1}\\
%We formulate a final Assumption.
%\begin{Hyp} 
%\label{ASupeta1} 
% ${\displaystyle \sup_{u,v\in supp(\mu)}
%\int_0^T\exp\left[ Re(\Psi_T(u)-\Psi_s(u)+\Psi_T(v)-\Psi_s(v)) \right]
% d\vert \nu_{s}\vert (u,v)  <\infty}\ ,\quad$
%where we recall that by~(\ref{FEC2bis}),
%$
%\nu_{s}(u,v)=\Psi_s(u+v)-\Psi_s(u)-\Psi_s(v)\ .
%$
%\end{Hyp}
%FIN : A ENLEVER 
%
%In order to evaluate the error committed by the mean-variance hedging procedure described at section \ref{sec4}, 
In the following lemma, we first calculate $\left\langle L(u),L(v)\right\rangle$ for any $u,v\in \R$.
\begin{lemme}\label{PCrochetuv}
%With the same Assumptions as in Theorem \ref{thmCO} 
We have
%\begin{eqnarray}\label{CrochetLuv}
%\left\langle L(u),L(v)\right\rangle_t&=&\int_0^t\exp\{-(\nu_{T}(u,v)-\nu_{t}(u,v))\}\\
%%&&\exp\{\eta(u,T)-\eta(u,t)+\eta(v,T)-\eta(v,t)\}V_s(u+v)\Gamma_{ds}(u,v)\nonumber
%&&\exp\Big \{-i\int_t^T\big (\delta_r(u)+\delta_r(v)\big )d\Psi'_r(0) \Big \}V_s(u+v)\Gamma_{ds}(u,v)\nonumber
%\end{eqnarray}
\begin{equation}
\label{CrochetLuv}
%\left\langle L(u),L(v)\right\rangle_t=\int_0^t\frac{\epsilon_{t,T}(u)\epsilon_{t,T}(v)}{\epsilon_{t,T}(u+v)}
%e^{-i\int_t^T\big (\delta_r(u)+\delta_r(v)\big )d\Psi'_r(0)}V_s(u+v)d\Gamma_{s}(u,v)\ ,
\left\langle L(u),L(v)\right\rangle_t
=
\int_0^t\epsilon_{t,T}(u)\epsilon_{t,T}(v)
e^{i\int_t^T\big (\delta_r(u)+\delta_r(v)\big )d\Psi'_r(0)}d\Gamma_{s}(u,v)\ ,
\end{equation}
where $(V_t(u))$ is the exponential martingale defined by
$
V_t(u)=e^{iuX_t}\epsilon_{t,T}(u)\ , 
$
as introduced in~\eqref{V} and 
\begin{equation}
\label{eq:Gamma}
%\Gamma_{t}(u,v)=\nu_{t}(u,v)+\int_0^t\frac{d(\Psi'_s(v)-\Psi'_s(0))}{d\Psi^{''}_s(0)}\frac{d(\Psi'_s(u)-\Psi'_s(0))}{d\Psi^{''}_s(0)}(-d\Psi^{''}_s(0))
\Gamma_{t}(u,v)=\nu_{t}(u,v)-\int_0^t\delta_s(u)\delta_s(v)d(-\Psi^{''}_s(0))\ ,\quad \textrm{with}
\end{equation}
%and
%\begin{equation}
%\eta(u,t)=\int_0^t\frac{d(\Psi'_s(u)-\Psi'_s(0))}{d\Psi^{''}_s(0)}d\Psi'_s(0)
%\end{equation}
%with $\nu_t(u,v)=\Psi_t(u+v)-\Psi_t(u)-\Psi_t(v)$ as defined in~\eqref{FEC2bis}. 
%and $m:[0,T]\times \R \rightarrow \R$ defined by
\begin{equation}\label{FEC2bis}
\nu_t(u,v)=\Psi_t(u+v)-\Psi_t(u)-\Psi_t(v)
%\\
%m_t(u)&=&\nu_t(u,-u)=-2Re(\Psi_t(u))\nonumber
\end{equation}
\end{lemme}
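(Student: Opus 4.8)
The plan is to strip away, one at a time, the three layers through which $L(u)$ was built in the proof of Theorem~\ref{thmCO}, ending at the cross-variation of the orthogonal martingales $O(u)$ of the Kunita--Watanabe decomposition. Throughout I use the bilinear (non-conjugated) covariation $\langle\cdot,\cdot\rangle$ for complex martingales, i.e. the predictable finite-variation process making $N\tilde N-\langle N,\tilde N\rangle$ a local martingale; all the algebra below goes through verbatim in this convention. By \eqref{EA12}, $L_t(u)=\int_0^t\beta_r(u)\,dO_r(u)$ with the deterministic bounded integrand $\beta_r(u):=e^{i\int_r^T\delta_s(u)\,d\Psi'_s(0)}$, so bilinearity of the bracket in stochastic integrals gives at once
\[
\langle L(u),L(v)\rangle_t=\int_0^t\beta_r(u)\beta_r(v)\,d\langle O(u),O(v)\rangle_r,
\]
and the whole task reduces to evaluating $\langle O(u),O(v)\rangle$.

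First I would recover $\langle O(u),O(v)\rangle$ from the bracket of the exponential martingales $V(u),V(v)$ themselves. By \eqref{EEDecO} one has $V(u)=V_0(u)+\int_0^\cdot Z_s(u)\,dM_s+O(u)$ with $Z_s(u)=\delta_s(u)V_{s^-}(u)$ from \eqref{ZZS}, and $O(u),O(v)$ are strongly orthogonal to $M$. Expanding $\langle V(u),V(v)\rangle$ bilinearly, the two mixed terms carry a factor $d\langle M,O(\cdot)\rangle=0$ and drop out, leaving
\[
\langle O(u),O(v)\rangle_t=\langle V(u),V(v)\rangle_t-\int_0^t Z_s(u)Z_s(v)\,d\langle M\rangle_s,
\]
where $d\langle M\rangle_s=d(-\Psi''_s(0))$ by \eqref{97bis}.

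The heart of the proof, and the step I expect to be the main obstacle, is the explicit computation of $\langle V(u),V(v)\rangle$; here the independent-increments structure enters. I would write $V_t(w)=e^{\Psi_T(w)}N_t(w)$, where $N_t(w):=e^{iwX_t-\Psi_t(w)}$ is the martingale of Remark~\ref{RemarkRCher}.2, so that $V$ and $N$ differ only by a constant. The decisive identity is the factorization $N_t(u)N_t(v)=N_t(u+v)\,e^{\nu_t(u,v)}$ with $\nu_t(u,v)=\Psi_t(u+v)-\Psi_t(u)-\Psi_t(v)$ as in \eqref{FEC2bis}. Because $X$ has no fixed time of discontinuity, $t\mapsto\Psi_t(w)$ is continuous, so $t\mapsto\nu_t(u,v)$ is a deterministic, continuous, finite-variation function; integration by parts against the martingale $N(u+v)$ (the covariation with $e^{\nu(u,v)}$ vanishing since the latter is continuous of finite variation) isolates the predictable part and yields $\langle N(u),N(v)\rangle_t=\int_0^t N_{s^-}(u)N_{s^-}(v)\,d\nu_s(u,v)$, hence $\langle V(u),V(v)\rangle_t=\int_0^t V_{s^-}(u)V_{s^-}(v)\,d\nu_s(u,v)$.

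Finally I would assemble the pieces. Substituting $Z_s(u)=\delta_s(u)V_{s^-}(u)$ and recalling \eqref{eq:Gamma},
\[
\langle O(u),O(v)\rangle_t=\int_0^t V_{s^-}(u)V_{s^-}(v)\Big(d\nu_s(u,v)-\delta_s(u)\delta_s(v)\,d(-\Psi''_s(0))\Big)=\int_0^t V_{s^-}(u)V_{s^-}(v)\,d\Gamma_s(u,v),
\]
and since $\Gamma(u,v)$ is continuous the $s^-$ may be read as $s$. Reinserting this into the first display, with $\beta_s(u)\beta_s(v)=e^{i\int_s^T(\delta_r(u)+\delta_r(v))\,d\Psi'_r(0)}$ and $V_{s^-}(u)V_{s^-}(v)=e^{i(u+v)X_{s^-}}\epsilon_{s,T}(u)\epsilon_{s,T}(v)$, produces precisely the integrand of \eqref{CrochetLuv}. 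The residual technical work is routine bookkeeping: verifying the integrability that upgrades the local martingales to genuine martingales and guarantees the brackets exist, which is furnished by the uniform bounds \eqref{eq:H:Major}, \eqref{DEA1} and \eqref{B11} already in hand.
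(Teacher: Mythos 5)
Your proof is correct and follows essentially the same route as the paper's: both reduce the computation to $\langle V(u),V(v)\rangle$ via the strong orthogonality of $O(\cdot)$ and $M$, and both obtain $\langle V(u),V(v)\rangle_t=\int_0^t V_s(u)V_s(v)\,d\nu_s(u,v)$ from the factorization of the product of exponential martingales (the paper's $V(u)V(v)=V(u+v)R(u,v)$, your $N(u)N(v)=N(u+v)e^{\nu}$) followed by integration by parts against the continuous deterministic finite-variation factor. The only cosmetic difference is that you start directly from $L(u)=\int_0^{\cdot}\beta_r(u)\,dO_r(u)$ as in \eqref{EA12}, whereas the paper starts from the semimartingale expression $L(u)=H(u)-H_0(u)-\int_0^{\cdot}\xi_r(u)\,dX_r$ and identifies its martingale part by uniqueness of the canonical decomposition.
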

\begin{proof}
We have
\begin{eqnarray*}
L_t(u)&=&H_t(u)-H_0(u)-\int_0^t\xi_r(u)dX_r\\
%&=&V_t(u)\exp\{\eta(u,T)-\eta(u,t)\}-\exp\{\Psi_T(u)+\eta(u,T)\}-\int_0^t\xi_r(u)dM_r-\int_0^t\xi_r(u)dA_r
&=&V_t(u)e^{i\int_t^T \delta_s(u)d\Psi'_s(0)}-e^{\Psi_T(u)+i\int_0^T \delta_s(u)d\Psi'_s(0)}-\int_0^t\xi_r(u)dM_r-\int_0^t\xi_r(u)dA_r\ .
\end{eqnarray*}
Using integration by parts and the fact that $t\mapsto \int_0^t \delta_s(u)d\Psi'_s(0)$ is continuous (since $t\mapsto \Psi^{''}_t(0)$ is),
\begin{equation}\label{ENM1}
L_t(u)=\shm_t(u)+\sha_t(u)\ ,
\end{equation}
where
\begin{equation}\label{EMM}
\shm_t(u)=\int_0^t e^{i\int_r^T\delta_s(u)d\Psi'_s(0)}dV_r(u)-\int_0^t\xi_r(u)dM_r-e^{\Psi_T(u)+i\int_0^T\delta_s(u)d\Psi'_s(0)}
\end{equation}
\begin{equation*}
\sha_t(u)=-i\int_0^t e^{i\int_r^T\delta_s(u)d\Psi'_s(0)}V_r(u)\delta_r(u)d\Psi'_r(0)+i\int_0^t\xi_r(u)d\Psi'_r(0)\ .
\end{equation*}
We observe that $\sha_t(u)$ is predictable, $\sha_0(u)=0$, $L_0(u)=0$. By uniqueness of the
 decomposition of an ($\shf_t$)-special semimartingale, we obtain 
\begin{equation}\label{ENM12}
L(u)_t=\shm_t(u)=\int_0^tZ_s(u)d(-\Psi^{''}_s(0))
\end{equation}
Since $\langle O(u),M\rangle=0$ where $O(u)$ was defined in~\eqref{eq:O}, it follows
\begin{equation}\label{EMZ}
\left\langle V(u),M\right\rangle_t=\int_0^tZ_s(u)d\left\langle M\right\rangle_s\ ,
\quad\textrm{where}\quad Z_t(u)=\delta_t(u)V_t(u)\ .
\end{equation}
We need at this point to express the predictable covariation $\left\langle V(u),V(v)\right\rangle$, $\forall u,v \in \R$. For this we decompose the product $V(u)V(v)$ to obtain
\begin{equation}\label{VVR}
V_t(u)V_t(v)=V_t(u+v)R_t(u,v)\ ,
\end{equation}
where
$$
R_t(u,v)=\frac{\epsilon_{t,T}(u)\epsilon_{t,T}(v)}{\epsilon_{t,T}(u+v)}=\exp\{-(\nu_{T}(u,v)-\nu_{t}(u,v))\}\ .
$$
Since $(R_t(u,v))_{t\in[0,T]}$ is continuous, integrating by parts we obtain
$$
V_t(u)V_t(v)=\int_0^tR_s(u,v)dV_s(u+v)+\int_0^tV_s(u+v)R_s(u,v)d\nu_{s}(u,v)\ .
$$
Since $(\int_0^tR_s(u,v)dV_s(u+v))_t$ is an $(\shf_t)$-local martingale, it follows that
\begin{equation}\label{EVCroc}
\left\langle V(u),V(v)\right\rangle_t=\int_0^tV_s(u+v)R_s(u,v)(d\Psi_{s}(u+v)-d\Psi_{s}(u)-d\Psi_{s}(v))\ .
\end{equation}
We come back to the calculus of $\left\langle L(u),L(v)\right\rangle_t$; (\ref{EMM}) and (\ref{ENM12}) give
\begin{eqnarray*}
\left\langle L(u),L(v)\right\rangle_t
&=&\left\langle L(u),\int_0^.e^{i\int_r^T\delta_s(v)d\Psi'_s(0)}dV_r(v)\right\rangle_t\\
&=&\left\langle \int_0^.e^{i\int_r^T\delta_s(u)d\Psi'_s(0)}dV_r(u),\int_0^.e^{i\int_r^T\delta_s(v)d\Psi'_s(0)}dV_r(v)\right\rangle_t\\
&&-\left\langle \int_0^.\xi_r(u)dM_r,\int_0^.e^{i\int_r^T\delta_s(v)d\Psi'_s(0)}dV_r(v)\right\rangle_t\\
&=&\int_0^te^{i\int_r^T\big (\delta_s(u)+\delta_s(v)\big )d\Psi'_s(0)}d\left\langle V(u),V(v)\right\rangle_r-\int_0^t\xi_r(u)e^{i\int_r^T\delta_s(v)d\Psi'_s(0)}d\left\langle M,V(v)\right\rangle_r
\end{eqnarray*}
Using $(\ref{EMZ})$, $~(\ref{VVR})$ and $(\ref{EVCroc})$, we obtain 
\begin{eqnarray*}
\left\langle L(u),L(v)\right\rangle_t
%&=&\int_0^t e^{-i\int_r^T\big (\delta_s(u)+\delta_s(v)\big )d\Psi'_s(0)}V_r(u+v)R_r(u,v)d\nu_{r}(u,v)\\
%&&-\int_0^t V_r(u)e^{-i\int_r^T\delta_s(v)d\Psi'_s(0)}e^{-i\int_r^T\delta_s(u)d\Psi'_s(0)}Z_r(v)\delta_r(u)d\left\langle M\right\rangle_r\\
%&=&\int_0^t e^{-i\int_r^T\big (\delta_s(u)+\delta_s(v)\big )d\Psi'_s(0)}
%V_r(u+v)R_r(u,v)d\nu_{r}(u,v)\\
%&&-\int_0^t V_r(u)
%e^{-i\int_r^T\delta_s(v)d\Psi'_s(0)}e^{-i\int_r^T\delta_s(u)d\Psi'_s(0)}
%\frac{d(\Psi_r^{'}(u)-\Psi_r^{'}(0))}{d\Psi_r^{''}(0)}
%\delta_r(u)
%\left(iV_r(v)\frac{d(\Psi_r^{'}(v)-\Psi_r^{'}(0))}{d\Psi_r^{''}(0)}\right)d\left\langle M\right\rangle_r\\
%\delta_r(v)V_r(v)d\left\langle M\right\rangle_r\\
&=&\int_0^t e^{i\int_r^T\big (\delta_s(u)+\delta_s(v)\big )d\Psi'_s(0)}
%\exp\{\eta(u,T)-\eta(u,r)+\eta(v,T)-\eta(v,r)\}
V_r(u+v)R_r(u,v)d\nu_{r}(u,v)\\
&&-\int_0^t V_r(u)V_r(v)
%\exp\{\eta(v,T)-\eta(v,r)+\eta(u,T)-\eta(u,r)\}
e^{i\int_r^T\big (\delta_s(u)+\delta_s(v)\big )d\Psi'_s(0)}\delta_r(u)
%\frac{d(\Psi_r^{'}(u)-\Psi_r^{'}(0))}{d\Psi_r^{''}(0)}
%\left(\frac{d(\Psi_r^{'}(v)-\Psi_r^{'}(0))}{d\Psi_r^{''}(0)}\right)
\delta_r(v)d(-\Psi^{''}_r(0))\\
&=&\int_0^t e^{i\int_r^T\big (\delta_s(u)+\delta_s(v)\big )d\Psi'_s(0)}\epsilon_{r,T}(u)\epsilon_{r,T}(v) \left[d\nu_{r}(u,v)+\delta_r(v)\delta_r(u)d\Psi^{''}_r(0)\right]\ ,
\end{eqnarray*}
which concludes the proof.
\end{proof}
Now we can evaluate the error committed by the mean-variance hedging procedure described at Section~\ref{sec4}. 
\begin{thm}\label{TErrorGlobal}
Let $X=(X_t)_{t \in [0,T]}$ be a semimartingale with independent increments
with log-characteristic function $\Psi$.
%PEUT-ETRE ENLEVER
% and which satisfies
% Assumptions %\ref{HPAIND},
% \ref{HypoFSPII:1}, \ref{HypoFSPII:2} and
% \ref{ASupeta1},
%FIN ENLEVER
 Then the variance of the hedging error equals $J_0:=\int_{\R^2}d\mu(u)d\mu(v)J_0(u,v)$
where
$$
J_0(u,v)=\int_0^T\exp\left(\int_t^T \Big(\frac{d\Psi^{'}_s(0)}{d\Psi^{''}_s(0)}\Big)^2d\Psi^{''}_s(0)
%+\eta(u,T)-\eta(u,t)+\eta(v,T)-\eta(v,t)
+i\int_t^T \big (\delta_s(u)+\delta_s(v) \big )d\Psi_s'(0)
\right)
%&& \exp\left(\int_t^T -(\nu_{T}(u,v)-\nu_{s}(u,v))\right)\Gamma_{ds}(u,v)
\epsilon_{t,T}(u)\epsilon_{t,T}(v)d\Gamma_{t}(u,v)\ ,
$$
where $\epsilon_{t,T}$  is defined in~\eqref{eq:et}, $\delta_t$ is defined in~\eqref{eq:dt} and $\Gamma $ is defined 
 in~\eqref{eq:Gamma}.
\end{thm}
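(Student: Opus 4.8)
The plan is to start from the closed-form for the minimal quadratic error in Theorem~\ref{ThmSolutionPb1} and to substitute into it the predictable covariation of the orthogonal term $L=L^H$, computed component-by-component in Lemma~\ref{PCrochetuv}. Since $\shf_0$ is trivial and $\E[L_0^H]=0$, the $\shf_0$-measurable random variable $L_0^H$ is identically zero, so the term $\exp(-K_T)\E[(L_0^H)^2]$ drops out and the error reduces to
\begin{equation*}
J_0=\E\left[\int_0^T\exp\{-(K_T-K_t)\}\,d\langle L\rangle_t\right].
\end{equation*}
Because $X$ is a square integrable PII fulfilling (SC), the MVT process $K$ is deterministic, and from the proof of Proposition~\ref{prop:SCPII} one has $K_T-K_t=\int_t^T\big(\frac{d\Psi'_s(0)}{d\Psi''_s(0)}\big)^2 d(-\Psi''_s(0))$; hence $\exp\{-(K_T-K_t)\}=\exp\big(\int_t^T\big(\frac{d\Psi'_s(0)}{d\Psi''_s(0)}\big)^2 d\Psi''_s(0)\big)$, which is precisely the first factor appearing in the claimed $J_0(u,v)$.

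The central step is to express $\langle L\rangle$ through the elementary covariations $\langle L(u),L(v)\rangle$. Writing $L_t=\int_\R L_t(u)\,d\mu(u)$ as in Theorem~\ref{thmCO}, and observing that each $\langle L(u),L(v)\rangle$ is a \emph{deterministic} process (by Lemma~\ref{PCrochetuv}, where only the characteristics survive and $X$ itself disappears), I would prove the identity
\begin{equation*}
\langle L\rangle_t=\int_{\R^2}\langle L(u),L(v)\rangle_t\,d\mu(u)\,d\mu(v).
\end{equation*}
The clean non-circular route is to set $C_t:=\int_{\R^2}\langle L(u),L(v)\rangle_t\,d\mu(u)\,d\mu(v)$ and to verify that $L_t^2-C_t$ is an $(\shf_t)$-martingale: for $0\le s<t$ and bounded $\shf_s$-measurable $R_s$, one reduces $\E[(L_t^2-L_s^2)R_s]$ to $\E[(L_t-L_s)^2R_s]$ by the martingale property, expands $(L_t-L_s)^2$ as a double $\mu$-integral, and uses that $\E[(L_t(u)-L_s(u))(L_t(v)-L_s(v))\mid\shf_s]=\langle L(u),L(v)\rangle_t-\langle L(u),L(v)\rangle_s$ is deterministic. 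Since $C$ is deterministic, hence predictable and of bounded variation, uniqueness of the Doob-Meyer decomposition yields $C=\langle L\rangle$.

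Finally, substituting the explicit expression for $d\langle L(u),L(v)\rangle_t$ from Lemma~\ref{PCrochetuv} together with the deterministic factor $\exp\{-(K_T-K_t)\}$ into $J_0$ and interchanging the time integral with the $\mu$-double integral yields $J_0=\int_{\R^2}J_0(u,v)\,d\mu(u)\,d\mu(v)$ with $J_0(u,v)$ of the announced form. The main obstacle is the rigorous justification of the two Fubini exchanges, above all the passage from increment products to the brackets $\langle L(u),L(v)\rangle$ under $\int d\mu(u)\,d\mu(v)$; I expect to control this by combining the uniform bound $\E[|L_t(u)|^2]\le 6\exp(2K_T)+32(1+K_T)$ recorded in~\eqref{EstB} with the finiteness of $|\mu|(\R)$, exactly as in the estimates carried out in the proof of Theorem~\ref{thmCO}.
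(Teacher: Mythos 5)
Your skeleton coincides with the paper's: reduce to $\E\left[\int_0^T e^{-(K_T-K_t)}d\langle L\rangle_t\right]$ via Theorem~\ref{ThmSolutionPb1}, identify $\langle L\rangle$ with $\int_{\R^2}\langle L(u),L(v)\rangle\,d\mu(u)d\mu(v)$ through a martingale characterization, then substitute Lemma~\ref{PCrochetuv}. The genuine gap is your assertion that $\langle L(u),L(v)\rangle$ is deterministic, on which both of your key steps lean. It is not: the computation inside the proof of Lemma~\ref{PCrochetuv} gives $d\langle V(u),V(v)\rangle_s=V_s(u+v)R_s(u,v)\,d\nu_s(u,v)$ with $V_s(u+v)R_s(u,v)=V_s(u)V_s(v)=e^{i(u+v)X_s}\epsilon_{s,T}(u)\epsilon_{s,T}(v)$, so $d\langle L(u),L(v)\rangle_s$ carries the random factor $e^{i(u+v)X_s}$ (the printed display~\eqref{CrochetLuv} drops it, which may have misled you, but the paper's own decomposition $\langle L(u),L(v)\rangle=C^1+C^2$ in \eqref{EELC1}--\eqref{ELLC2} retains $e^{i(u+v)X_s}$ explicitly; a sanity check is that $\E[V_t(u)V_t(v)\mid\shf_s]-V_s(u)V_s(v)$ is proportional to $e^{i(u+v)X_s}$, so the conditional covariance of increments cannot be deterministic unless $v=-u$).

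Consequently two steps fail as written. First, $\E[(L_t(u)-L_s(u))(L_t(v)-L_s(v))\mid\shf_s]$ equals $\E[\langle L(u),L(v)\rangle_t-\langle L(u),L(v)\rangle_s\mid\shf_s]$, which is not deterministic, so you cannot pull it out and conclude directly; you must instead use that $L(u)L(v)-\langle L(u),L(v)\rangle$ is a martingale and justify interchanging $\E$, the time integral and $\int d\mu(u)d\mu(v)$. For that --- and already to know that your candidate $C_t$ is a well-defined predictable process of integrable variation so that Doob--Meyer uniqueness applies --- the bound on $\E\left(|L_t(u)|^2\right)$ from \eqref{EstB} is not sufficient: one needs the integrated total-variation estimate $\E\left(\int_{\R^2}d|\mu|(u)d|\mu|(v)\,\Vert\langle L(u),L(v)\rangle\Vert_{var}\right)<\infty$, which the paper derives separately (its \eqref{EMuLuVar}) from the pointwise bounds on $\gamma_s(u)$ and on $d\nu_s(u,v)$. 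Second, since the bracket is random, the final step is not a mere substitution: one must genuinely evaluate the expectation of $\int_0^T e^{-(K_T-K_t)}d\langle L\rangle_t$, i.e. integrate out the factor $e^{i(u+v)X_t}$, which is precisely the content of the closing sentence of the paper's proof and is absent from your argument.
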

\begin{proof}
Theorem \ref{ThmSolutionPb1} implies that the variance of the hedging error equals
\begin{equation}\label{E421}
\E \left(\int_0^T\exp\{-(K_T-K_s)\}d\left\langle L\right\rangle_s \right)\ ,
\end{equation}
where $K$ was defined in Definition $\ref{D29}$. By (\ref{AMPII}), it follows that
$
K_t=\int_0^t\left(\frac{d\Psi^{'}_s(0)}{d\Psi^{''}_s(0)}\right)^2d(-\Psi^{''}_s(0))
$.
We come back to  expression~(\ref{CrochetLuv}) given in Lemma~\ref{PCrochetuv}. It gives
\begin{equation}\label{ELLuv0}
\left\langle L(u),L(v)\right\rangle_t=C^1(u,v,t)+C^2(u,v,t)
\end{equation}
where
\begin{equation}\label{EELC1}
C^1(u,v,t):=
%\int_0^t\exp\{\eta(u,T)-\eta(u,s)+\eta(v,T)-\eta(v,s)\}exp\{i(u+v)X_s\} \\ 
\int_0^t e^{i\int_s^T \big (\delta_r(u)+\delta_r(v) \big )d\Psi_r'(0)}
%exp\{\Psi_T(u)-\Psi_s(u)+\Psi_T(v)-\Psi_s(v)\} 
\epsilon_{s,T}(u)\epsilon_{s,T}(v)
e^{i(u+v)X_s} 
d\nu_{s}(u,v)
\end{equation}
\begin{equation}\label{ELLC2}
%C^2(u,v,t)&=&\int_0^t\exp\{\eta(u,T)-\eta(u,s)+\eta(v,T)-\eta(v,s)\}exp\{i(u+v)X_s\} \nonumber\\ 
%& &exp\{\Psi_T(u)-\Psi_s(u)+\Psi_T(v)-\Psi_s(v)\}\frac{d(\Psi_s^{'}(v)-\Psi_s^{'}(0))}{d\Psi_s^{''}(0)}\\
%& &\frac{d(\Psi_s^{'}(u)-\Psi_s^{'}(0))}{d\Psi_s^{''}(0)}d(-\Psi_s^{''}(0))\nonumber
C^2(u,v,t):=
%\int_0^t\exp\{\eta(u,T)-\eta(u,s)+\eta(v,T)-\eta(v,s)\}exp\{i(u+v)X_s\} \\ 
\int_0^t e^{i\int_s^T \big (\delta_r(u)+\delta_r(v) \big )d\Psi_r'(0)}
%exp\{\Psi_T(u)-\Psi_s(u)+\Psi_T(v)-\Psi_s(v)\} 
\epsilon_{s,T}(u)\epsilon_{s,T}(v)\delta_s(u)\delta_s(v)
e^{i(u+v)X_s} 
d\Psi''_s(0)\ .
\end{equation}
We need to show that
\begin{equation}\label{EMuLuVar}
\E\left(\int_{\R^2}d|\mu|(u)d|\mu|(v)||\left\langle L(u),L(v)\right\rangle||_{var}\right)<\infty\ .
\end{equation}
%
%DEBUT: AJOUT 7/11. VERIFIER NADIA
%
Observe that 
$$
\Vert C^1(u,v,\cdot)\Vert_{var}=\int_0^T \vert H_s(u)H_s(v)\vert d\vert \nu_s\vert (u,v)\ .
$$
Let us consider first the term involving the measure $\nu$. Notice that 
$$
d\nu_s(u,v)=\left [-uvc_s+\int_{\R}(e^{iux}-1)(e^{ivx}-1)F_s(dx)\right ] d(-\Psi^{''}_s(0))\ ,
$$
Then, we obtain the following upper bound 
\begin{eqnarray*}
d\vert\nu_s\vert(u,v)&\leq &\left [\vert uv\vert c_s+2 \int_{\R}\vert \sin\frac{ux}{2}\vert\,\vert\sin\frac{vx}{2}\vert F_s(dx)\right ] d(-\Psi^{''}_s(0))\\
&\leq & \frac{1}{2}\left [(u^2+v^2)c_s+2\int_{\R} \Big [(\sin \frac{ux}{2})^2+(\sin \frac{vx}{2})^2 \Big ]F_s(dx)\right ] d(-\Psi^{''}_s(0))\\
&\leq &  \frac{1}{2} \big [\gamma_s(u)+\gamma_s(v)\big ]\ ,
\end{eqnarray*}
where $\gamma_s$ is defined in~(\ref{434gamma}). 
we obtain by setting $\gamma_s(u,v)=(u^2+v^2)\frac{c_s}{2}+\int_{\R} (\sin \frac{ux}{2})^2(\sin \frac{vx}{2})^2 F_s(dx)$
Now, using inequality~\eqref{eq:H:Major} yields 
\begin{eqnarray*}
\Vert C^1(u,v, \cdot)\Vert_{var}&=&\frac{1}{2}\exp(2K_T)\,\int_0^T \left [\gamma_s(u)+\gamma_s(v)\right ] \, 
\exp\left \{-\frac{1}{4} \int_s^T \Big [\gamma_r(u)+\gamma_r(v)\Big ] d(-\Psi_r''(0))\right \}\, d(-\Psi^{''}_s(0))\\
&=&2\exp(2K_T)\,\Big (1
%\exp\left\{-\frac{1}{4} \int_t^T \big [\gamma_r(u)+\gamma_r(v) \big ]d(-\Psi_r''(0))\right \}
-\exp\left\{-\frac{1}{4} \int_0^T \big [\gamma_r(u)+\gamma_r(v)\big ] d(-\Psi_r''(0))\right \}\Big )\\
&\leq& 2\exp(2K_T)\ ,
\end{eqnarray*}
which implies, by the fact that $\mu$ is finite 
\begin{equation}
\label{eq:C1}
\int_{\R^2} d|\mu|(u)d|\mu|(v)||C^1(u,v,.)||_{var}\leq 
2  \exp(2K_T) \vert \mu \vert(\R)^2.
\end{equation}
Now, using~\eqref{eq:xiMajor}, 
\begin{eqnarray}
\label{D11bis}
\Vert C^2(u,v,\cdot)\Vert_{var}
&\leq & 
\int_0^T   \vert \xi_s(u)\vert \,\vert  \xi_s(v) \vert d(-\Psi''_s(0)) \nonumber \\
&\leq &
2\exp(2K_T)\int_0^T \sqrt{\gamma_s(u)\gamma_s(v)}\exp\left \{-\frac{1}{4} \int_s^T \big (\gamma_r(u)+\gamma_r(v)\big )\,d(-\Psi''_r(0))\right \}  \,  d(-\Psi''_s(0))\nonumber \\
&\leq &
-4\exp(2K_T)\int_0^T \left (-\frac{1}{4}\right )\big (\gamma_s(u)+\gamma_s(v)\big )\exp\left \{-\frac{1}{4} \int_s^T \big (\gamma_r(u)+\gamma_r(v)\big )\,d(-\Psi''_r(0))\right \}  \,  d(-\Psi''_s(0)) \nonumber \\
&\leq&4\exp(2K_T)\ .
\end{eqnarray}
Finally \eqref{D11bis} implies
%In order to conclude to the validity of (\ref{EMuLuVar}) we need to show
\begin{equation}\label{EMuLuCVar}
\int_{\R^2}\left\|C^2(u,v,.)\right\|_{var}d|\mu|(u)d|\mu|(v) \le 4 \exp(2 K_T) \vert \mu \vert(\R)^2.
\end{equation}
Since $K_T$ is deterministic,
 \eqref{EMuLuCVar} and \eqref{eq:C1} imply \eqref{EMuLuVar}.
Previous considerations allow to prove that
\begin{equation}\label{EELLL}
\left\langle L \right\rangle_t=\int_{\R^2}d\mu(u)d\mu(v)
\left\langle L(u),L(v) \right\rangle_t.
\end{equation}
For this, it is enough to show that
\begin{equation}\label{EELLL1}
L_t^2-\int_{\R^2}d\mu(u)d\mu(v) \left\langle L(u),L(v) \right\rangle_t
\end{equation}
produces an ($\shf_t$)-martingale. First (\ref{EMuLuVar}) shows
 that the second term in (\ref{EELLL1}) is well-defined.
 By \eqref{EstB} and Fubini's,
%(\ref{PMu2}),
 (\ref{EELLL1}) gives
$
\int_{\R^2}d\mu(u)d\mu(v)(L_t(u)L_t(v)- \left\langle L(u),L(v) \right\rangle_t).
$
By similar arguments as point 4.(a) in the proof of Theorem \ref{thmCO}, using the fact that $(L_t(u))$ is a martingale and 
applying Fubini's, we are able to show that (\ref{EELLL1}) defines a martingale.
According to  (\ref{E421}), the last step of the proof consists in evaluating the expectation of
$
\int_0^Texp\{-(K_T-K_s)\}d\left\langle L\right\rangle_s
$
taking into account (\ref{EELLL}) and Lemma~\ref{PCrochetuv}.
\end{proof}

%PARTIE A MODIFIER JE MET LES REFERENCES AUX ANCIENNES HYPOTHESE EN COMMENTAIRE POUR LA COMPILATION\\
\section{Examples}

%SECTION NON VERIFIEE  FR - SG
\label{ESGM}
\subsection{The Gaussian examples}\label{SGM} 
We refer here to the toy model introduced at Section \ref{SS331}. We suppose that $X_t=\gamma(t)+W_{\psi(t)}$ with $\psi$ increasing,
 $d\gamma \ll d\psi$ and $\frac{d\gamma}{d\psi} \in \mathcal{L}^2(d\psi)$. This guarantees the (SC) property because of Proposition \ref{prop:SCPII} 2.
% It remains to verify Assumptions %~\ref{HypoFSPII:1},~\ref{HypoFSPII:2} and
% ~\ref{ASupeta}.
%\begin{enumerate}
%\item Since for $u \in \R$, $\Psi_t^{'}(u)=i\gamma(t)-u\psi(t)$, the quantity defined in REFERENCE QUI N'EXISTE PLUS %\eqref{KappaU2}
% yields
%$$
%\zeta(u)=\int_0^T \left|u-i\frac{d\gamma(t)}{d\psi(t)}\right|^2\exp[-u^2(\psi(T)-\psi(t))]d\psi(t)
%$$
%Hence $\zeta(u) \leq 2I_1(u)+2I_2(u)$ where 
%$$
%I_1(u)=u^2\int_0^T\exp[-u^2(\psi(T)-\psi(t))]d\psi(t) \quad \textrm{and} \quad
%I_2(u)=\int_0^T \exp[-u^2(\psi(T)-\psi(t))]\left(\frac{d\gamma(t)}{d\psi(t)}\right)^2d\psi(t)
%$$
%Since  $\frac{d\gamma}{d\psi} \in \mathcal{L}^2(d\psi)$, clearly we have 
% $\sup_{u \in \R}I_2(u)=\left|\left|\frac{d\gamma}{d\psi}\right|\right|^2_{L^2([0,T])}$. On the other hand 
%$I_1(u)=1-\exp[-u^2\psi(T)]\leq 1$. Finally $\sup_{u \in \R}\zeta(u)<\infty$. We conclude that Assumption %~\ref{HypoFSPII:1} and~\ref{HypoFSPII:2} 
%hold for every finite signed measure $\mu$.
%\item For $u \in \R$, $t\in [0,T]$, we express $\delta_t(u)=iu$ and $\int_0^t\delta_s(u)d\Psi'_s(0)=iu\gamma(t)$. So item 1 of Assumption \ref{ASupeta} is verified. JE NE COMPRENDS PAS LA PHRASE SUIVANTE
%\item Since $||\Psi_{.}^{''}(u)||_{var}=\psi(T)$, item 2. is verified by Proposition \ref{RASupeta}.
%\end{enumerate}
Given $f$ and $\mu$ expressed via (\ref{ES1}), the FS-decomposition of $H=f(X_T)$ is provided by Theorem \ref{thmCO} with
$$
\Psi_t^{'}(0)=i\gamma(t)\ , \quad \delta_t(u)=iu\ ,\quad \textrm{and}\quad \epsilon_{t,T}(u)= \exp[iu(\gamma(T)-\gamma(t))-\frac{u^2}{2}(\psi(T)-\psi(t))]\ ,
$$
which yields 
$$
  H_t(u)=\exp[2iu(\gamma(T)-\gamma(t))-\frac{u^2}{2}(\psi(T)-\psi(t))]e^{iuX_t} \quad \textrm{and} \quad
\xi_t(u)=iuH_t(u)
$$
%\begin{eqnarray*}
%\eta(u,t)=iu\gamma(t),\quad H_t(u)=\exp[2iu(\gamma(T)-\gamma(t))-\frac{u^2}{2}(\psi(T)-\psi(t))]\exp[iuX_t] \quad \textrm{and} \quad
%\xi_t(u)=iuH_t(u)
%\end{eqnarray*}
According to Lemma~\ref{PCrochetuv}, we can easily show that $\Gamma_t(u,v)\equiv 0$, for all $t \in [0,T], u,v \in \R$. Consequently, the variance of the hedging error is zero.

\subsection{The L\'evy case}\label{S5.2}
Let $X$ be a square integrable L\'evy process, with characteristic function
$\Psi_t(u)$ where $\Psi_t(u)=t\Psi(u)$ .
It is always a semimartingale since $\Psi\rightarrow e^{i\Psi_t(u)}$ has bounded variation, see Theorem 4.14 of \cite{JS03}. By Remark \ref{rem:moment:b}, $\Psi$ is of class $C^2(\mathbb{R})$.
% Then, Assumption 1. and 2. of Section~\ref{s61} are obviously
% fulfilled.
We suppose that $\Psi^{''}(0)\neq 0$.
 We have
$$
\frac{d\Psi^{'}_t(0)}{d\Psi^{''}_t(0)}=
\frac{\Psi^{'}(0)}{\Psi^{''}(0)}.
%\quad \textrm{and}\quad \delta_t(u)=i\frac{\Psi^{'}(u )-
%\Psi^{'}(0)}{\Psi^{''}(0)}\ .
$$
Condition (SC) is verified taking into account Proposition \ref{prop:SCPII}.
In conclusion, we can apply Theorem \ref{thmCO} taking into account 
\eqref{eq:et} and \eqref{eq:dt}, we obtain $V_t(u )= \exp((T-t)\Psi(u ))e^{iuX_t},$
$$
 H_t(u )=\exp \left((T-t)\big (\Psi(u)+\frac{\Psi'(u )-\Psi'(0)}{\Psi''(0)}\Psi'(0)\big )\right)e^{iuX_t}\quad \textrm{and} \quad \xi_t(u )=H_t(u )i\frac{\Psi^{'}(u )-\Psi^{'}(0)}{\Psi^{''}(0)}\ .
$$
%\begin{eqnarray*}\eta(u ,t)=t\frac{\Psi^{'}(u )-\Psi^{'}(0)}{\Psi^{''}(0)}\Psi^{'}(0)\ .\end{eqnarray*}
%
%\end{example}
The factor $\Gamma_t(u,v)$ appearing in Lemma~\ref{PCrochetuv} gives $\Gamma_t(u,v)=t\Gamma(u,v)$ and
$$
\Gamma(u,v)=\left(\Psi(u+v)-\Psi(u)-\Psi(v)\right)+\frac{(\Psi^{'}(v)-\Psi^{'}(0))(\Psi^{'}(u)-\Psi^{'}(0))}{(-\Psi^{''}(0))}\ .
$$
In particular, when $X$ is a Poisson process we have $\Gamma(u,v) \equiv 0$.
%$$
%\Gamma_{dr}(u,v)=\left[\left(\Psi_{dr}(u+v)-\Psi_{dr}(u)-\Psi_{dr}(v)\right)+\left(\frac{d(\Psi_r^{'}(u)-\Psi_r^{'}(0))}{d\Psi_r^{''}(0)}\frac{d(\Psi_r^{'}(v)-\Psi_r^{'}(0))}{d\Psi_r^{''}(0)}\right)d(-\Psi^{''}_r(0))\right]=0
%$$
This shows, as expected, that $\left\langle L(u),L(v)\right\rangle=0$, $\forall u,v\in \R$.
%%%%%%%%%%%%%%%%%%%%%%%%%%%%%
\subsection{Wiener integral of L\'evy processes}\label{SILC}
With the same notations as in subsection \ref{SWIL}, we consider a square integrable L\'evy process $\Lambda=(\Lambda_t)_{t \in [0,T]}$ such that $\Lambda_0=0$ 
and  $Var(\Lambda_1) \neq 0$.
Let $\gamma:[0,T]\rightarrow \R$ be a bounded Borel function. We set $X_t=\int_0^t \gamma_sd\Lambda_s$, $t \in [0,T]$. 
 For $u \in \R$, $t \in [0,T]$, we have the following quantities.
%  whose expressions imply in particular Assumption%~\ref{HypoFSPII:1}.
%\begin{enumerate}
%\item 
According to the observations below Remark \ref{remarque27}
%$$
%\frac{d\Psi^{'}_t(u)}{d(-\Psi^{''}_t(0))}=\frac{\Psi^{'}_{\Lambda}(u\gamma_t)}{-\Psi^{''}_{\Lambda}(0)}\frac{1}{\gamma_t}1_{\{ \gamma_t\neq 0 \}} 
%$$
%\item 
\begin{equation}\label{EZETAW}
\epsilon_{t,T}(u)=\exp\left(\int_t^T\Psi_\Lambda(u\gamma_s)ds\right)
%\zeta(u)=\frac{1}{-\Psi^{''}_{\Lambda}(0)}\int_0^T|\Psi^{'}_{\Lambda}(u\gamma_t)|^2\exp\left(\int_t^T2Re(\Psi_{\Lambda}(u\gamma_r))dr\right)dt
\end{equation}
%\item 
\begin{equation}\label{EEtaW}
\delta_{t,T}(u)=-i  \frac{\Psi^{'}_{\Lambda}(u\gamma_t)-\Psi^{'}_{\Lambda}(0)}{\Psi^{''}_{\Lambda}(0)}.
\end{equation}
%\end{enumerate}
\begin{remarque}\label{RS12C}
%Similarly as for Section \ref{S5.2}, taking into account (\ref{EEtaW}) item 1. of Assumption \ref{ASupeta} is verified if
%\begin{equation}\label{ARePsiW}
%\sup_{u\in \R}Re\left(\frac{\Psi^{'}_{\Lambda}(u)\Psi^{'}_{\Lambda}(0)}{\Psi^{''}_{\Lambda}(0)}\right)<\infty
%\end{equation}
%This is fulfilled if
%\begin{equation}\label{ARePsiW1}
%\inf_{u\in \R}\E[\Lambda_1]Im(\Psi^{'}_{\Lambda}(u))>-\infty
%\end{equation}
%\item  (\ref{ARePsiW1}) is fulfilled for instance if $\Psi^{'}_{\Lambda}$ is bounded.% In this case also Assumption \ref{HypoFSPII:2} is verified. This holds for instance when $\Lambda$ is one of the examples in Remark \ref{FSLevy10}.
%\item Taking into account Proposition \ref{RASupeta}, 
%item 2. of Assumption \ref{ASupeta} is verified if $u \mapsto ||\Psi_{.}^{'}(u)||_{var}=\int_0^T|\Psi^{'}_{\Lambda}(u\gamma_s)\gamma_s|ds$ is bounded. Again, this is the case if $\Psi^{'}_{\Lambda}$ is bounded
If $\Lambda=W$ then there is a Brownian motion $\tilde{W}$ such that $X_t=\tilde{W}_{\Psi(t)} $ with $\Psi(t)=\int_0^t\gamma_s^2ds$. This was the object of Section \ref{SGM}.
\end{remarque}
\subsection{Representation of some contingent claims by Fourier transforms}\label{RSCFT}
%%%%%%%%%%%%%%%%%%%%%%%%%%%%%%%%%%%%%%%%%%
In general, it is not possible to find a Fourier representation, of 
the form~(\ref{ES1}), for a given payoff function which is not 
necessarily bounded or integrable. Hence, it can be more convenient 
to use the bilateral Laplace transform that allows an extended domain
 of definition including non integrable functions.
 We refer to~\cite{Cramer},~\cite{Raible98} and more recently~\cite{Eberlein} for such characterizations of payoff functions.
It should be certainly possible to extend our approach
replacing the Fourier transform with the bilateral Laplace transform.
%In~\cite{Eberlein}, are listed some examples of payoff functions used in finance with their representations as Fourier Transforms or bilateral. 
%
However, to illustrate the present approach 
 restricted to payoff functions represented as classical Fourier transforms, we give here one simple example of such representation extracted from~\cite{Eberlein}.  
 The payoff of a {\it self quanto put option} with strike $K$ is 
$$
f(x)=e^x(K-e^x)_+\quad \textrm{and}\quad \hat{f}(u)=\int_{\mathbb{R}} e^{iux} f(x)\,dx=\frac{K^{2+iu}}{(1+iu)(2+iu)}\ .
$$
In this case $\mu$ admits a density which is proportional to $\hat f$
which is integrable.
%\begin{enumerate}
%	\item A variant of the digital option is the so-called \textit{asset-or-nothing digital}, where the option holder receives one unit of the \textit{asset}, instead of \textit{currency}, depending on wether the underlying reaches some barrier or not. Hence, the payoff of the asset-or-nothing digital put with barrier is 
%$$
%f(x)=e^x\mathbf{1}_{e^x<B}  \quad \textrm{and}\quad \hat{f}(u)=\int_{\mathbb{R}} e^{iux} f(x)\,dx=\frac{B^{1+iu}}{1+iu}\ .
%$$
%	\item The payoff of a {\it self quanto put option} with strike $K$ is 
%$$
%f(x)=e^x(K-e^x)_+\quad \textrm{and}\quad \hat{f}(u)=\int_{\mathbb{R}} e^{iux} f(x)\,dx=\frac{K^{2+iu}}{(1+iu)(2+iu)}\ .
%$$
%\end{enumerate}
%In both cases the measure $\mu$ is finite.
%A REVOIR NADIA

\section*{Appendix}

\begin{proof}[{\bf Proof of Proposition \ref{LSCP1}}]

%By item 2. of Remark \ref{RemarkRCher}, for any $u \in \R$, $t\mapsto \Psi_t(u)$ is absolutely continuous with respect to an increasing function $a$.

In the sequel, we will make use of Lemma 3.12 of \cite{GOR} in 
a fairly  extended generality.

\begin{lemme}\label{GOR}
Let ${\cal N}$ be a complete metric 
% normal metric locally connected
 space and $\mu$ and $\nu$
are two non-negative Radon   non-atomic measures.
%We consider two positive finite non-atomic Borel measures on $E \subset \mathbb{R}^n$, $\mu$ and $\nu$. 
We suppose the following:
\begin{enumerate}
\item $\mu \ll  \nu\ ;$
\item $\mu(I)\neq 0$ for every open ball $I$ of ${\cal N}$.
\end{enumerate} 
Then $h := {\displaystyle \frac{d\mu}{d\nu} \neq 0}$ $\nu$ a.e. In particular
 $\mu$ and $\nu$ are equivalent.
 \end{lemme}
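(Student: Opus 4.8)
The plan is to work directly with the Radon--Nikodym density. Since $\mu \ll \nu$, there is a nonnegative $\nu$-measurable function $h = \tfrac{d\mu}{d\nu}$, defined $\nu$-a.e., with $\mu(B) = \int_B h\, d\nu$ for every Borel set $B$. The assertion $h \neq 0$ $\nu$-a.e.\ amounts to $\nu(Z) = 0$ where $Z := \{x : h(x) = 0\}$, and once this is established the equivalence $\mu \sim \nu$ is immediate: if $\mu(B) = 0$ then $h = 0$ $\nu$-a.e.\ on $B$, so $\nu(B \cap \{h > 0\}) = 0$, and combined with $\nu(Z) = 0$ this gives $\nu(B) = 0$. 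Observe also that $\mu(Z) = \int_Z h \, d\nu = 0$ holds automatically, so the entire difficulty is to rule out that the $\mu$-negligible set $Z$ carries positive $\nu$-mass.

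First I would restate hypothesis 2: the condition $\mu(I) \neq 0$ for every open ball $I$ says exactly that $\operatorname{supp}\mu = \mathcal N$, i.e.\ the only open $\mu$-null set is empty. I would then argue by contradiction, assuming $\nu(Z) > 0$. Since $\nu$ is a non-atomic Radon measure it is inner regular, so I can extract a compact $K \subseteq Z$ with $\nu(K) > 0$, while still $\mu(K) = 0$. The aim is to manufacture from $K$ an open ball of $\mu$-measure zero, directly contradicting hypothesis 2.

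The hard part is precisely this last conversion, and it is where the metric structure of $\mathcal N$ must be used. The obstruction is that a $\mu$-null, $\nu$-positive set need not contain --- nor sit, modulo $\mu$-null pieces, inside --- any open ball: its interior may be empty although its $\nu$-measure is positive. To handle this I would invoke a differentiation theorem valid on $\mathcal N$; on $\mathbb R$, and hence on the interval $[0,T]$ relevant to Proposition~\ref{LSCP1}, the Besicovitch--Lebesgue differentiation theorem yields $h(x) = \lim_{r \downarrow 0} \mu(B(x,r))/\nu(B(x,r))$ for $\nu$-a.e.\ $x$, so that at $\nu$-a.e.\ point of $Z$ the $\mu$-mass of small balls is negligible relative to their $\nu$-mass. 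The remaining, and most delicate, step is to turn this collapse of the density into a contradiction with the full-support hypothesis 2; I expect this to force an appeal to extra structure of the specific measures involved (both arising from increasing functions on $[0,T]$), since the bare positivity $\mu(I) > 0$ is not quantitative enough by itself to prevent the ratio $\mu(B(x,r))/\nu(B(x,r))$ from tending to $0$ on a set of positive $\nu$-measure.
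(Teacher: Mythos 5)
Your setup is sound as far as it goes: reducing the claim to $\nu(Z)=0$ for $Z=\{h=0\}$, noting that $\mu(Z)=0$ holds automatically, and reading hypothesis~2 as $\operatorname{supp}\mu=\mathcal N$ are all correct, as is the extraction of a compact $K\subseteq Z$ with $\nu(K)>0$ and $\mu(K)=0$ by inner regularity. But the proof never closes: the step that would convert this $K$ into a contradiction with the full-support hypothesis is exactly the content of the lemma, and you explicitly leave it open. What you have written is therefore a reduction, not a proof. For the comparison you were asked about: the paper supplies no internal argument either --- Lemma~\ref{GOR} is stated and the reader is sent to Lemma~3.12 of \cite{GOR} --- so there is no in-paper proof against which your attempt can be matched; the burden of the missing step is entirely on the cited reference.

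More importantly, your closing suspicion that ``the bare positivity $\mu(I)>0$ is not quantitative enough'' is correct, and the gap cannot be closed from hypotheses 1.--2.\ as printed. Take $\mathcal N=[0,1]$, $\mu=\lambda$ the Lebesgue measure, and $\nu=\lambda+\kappa$ with $\kappa$ the (finite, Radon, non-atomic) Cantor measure carried by the Cantor set $C$. Then $\mu\ll\nu$, every open ball has positive $\mu$-measure, yet $h=\frac{d\mu}{d\nu}=\mathbf 1_{C^c}$ vanishes on $C$ with $\nu(C)=1$, and $\mu$ and $\nu$ are not equivalent. So the statement needs additional structure beyond what is listed --- structure that is present in the application inside Proposition~\ref{LSCP1}, where $\mu$ and $\nu$ are the specific measures $-d\Psi''_t(0)$ and $da_t$ already linked by the density $\xi_\cdot(0)$, or that must come from the precise hypotheses of Lemma~3.12 in \cite{GOR}. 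Your differentiation-theorem strategy also quietly specializes to $\R$, whereas the lemma is stated for an arbitrary complete metric space, where a Besicovitch-type covering property is not available. In short, the missing step is not a technicality you overlooked: as posed, it is not provable, and any correct argument must either strengthen the hypotheses or exploit the particular form of the measures to which the lemma is applied.
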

%We continue the proof of Proposition \ref{LSCP1}. 

\begin{enumerate}
\item If there are no deterministic increments then setting $d\mu(t)=-d\Psi_t^{''}(0)$ and $d\nu(t)=da_t$, it follows that $da_t$ is equivalent to $-d\Psi_t^{''}(0)$, because of Lemma \ref{GOR}. Consequently the result is established.
\item Suppose that not all the increments are non-deterministic.
We decompose $E:= [0,T] = E_R \cup E_R^C$ where
$$
E_R=\left\{t\in [0,T] \vert Var(X_{(t+\varepsilon) \wedge T} - X_{t}) > 0,
 \forall
  \epsilon  > 0 \ {\rm or} \
 Var(X_t  - X_{(t-\varepsilon)_+}) \neq 0 \ 
 \forall   \epsilon  > 0
 \right \},
$$
and its complementary
$$
E_R^C=\left\{t\in [0,T]\vert \exists \varepsilon > 0, 
 Var(X_{(t+\varepsilon) \wedge T} - X_{(t-\varepsilon)_+} = 0\right\}.
$$
Without restriction to generality we can suppose that $T \in E_R$.
Since $E_R^C$ is an open subset of $[0,T]$, it can be decomposed
into a union $\bigcup_{n \in \N} I_n$ of open (disjoint) intervals of $[0,T]$.
We denote $a_n = \inf I_n, b_n = \sup_n I_n$.
Clearly $a_n$ and $b_n$ belong to $ E_R$ and to its boundary,
since $E_R$ is closed.
We define on $E$ a semidistance $d$ such that $d(u,v)=Var(X_v-X_u)$.
The equivalence relation $\shr$ on $E$ defined setting
$x \shr y$ if and only if $d(x,y) = 0$, produces the 
following equivalence classes:
$$ \{t\}, t \in {\rm int} E_R, \quad I_n, n \in \N.$$
The quotient $E/d$ can be identified with family of {\it typical
 representatives} $E_d = {\rm int} E_R  \bigcup \{ b_n, \ n \in \N \}.$ \\
We denote by $\tilde{a}_t=\int_{[0,t]\cap E_R}da_s$.
The proof of Proposition \ref{LSCP1} will be concluded if
the two lemmas below hold.
\begin{lemme}\label{Lequiv}
\begin{enumerate}
\item $\int_{E_R^C}d\left(-\Psi_t^{''}(0)\right)=0$.
\item   $  \Psi_t(u)$  (resp. $  \Psi_t^{''}(u)$)
 is absolutely continuous with respect to $d\tilde{a}$, for every $u \in \R$.
\item $\int_{E_R^C}d\tilde a_t=0$.
\end{enumerate}
\end{lemme}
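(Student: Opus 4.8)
The plan is to prove the three items in the order (1), (2), (3), extracting along the way the structural consequences of (SC) on the degenerate set $E_R^C$. Throughout I would set $G_t:=-\Psi_t''(0)=\langle M\rangle_t=Var(X_t)$, which is nondecreasing and continuous in $t$ by Proposition~\ref{prop:SCPII} and Remark~\ref{RQEFS}. For item (1) I would first observe that for each $t\in E_R^C$ the defining property supplies $\varepsilon>0$ with $G((t+\varepsilon)\wedge T)-G((t-\varepsilon)_+)=Var(X_{(t+\varepsilon)\wedge T}-X_{(t-\varepsilon)_+})=0$, so that, $G$ being nondecreasing, $G$ is constant on a neighbourhood of $t$. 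Hence $G$ is locally constant on the open set $E_R^C$, therefore constant on each connected component $I_n=(a_n,b_n)$, and continuity gives $\int_{I_n}dG=G(b_n)-G(a_n)=0$. Summing over $n$ then yields $\int_{E_R^C}d(-\Psi_t''(0))=0$.

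Next I would read off the characteristics on $E_R^C$. Writing $G_t=\int_0^t\xi_s(0)\,da_s$ with $\xi_s(0)=c_s+\int_{\R}x^2F_s(dx)\ge 0$ via \eqref{EXi}, item (1) forces $c_s=0$ and $\int_{\R}x^2F_s(dx)=0$, hence $F_s=0$, for $da$-a.e.\ $s\in E_R^C$. For the drift I would invoke (SC): by Proposition~\ref{prop:SCPII} and \eqref{AMPII}, $dA_s=\alpha_s\,dG_s$, and since the nonnegative measure $dG$ charges $E_R^C$ with zero mass, the measure $dA$ restricted to $E_R^C$ vanishes; as $F_s=0$ on $E_R^C$ identifies $b_s\,da_s$ with $dA_s$ there, this gives $\int_{[0,t]\cap E_R^C}b_s\,da_s=0$ for every $t$.

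Item (2) would then follow quickly: by \eqref{eq:LevyKhinAbs} and \eqref{EXi}, $\Psi_t(u)=\int_0^t\eta_s(u)\,da_s$ and $\Psi_t''(u)=-\int_0^t\xi_s(u)\,da_s$, while on $E_R^C$ the previous step gives, $da$-a.e., $\xi_s(u)=0$ and $\eta_s(u)=iub_s$ (the $c_s$- and $F_s$-terms dropping out). Thus $\int_{[0,t]\cap E_R^C}\xi_s(u)\,da_s=0$ and $\int_{[0,t]\cap E_R^C}\eta_s(u)\,da_s=iu\int_{[0,t]\cap E_R^C}b_s\,da_s=0$ for all $t$, so splitting the integrals over $E_R$ and $E_R^C$ and recalling $d\tilde a_s=\mathbf{1}_{E_R}(s)\,da_s$ yields $\Psi_t(u)=\int_0^t\eta_s(u)\,d\tilde a_s$ and $\Psi_t''(u)=-\int_0^t\xi_s(u)\,d\tilde a_s$, i.e.\ absolute continuity with respect to $d\tilde a$. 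Item (3) is then immediate, since $\int_{E_R^C}d\tilde a=\int\mathbf{1}_{E_R^C}\mathbf{1}_{E_R}\,da=0$.

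The main obstacle I anticipate is the drift estimate in the second step: vanishing variance on $E_R^C$ does not by itself control the bounded-variation part $A$, and it is exactly (SC)---equivalently Proposition~\ref{PSCPIII}, which forces deterministic increments to be null increments---that makes $dA$ vanish wherever $\langle M\rangle$ is flat. Without it, the density $\eta_s(u)$ would persist on $E_R^C$, and both the absolute continuity in item (2) and the equivalence $d\tilde a\sim d(-\Psi_t''(0))$ targeted by Proposition~\ref{LSCP1} would break down. The covering argument behind item (1) is routine once phrased as ``locally constant on an open connected set implies constant''.
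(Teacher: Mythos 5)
Your proof is correct, and for items 1 and 3 it coincides with the paper's argument (each point of $E_R^C$ has a neighbourhood on which $-\Psi''_\cdot(0)=\langle M\rangle$ has zero increment, so the measure $d(-\Psi''_t(0))$ vanishes on each component $I_n$; item 3 is immediate from the definition of $\tilde a$). For item 2 you take a genuinely different, characteristics-level route: from item 1 and the nonnegativity of $\xi_s(0)=c_s+\int_\R x^2F_s(dx)$ you deduce $c_s=0$ and $F_s=0$ for $da$-a.e.\ $s\in E_R^C$, and you kill the remaining drift density $b_s$ by noting that (SC) gives $dA\ll d\langle M\rangle$ and $d\langle M\rangle$ puts no mass on $E_R^C$. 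The paper instead invokes Proposition~\ref{PSCPIII} (under (SC) a deterministic increment is a null increment), concludes that $t\mapsto \Psi_t(u)$ is constant on each $I_n$, and hence that the measure $\eta_s(u)\,da_s$ vanishes there. Both arguments rest on the same mechanism --- (SC) forcing the drift to be dominated by the variance, exactly the obstacle you single out --- but yours makes explicit which characteristics vanish on $E_R^C$ (giving $b=c=0$, $F=0$ $da$-a.e.\ there as a byproduct), while the paper's version is shorter because it recycles the already-proved Proposition~\ref{PSCPIII}. Either route yields $\Psi_t(u)=\int_0^t\eta_s(u)\,d\tilde a_s$ and $\Psi''_t(u)=-\int_0^t\xi_s(u)\,d\tilde a_s$, as required.
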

\begin{lemme}\label{Lequiv1} $d\tilde{a}$ is equivalent
to $d\left(-\Psi_t^{''}(0)\right)$.
\end{lemme}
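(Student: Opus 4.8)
The plan is to deduce Lemma~\ref{Lequiv1} from the abstract equivalence criterion of Lemma~\ref{GOR}, applied on the quotient metric space built above, taking for $\mu$ the variance measure $d(-\Psi''_t(0))$ and for $\nu$ the measure $d\tilde a_t$. Recall that the semidistance $d(u,v)=\mathrm{Var}(X_v-X_u)=-(\Psi''_v(0)-\Psi''_u(0))$ is continuous on the compact interval $[0,T]$, so its Kolmogorov quotient $E/d$ (identified with the family of typical representatives $\mathrm{int}\,E_R\cup\{b_n\}$) is a compact, hence complete, metric space. Since $X$ is continuous in probability, $t\mapsto\Psi''_t(0)$ is continuous by Remark~\ref{RQEFS}, so $\mu$ is non-atomic, and the cumulative function $a$ in~\eqref{eq:LevyKhinAbs} may be taken continuous (Proposition~II.2.9 of~\cite{JS03}), so $\nu$ is non-atomic as well. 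Both push forward to finite, non-atomic Radon measures on $E/d$: by Lemma~\ref{Lequiv}(1) and (3) neither charges $E_R^C$, and the collapsed intervals $I_n$ carry no variance, so the quotient map is $\mu$- and $\nu$-negligible on the identified sets.

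First I would verify hypothesis~1 of Lemma~\ref{GOR}, namely $\mu\ll\nu$. By~\eqref{EXi} one has $d(-\Psi''_t(0))=\xi_t(0)\,da_t$ with $\xi_t(0)=c_t+\int_{\R}x^2F_t(dx)\ge 0$, which is finite $da$-a.e. by square integrability. Since $\mu$ charges no subset of $E_R^C$ (Lemma~\ref{Lequiv}(1)), it is carried by $E_R$, whence
\[
d(-\Psi''_t(0))=\xi_t(0)\,\mathbf 1_{E_R}\,da_t=\xi_t(0)\,d\tilde a_t,
\]
and this exhibits $\mu$ as absolutely continuous with respect to $\nu=d\tilde a_t$.

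The crux is hypothesis~2: every open ball of $(E/d,d)$ must satisfy $\mu(I)\neq 0$. An open ball centred at a representative $t$ of radius $r$ is $\{s:\mathrm{Var}(X_s-X_t)<r\}$; by continuity of the variance it contains a genuine interval neighbourhood $J=[\alpha,\beta]$ of $t$ lying in $E_R$. Because $t\in E_R$, the increment of $X$ over every one-sided neighbourhood of $t$ has strictly positive variance; using the independence of increments, $\mu(J)=-(\Psi''_\beta(0)-\Psi''_\alpha(0))=\mathrm{Var}(X_\beta-X_\alpha)>0$. Hence $\mu(I)>0$ for every open ball $I$.

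With both hypotheses verified, Lemma~\ref{GOR} gives $h:=d\mu/d\nu\neq 0$ $\nu$-a.e., so $\mu$ and $\nu$ are equivalent; that is, $d\tilde a_t$ is equivalent to $d(-\Psi''_t(0))$, which is the assertion of Lemma~\ref{Lequiv1}. I expect the main obstacle to be precisely the ball condition: one must ensure that, after the quotient collapses each deterministic interval $I_n$ to its endpoint $b_n$, the open balls of the variance metric correspond to honest interval neighbourhoods in $E_R$ on which the increments of $X$ genuinely fluctuate, so that the variance measure degenerates on none of them; the absolute-continuity step and the topological preliminaries are comparatively routine once the structure of $E_R$ from Lemma~\ref{Lequiv} is in hand.
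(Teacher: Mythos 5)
Your proof is correct and follows essentially the same route as the paper: apply Lemma~\ref{GOR} on the quotient space $E_d$ with $\mu=d(-\Psi''_t(0))$ and $\nu=d\tilde a_t$, obtaining absolute continuity from the density $\xi_t(0)\mathbf 1_{E_R}$ (the content of Lemma~\ref{Lequiv}(2)) and the ball condition from the fact that a ball around $t_0\in E_R$ contains an interval neighbourhood of positive variance. One small imprecision: by the definition of $E_R$ only \emph{one} of the two one-sided increments at $t$ is guaranteed to have positive variance for all $\varepsilon$, not both, but your conclusion $\mathrm{Var}(X_\beta-X_\alpha)>0$ for $\alpha<t<\beta$ survives since the two-sided variance is the sum of the one-sided ones.
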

\begin{proof}[{\bf Proof of Lemma \ref{Lequiv}}]
\begin{enumerate}
\item Since each $I_n$ is precompact, it can be recovered by a
countable sequence of subintervals of the type $]t_n-\varepsilon_n, 
t_n+\varepsilon_n[  $. So, by definition of $E_R^C$,
 we have $\int_{I_n} d(-\Psi_t'')(0) = 0$.
The item  follows then because  $E_R^C$ is the union of countable intervals.
\item It is enough to show that for every $B$ Borel subset of $E_R^C$ we have
 $$\int_B \xi_s(u)da_s  = \int_B \eta_s(u)da_s=0,$$
where $\eta_s(u)$ (resp. $\xi_s(u)$) was introduced in \eqref{eq:LevyKhinAbs} (resp. \eqref{EXi}). 
We only treat the $\Psi_t(u)$ case, the other 
one being similar.
By Proposition
 \ref{PSCPIII}, if $X_b-X_a$ is deterministic then $X$
and in particular $t \mapsto \Psi_t(u)$
 is constant on $[a,b]$. 
% Since $E_R^C=\bigcup_{t\in E_R^{C,0}}I_t$ and $E_R^{C,0}$ is countable we have
Consequently
\begin{equation*}
\left\vert \int_{E^R_C} \eta_s(u)  da_s\right\vert \leq \sum_{n \in \N}
\left\vert \int_{I_n}   \eta_s(u)   da_s\right \vert=\sum_{n \in I_n}
\left\vert  \int_{I_n}
d\Psi_t(0) \right \vert = 0.  \end{equation*}
%\item Since $t \rightarrow \Psi_t(0)$ is constant on $I_n$,
  \item It is a consequence of the definition of $\tilde a$.
\end{enumerate}
\end{proof}
%\begin{proof}[{\bf Proof of Proposition \ref{LSCP1}}]
\begin{proof}[{\bf Proof of Lemma \ref{Lequiv1}}]
%In particular, we can choose $\tilde{a}$ instead of $a$. At this point, 
${\cal N} := E_d$ is a complete  metric space equipped with the 
distance, inherited from $E$, still denoted by $d$.
We define $d\mu$ (resp. $d\nu$)  the measure on 
the Borel $\sigma$-algebra of ${\cal N}$
%E_R \cup E_R^{C,0} %
obtained by 
 restriction  from $-d\Psi_t^{''}(0)$  (resp. $d\tilde{a}_t$).
This is possible since 
by items (a) and (c) of Lemma \ref{Lequiv}
 $$\int_{E_R^C} d(-\Psi_t^{''}(0))= 
\int_{E_R^C} d\tilde a_s = 0.$$
Condition 1 
of  Lemma  \ref{GOR} is  verified by item (b).
Concerning Condition 2. of the same lemma,  let  $ t_0 \in E_d 
\subset E_R$ 
and $B(t_0)$ an open ball centered at $t_0$.
Obviously 
$\mu(B(t_0)) > 0$.
%  this holds also  if $t_0 = b_n (= \sup b_n)$.
%Since $b_n$ cannot be $T$, and since $I_n$
%is a connected component of $E^C_R$,
% there is a sequence of $t_\ell > t_0$ in $E_R$.
%Since $E_R$ is closed $b_n \in E_R$ 
%converging decreasingly to $t_0$ such that $\Psi_{t_\ell}''(0) \downarrow 0$.
By  Lemma \ref{GOR}, $\nu  \sim \mu$ on the 
Borel $\sigma$-algebra of $ E_d$ and
the result  follows.
% Finally $(d\tilde{a}_t)$ is also continuous with respect to $-d\Psi_t^{''}(0)$ because of Lemma \ref{Lequiv} item 3.
\end{proof}
\end{enumerate}
This concludes the proof of Proposition  \ref{LSCP1}.
\end{proof}

%\begin{remarque}\label{RGOR}
%A side-effect of Lemma \ref{Lequiv} is that $da_t$ may be chosen equal
% to $d\left(-\Psi_t^{''}(0)\right)$.
%\end{remarque}

\bigskip
{\bf ACKNOWLEDGEMENTS:} The research of the third named author was partially 
supported by the ANR Project MASTERIE 2010 BLAN-0121-01. 

%%%%%%%%%%%%%%%%%%%%%%%%%%%%%%%%%%%%%%%%%
%BIBLIO
%%%%%%%%%%%%%%%%%%%%%%%%%%%%%%%%%%%%%%%%

\end{document}